\newtheorem{thm}{Theorem}[section]
\newtheorem{theorem}[thm]{Theorem}
\newtheorem{corollary}[thm]{Corollary}
\newtheorem{proposition}[thm]{Proposition}
\newtheorem{lemma}[thm]{Lemma}
\theoremstyle{definition}
\newtheorem{definition}[thm]{Definition}
\newtheorem{example}[thm]{Example}
\theoremstyle{remark}
\newtheorem{remark}[thm]{Remark}
\let\c@equation\c@thm
\numberwithin{equation}{section}
\title[Model Higgs bundles in exceptional components]{Model Higgs bundles in exceptional components of the $\text{Sp(4}\text{,}\mathbb{R}\text{)}$-character variety}
\author{Georgios Kydonakis}
\date{31 May 2021\\
2020 Mathematics subject classification: 53C07 (primary), 14H60, 58D27  (secondary)\\
Keywords: Higgs bundle, character variety, gluing construction, Hitchin equations, elliptic operator}
\begin{document}
\maketitle
\begin{abstract}
We establish a gluing construction for Higgs bundles over a connected sum of Riemann surfaces in terms of solutions to the $\text{Sp(4}\text{,}\mathbb{R}\text{)}$-Hitchin equations using the linearization of a relevant elliptic operator. The construction can be used to provide model Higgs bundles in all the $2g-3$ exceptional components of the maximal $\text{Sp(4}\text{,}\mathbb{R}\text{)}$-Higgs bundle moduli space, which correspond to components solely consisting of Zariski dense representations. This also allows a comparison between the invariants for maximal Higgs bundles and the topological invariants for Anosov representations constructed by O. Guichard and A. Wienhard.
\end{abstract}

\section{Introduction}
Let $\Sigma $ be a closed connected and oriented surface of genus $g\ge 2$ and $G$ be a connected semisimple Lie group. The moduli space of reductive representations of ${{\pi }_{1}}\left( \Sigma \right)$ into $G$ modulo conjugation \[\mathsf{\mathcal{R}}\left( G \right)=\text{Ho}{{\text{m}}^{+}}\left( {{\pi }_{1}}\left( \Sigma \right),G \right)/G\]
has been an object of extensive study and interest. Fixing a complex structure $J$ on the surface $\Sigma $ transforms this into a Riemann surface $X=(\Sigma,J)$ and opens the way for holomorphic techniques using the theory of Higgs bundles. The non-abelian Hodge theory correspondence provides a real-analytic isomorphism between the character variety $\mathsf{\mathcal{R}}\left( G \right)$ and the moduli space $\mathsf{\mathcal{M}}\left( G \right)$ of polystable $G$-Higgs bundles. The case when $G=\text{Sp(4}\text{,}\mathbb{R}\text{)}$ has received considerable attention by many authors, who studied the geometry and topology of the moduli space $\mathsf{\mathcal{M}}\left( \text{Sp(4}\text{,}\mathbb{R}\text{)} \right)$; see for instance \cite{BGGDeformations}, \cite{Collier}, \cite{GW}.  The subspace of maximal $\text{Sp(4}\text{,}\mathbb{R}\text{)}$-Higgs bundles ${{\mathsf{\mathcal{M}}}^{\max }}$, that is, the one containing Higgs bundles with extremal Toledo invariant, has been shown to have $3\cdot {{2}^{2g}}+2g-4$ connected components \cite{Gothen}.

Among the connected components of ${{\mathsf{\mathcal{M}}}^{\max }}$ there are $2g-3$ exceptional components of this moduli space. These components are all smooth but topologically non-trivial, and representations in these do not factor through any proper reductive subgroup of $\text{Sp}\left( 4,\mathbb{R} \right)$, thus have Zariski-dense image in $\text{Sp}\left( 4,\mathbb{R} \right)$. On the other hand, in the remaining $3\cdot {{2}^{2g}}-1$ components model Higgs bundles can be obtained by embedding stable $\text{SL(2}\text{,}\mathbb{R}\text{)}$-Higgs data into $\text{Sp(4}\text{,}\mathbb{R}\text{)}$ using appropriate embeddings $\phi :\text{SL(2}\text{,}\mathbb{R}\text{)}\hookrightarrow \text{Sp(4}\text{,}\mathbb{R}\text{)}$ (see \cite{BGGDeformations}). The construction of $\text{Sp(4}\text{,}\mathbb{R}\text{)}$-Higgs bundles that lie in the $2g-3$ exceptional components of the moduli space ${{\mathsf{\mathcal{M}}}^{\max }}$ is the principal objective in this article.

From the point of view of the character variety ${{\mathsf{\mathcal{R}}}^{\max }}$, model representations in a subfamily of the $2g-3$ special components have been effectively constructed by O. Guichard and A. Wienhard in \cite{GW} by amalgamating certain fundamental group representations defined over topological surfaces with one boundary component.

The first step in establishing a gluing construction from the holomorphic viewpoint is to describe holomorphic objects corresponding to $\text{Sp(4}\text{,}\mathbb{R}\text{)}$-representations over a surface with boundary with fixed arbitrary holonomy around the boundary. These objects are Higgs bundles defined over a Riemann surface with a divisor together with a weighted flag on the fibers over the points in the divisor, namely \emph{parabolic $\mathrm{Sp(4}\text{,}\mathbb{R}\text{)}$-Higgs bundles}. As in the non-parabolic case, a notion of maximality can still be defined for these objects.

It is important that a gluing construction for parabolic Higgs bundles over the complex connected sum ${{X}_{\#}}$ of two distinct compact Riemann surfaces $X_{1}$ and $X_{2}$ with a divisor of $s$-many distinct points on each, is formulated so that the gluing of stable parabolic pairs is providing a \emph{polystable} Higgs bundle over ${{X}_{\#}}$. Moreover, in order to construct new models in the components of $\mathsf{\mathcal{M}}\left( X_{\#}, \text{Sp(4}\text{,}\mathbb{R}\text{)} \right)$, the parabolic gluing data over $X_{1}$ and $X_{2}$ are chosen to be coming from different embeddings of $\text{SL(2}\text{,}\mathbb{R}\text{)}$-parabolic data into $\text{Sp(4}\text{,}\mathbb{R}\text{)}$, and so a priori do not agree over disks around the points in the divisors. We choose to switch to the language of solutions to Hitchin's equations and make use of the analytic techniques of C. Taubes for gluing instantons over 4-manifolds \cite{Taubes} in order to control the stability condition. These techniques have been applied to establish similar gluing constructions for solutions to gauge-theoretic equations, as for instance in \cite{DonKron}, \cite{Foscolo}, \cite{He}, \cite{Safari}.

The problem involves perturbing the initial data into model solutions which are identified locally over the annuli around the points in the divisors, thus allowing the construction of a pair over ${{X}_{\#}}$ that combines initial data over $X_{1}$ and $X_{2}$. The existence of these perturbations in terms of appropriate gauge transformations is provided for $\text{SL(2}\text{,}\mathbb{R}\text{)}$-data, and then we use the embeddings of $\text{SL(2}\text{,}\mathbb{R}\text{)}$ into $\text{Sp(4}\text{,}\mathbb{R}\text{)}$ to extend this deformation argument for our initial pairs. This produces an approximate solution to the $\text{Sp(4}\text{,}\mathbb{R}\text{)}$-Hitchin equations $\left( A_{R}^{app},\Phi _{R}^{app} \right)$ over ${{X}_{\#}}$, with respect to a parameter $R>0$ which describes the size of the neck region in the construction of ${{X}_{\#}}$. The pair $\left( A_{R}^{app},\Phi _{R}^{app} \right)$ coincides with the initial data over each hand side Riemann surface and with the model solution over the neck region.

The next step is to correct this approximate solution to an exact solution of the $\text{Sp(4}\text{,}\mathbb{R}\text{)}$-Hitchin equations over the complex connected sum of Riemann surfaces. In other words, we seek for a complex gauge transformation $g$ such that ${{g}^{*}}\left( A_{R}^{app},\Phi _{R}^{app} \right)$ is an exact solution of the $\text{Sp(4}\text{,}\mathbb{R}\text{)}$-Hitchin equations. The argument providing the existence of such a gauge is translated into a Banach fixed point theorem argument and involves the study of the linearization of a relevant elliptic operator. For Higgs bundles this was first studied by R. Mazzeo, J. Swoboda, H. Weiss and F. Witt in \cite{MSWW}, who described solutions to the $\text{SL}(2,\mathbb{C})$-Hitchin equations near the ends of the moduli space. A crucial step in this argument is to show that the linearization of the $G$-Hitchin operator at our approximate solution $\left( A_{R}^{app},\Phi _{R}^{app} \right)$ is invertible; this is obtained by showing that an appropriate self-adjoint Dirac-type operator has no small eigenvalues. The method is also used by J. Swoboda in \cite{Swoboda} to produce a family of smooth solutions of the $\text{SL(2}\text{,}\mathbb{C}\text{)}$-Hitchin equations, which may be viewed as desingularizing a solution with logarithmic singularities over a nodal Riemann surface. The analytic techniques from \cite{Swoboda} are extended to provide the main theorem from that article for solutions to the $\text{Sp(4}\text{,}\mathbb{R}\text{)}$-Hitchin equations as well, and moreover to obtain our main result:

\begin{theorem}[Theorem \ref{main_theorem}]
Let $X_{1}$ be a closed Riemann surface of genus $g_{1}$ and ${{D}_{1}}=\left\{ {{p}_{1}},\ldots ,{{p}_{s}} \right\}$ be a collection of $s$-many distinct points on $X_{1}$. Consider respectively a closed Riemann surface $X_{2}$ of genus $g_{2}$ and a collection of also $s$-many distinct points ${{D}_{2}}=\left\{ {{q}_{1}},\ldots ,{{q}_{s}} \right\}$ on $X_{2}$. Let $\left( {{E}_{1}},{{\Phi }_{1}} \right)\to {{X}_{1}}$ and $\left( {{E}_{2}},{{\Phi }_{2}} \right)\to {{X}_{2}}$ be parabolic polystable $\mathrm{Sp(4}\text{,}\mathbb{R}\text{)}$-Higgs bundles with corresponding solutions to the Hitchin equations $\left( {{A}_{1}},{{\Phi }_{1}} \right)$ and $\left( {{A}_{2}},{{\Phi }_{2}} \right)$. Assume that these solutions agree with model solutions $\left( A_{1,{{p}_{i}}}^{\bmod },\Phi _{1,{{p}_{i}}}^{\bmod } \right)$ and $\left( A_{2,{{q}_{j}}}^{\bmod },\Phi _{2,{{q}_{j}}}^{\bmod } \right)$ near the points ${{p}_{i}}\in {{D}_{1}}$ and ${{q}_{j}}\in {{D}_{2}}$, and that the model solutions satisfy $\left( A_{1,{{p}_{i}}}^{\bmod },\Phi _{1,{{p}_{i}}}^{\bmod } \right)=-\left( A_{2,{{q}_{j}}}^{\bmod },\Phi _{2,{{q}_{j}}}^{\bmod } \right)$, for $s$-many possible pairs of points $\left( {{p}_{i}},{{q}_{j}} \right)$. Then there is a polystable $\mathrm{Sp(4}\text{,}\mathbb{R}\text{)}$-Higgs bundle $\left( E_{\#},\Phi_{\#}  \right)\to {{X}_{\#}}$ over the connected sum of Riemann surfaces ${{X}_{\#}}={{X}_{1}}\#{{X}_{2}}$ of genus $g_{1}+g_{2}+s-1$, which agrees with the initial data over ${{X}_{\#}}\backslash {{X}_{1}}$ and ${{X}_{\#}}\backslash {{X}_{2}}$.
\end{theorem}

In analogy with the terminology introduced by O. Guichard and A. Wienhard in their construction of hybrid representations, we call the polystable Higgs bundles corresponding to such exact solutions \emph{hybrid}. The construction can have wider applicability in obtaining particular points in moduli spaces of polystable $G$-Higgs bundles. As an application, we construct here Higgs bundles corresponding to Zariski dense representations into $\text{Sp(4}\text{,}\mathbb{R}\text{)}$. For this purpose, we look at how the Higgs bundle topological invariants behave under the complex connected sum operation. We first show the following:

\begin{proposition}[Proposition \ref{additivity_pardeg}]
Let ${{X}_{\#}}=X_{1}\#X_{2}$ be the complex connected sum of two closed Riemann surfaces $X_{1}$ and $X_{2}$ with divisors $D_{1}$ and $D_{2}$ of $s$-many distinct points on each surface, and let ${{V}_{1}},{{V}_{2}}$ be parabolic principal ${{H}^{\mathbb{C}}}$-bundles over $X_{1}$ and $X_{2}$ respectively. Fix an antidominant character $\chi$ of $\mathrm{Lie} \left(P \right)$ and let $\sigma_{1}$, $\sigma_{2}$ be holomorphic reductions of the structure group of $V_{1}$, $V_{2}$ respectively from $H^{\mathbb{C}}$ to $P$. Assuming that the parabolic bundles $V_{1}$ and $V_{2}$ are glued to a bundle 
${{V}_{1}}\#{{V}_{2}}$, denote by $\sigma_{\#}$ the holomorphic reduction of the structure group of ${{V}_{1}}\#{{V}_{2}} $  from ${{H}^{\mathbb{C}}}$ to $P$ induced by $\sigma_{1}$ and $\sigma_{2}$. Then, the following identity holds:
\[\deg \left( {{V}_{1}}\#{{V}_{2}} \right)\left( \sigma_{\#} ,\chi  \right)=par{{\deg }_{{{\alpha }_{1}}}}\left( {{V}_{1}} \right)\left( \sigma_{1} ,\chi  \right)+par{{\deg }_{{{\alpha }_{2}}}}\left( {{V}_{2}} \right)\left( \sigma_{2} ,\chi  \right).\]
\end{proposition}
\vspace{2mm}
Note that an analogous additivity property for the Toledo invariant was established by M. Burger, A. Iozzi and A. Wienhard in \cite{BIW} from the point of view of fundamental group representations. It implies in particular that the connected sum of maximal parabolic $G$-Higgs bundles is again a maximal (non-parabolic) $G$-Higgs bundle. This property provides, however, a useful tool in order to construct models in components of Higgs bundle moduli spaces that are not necessarily maximal. 

We find model Higgs bundles in \emph{all} exceptional components of the maximal $\text{Sp(4}\text{,}\mathbb{R}\text{)}$-Higgs bundle moduli space; these models are described by hybrid Higgs bundles. In the case when $G=\text{Sp(4}\text{,}\mathbb{R}\text{)}$, considering all possible decompositions of a surface $\Sigma$ along a simple, closed, separating geodesic is sufficient in order to obtain representations in the desired components of ${{\mathsf{\mathcal{M}}}^{\max }}$, which are fully distinguished by the calculation of the degree of a line bundle. This degree can be identified with the Euler class for a hybrid representation as defined by O. Guichard and A. Wienhard, although these invariants live naturally in different cohomology groups.

The content of the article is described next. Sections 2 and 3 include the necessary definitions on $\text{Sp}\left( 4,\mathbb{R} \right)$-Higgs bundles and parabolic  $\text{Sp}\left( 4,\mathbb{R} \right)$-Higgs bundles respectively. We provide the set-up on which our gluing construction will be developed and no new results are included here. Sections 4, 5 and 6 contain the analytic machinery for the establishment of a general gluing construction for parabolic $G$-Higgs bundles over a complex connected sum of Riemann surfaces, while in Section 7 we are combining the arguments from these three sections to derive our main theorems. The final Section 8 deals with the question of obtaining models in the desired exceptional components of the $\text{Sp}\left( 4,\mathbb{R} \right)$-Higgs bundle moduli space. Moreover, we include here the discussion on the comparison between the invariants for maximal Higgs bundles and the topological invariants for Anosov representations constructed by O. Guichard and A. Wienhard. 

\vspace{2mm}
\textbf{Acknowledgments}.
I am indebted to an anonymous referee for a very careful reading of the original manuscript and a series of constructive comments and corrections which have improved the article. This work was part of the author's requirements for the Ph. D. degree at the University of Illinois at Urbana-Champaign. I am particularly grateful to my doctorate advisor, Professor Steven Bradlow, for his continuous support and guidance towards the completion of this project, Indranil Biswas, Olivier Guichard, Jan Swoboda, Nicolaus Treib, Hartmut Weiss and Richard Wentworth for shared insights, and Evgenia Kydonaki for helping me with drawing the figures included in this article. A very special thanks to Rafe Mazzeo for a series of illuminating discussions and a wonderful hospitality during a visit to Stanford University in April 2016. The author acknowledges support from U.S. National Science Foundation grants DMS 1107452, 1107263, 1107367 ``RNMS: GEometric structures And Representation varieties'' (the GEAR Network).
\vspace{2mm}

\section{$\text{Sp(}4,\mathbb{R}\text{)}$-Higgs bundles and surface group representations}

\subsection{Non-abelian Hodge theory}

Let $X$ be a compact Riemann surface and let $G$ be a real reductive group. The latter involves considering \emph{Cartan data} $\left( G,H,\theta ,B \right)$, where $H\subset G$ is a maximal compact subgroup, $\theta :\mathfrak{g}\to \mathfrak{g}$ is a Cartan involution and $B$ is a non-degenerate bilinear form on $\mathfrak{g}$, which is $\text{Ad}\left( G \right)$-invariant and $\theta $-invariant. The Cartan involution $\theta$ gives a decomposition (called the \emph{Cartan decomposition})
  \[\mathfrak{g}=\mathfrak{h}\oplus \mathfrak{m}\]
into its $\pm 1$-eigenspaces, where $\mathfrak{h}$ is the Lie algebra of $H$. Consider ${{\mathfrak{g}}^{\mathbb{C}}}={{\mathfrak{h}}^{\mathbb{C}}}\oplus {{\mathfrak{m}}^{\mathbb{C}}}$ the complexification of the Cartan decomposition. 

\begin{definition}
Let $K$ be the canonical line bundle over a compact Riemann surface $X$. A \emph{$G$-Higgs bundle} is a pair $\left( E,\varphi  \right)$ where
\begin{itemize}
  \item $E$ is a principal holomorphic ${{H}^{\mathbb{C}}}$-bundle over $X$ and
  \item $\varphi $ is a holomorphic section of the vector bundle $E\left( {{\mathfrak{m}}^{\mathbb{C}}} \right)\otimes K=\left( E{{\times }_{\iota }}{{\mathfrak{m}}^{\mathbb{C}}} \right)\otimes K$,
\end{itemize}
where  $\iota :{{H}^{\mathbb{C}}}\to \text{GL(}{{\mathfrak{m}}^{\mathbb{C}}}\text{)}$ is the complexified isotropy representation.\\
The section $\varphi $ is called the \emph{Higgs field}. Two $G$-Higgs bundles $\left( E,\varphi  \right)$ and $\left( {E}',{\varphi }' \right)$ are said to be \emph{isomorphic} if there is a principal bundle isomorphism $E\cong {E}'$ which takes the induced $\varphi $ to ${\varphi }'$ under the induced isomorphism $E\left( {{\mathfrak{m}}^{\mathbb{C}}} \right)\cong {E}'\left( {{\mathfrak{m}}^{\mathbb{C}}} \right)$.
\end{definition}

To define a moduli space of $G$-Higgs bundles we need to consider a notion of semistability, stability and polystability. These notions are defined in terms of an antidominant character for a parabolic subgroup ${{P}}\subseteq {{H}^{\mathbb{C}}}$ and a holomorphic reduction $\sigma $ of the structure group of the bundle $E$ from ${{H}^{\mathbb{C}}}$ to ${{P}}$ (see \cite{GGMHitchin-Kob} for the precise definitions). When the group $G$ is connected, principal ${{H}^{\mathbb{C}}}$-bundles $E$ are topologically classified by a characteristic class  $c\left( E \right)\in {{H}^{2}}\left( X,{{\pi }_{1}}\left( {{H}^{\mathbb{C}}} \right) \right)\cong {{\pi }_{1}}\left( {{H}^{\mathbb{C}}} \right) \cong {{\pi }_{1}}\left( H \right) \cong {{\pi }_{1}}\left( G \right)$.

\begin{definition}
For a fixed class $d\in {{\pi }_{1}}\left( G \right)$, the \emph{moduli space of polystable $G$-Higgs bundles} with respect to the group of complex gauge transformations is defined as the set of isomorphism classes of polystable $G$-Higgs bundles $\left( E,\varphi  \right)$ such that $c\left( E \right)=d$. We will denote this set by ${{\mathsf{\mathcal{M}}}_{d}}\left( G \right)$.
\end{definition}

Let $\left( E,\varphi  \right)$ be a $G$-Higgs bundle over a compact Riemann surface $X$. By a slight abuse of notation we shall denote the underlying smooth objects of $E$ and $\varphi $ by the same symbols. The Higgs field can be thus viewed as a $\left( 1,0 \right)$-form $\varphi \in {{\Omega }^{1,0}}\left( E\left( {{\mathfrak{m}}^{\mathbb{C}}} \right) \right)$. Given a reduction $h$ of structure group to $H$ in the smooth ${{H}^{\mathbb{C}}}$-bundle $E$, we denote by ${{F}_{h}}$ the curvature of the unique connection compatible with $h$ and the holomorphic structure on $E$. Let ${{\tau }_{h}}:{{\Omega }^{1,0}}\left( E\left( {{\mathfrak{g}}^{\mathbb{C}}} \right) \right)\to {{\Omega }^{0,1}}\left( E\left( {{\mathfrak{g}}^{\mathbb{C}}} \right) \right)$ be defined by the compact conjugation of ${{\mathfrak{g}}^{\mathbb{C}}}$ which is given fiberwise by the reduction $h$, combined with complex conjugation on complex 1-forms. The next theorem was proved in \cite{GGMHitchin-Kob} for an arbitrary reductive real Lie group $G$.

\begin{theorem}[Theorem 3.21 in \cite{GGMHitchin-Kob}] There exists a reduction $h$ of the structure group of $E$ from ${{H}^{\mathbb{C}}}$ to $H$ satisfying the Hitchin equation
	\[{{F}_{h}}-\left[ \varphi ,{{\tau }_{h}}\left( \varphi  \right) \right]=0\]
if and only if $\left( E,\varphi  \right)$ is polystable.
\end{theorem}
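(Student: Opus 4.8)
The plan is to prove the two implications separately, following the Donaldson--Uhlenbeck--Yau strategy as adapted to Higgs pairs by Hitchin and Simpson and extended to an arbitrary reductive real group through the moment-map formalism. The guiding principle is that the Hitchin equation $F_{h}-\left[\varphi,\tau_{h}(\varphi)\right]=0$ is precisely the vanishing of the moment map for the action of the gauge group of $H$ on the configuration space of pairs consisting of a holomorphic structure on $E$ together with a Higgs field $\varphi$. The statement is therefore an infinite-dimensional instance of the Kempf--Ness principle relating a symplectic (moment-map) quotient to a GIT stability condition, and the cleanest conceptual route is to recognize it as a special case of the general Hitchin--Kobayashi correspondence of Mundet i Riera; below I indicate the concrete analytic steps behind that correspondence.

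For the implication that a solution forces polystability I would argue by Chern--Weil. Fix a reduction $h$ solving the equation, a parabolic subgroup $P_{A}\subseteq H^{\mathbb{C}}$, a holomorphic reduction $\sigma$ to $P_{A}$, and an antidominant character $\chi$. The degree $\deg(E)(\sigma,\chi)$ can be written as an integral over $X$ pairing $\chi$ with the curvature that $h$ induces on the $P_{A}$-reduction, together with a contribution of definite sign coming from the second fundamental form of $\sigma$. Substituting $F_{h}=\left[\varphi,\tau_{h}(\varphi)\right]$ and using that $\varphi$ preserves the filtration determined by $\sigma$, so that the remaining Higgs contribution also carries the sign demanded by stability, yields the semistability inequality $\deg(E)(\sigma,\chi)\ge 0$. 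Tracking the equality case, in which both the second fundamental form and the off-filtration components of $\varphi$ must vanish, produces a reduction splitting $(E,\varphi)$ as a direct sum, which is exactly \emph{polystability}.

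For the converse I would fix a smooth background reduction $h_{0}$ and write an arbitrary reduction as $h=h_{0}\,e^{s}$, where $s$ is a section of the adjoint bundle $E(\sqrt{-1}\,\mathfrak{h})$ that is self-adjoint with respect to $h_{0}$. I would introduce the Donaldson-type functional on this space of reductions whose first variation is the left-hand side of the Hitchin equation, so that its critical points are exactly the solutions, and then run the associated negative gradient (heat) flow starting from $h_{0}$. Short-time existence is standard parabolic theory, so the substance lies in long-time existence and convergence. The essential a priori input is a $C^{0}$ bound on $s$, obtained from a maximum principle applied to $|s|_{h_{0}}$ by way of a Weitzenb\"ock formula in which the Higgs-field term $\left[\varphi,\tau_{h}(\varphi)\right]$ enters with a favorable sign.

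The step I expect to be the main obstacle is the use of the stability hypothesis to force convergence rather than degeneration of the flow. If the flow did not converge, the eigenvalues of $s$ would diverge along a subsequence, and an Uhlenbeck-type compactness argument would extract a weakly holomorphic reduction of the structure group to some parabolic $P_{A}\subseteq H^{\mathbb{C}}$. Two delicate points then arise: first, one must show this limiting reduction is preserved by the Higgs field $\varphi$, so that it is a genuine destabilizing object for the $G$-Higgs bundle and not merely for the underlying $H^{\mathbb{C}}$-bundle; second, one must establish enough regularity, via a Sacks--Uhlenbeck/Simpson removable-singularity argument, to legitimately compute its degree. Once both are in place, the sign of the maximal weight in the degenerating direction contradicts the assumed polystability inequality, forcing convergence and hence producing the desired solution $h$. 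Throughout, the passage from the vector-bundle setting of Simpson to an arbitrary reductive real $G$ amounts to replacing destabilizing subsheaves by reductions to parabolic subgroups $P_{A}\subseteq H^{\mathbb{C}}$ and encoding all weights through antidominant characters $\chi$, which is precisely the bookkeeping carried by the maximal-weight formalism.
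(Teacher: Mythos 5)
The paper does not actually prove this theorem: it is stated as a quoted result, with the proof attributed to Garc\'{i}a-Prada, Gothen and Mundet i Riera \cite{GGMHitchin-Kob}, so there is no in-paper argument to compare yours against line by line. Your outline is nonetheless the correct and standard strategy, and it matches the architecture of the cited proof: the ``solution implies polystable'' direction via a Chern--Weil/maximal-weight computation for a holomorphic reduction $\sigma$ to a parabolic $P_{A}\subseteq H^{\mathbb{C}}$ paired with an antidominant character $\chi$, and the converse via an existence argument over the complex gauge orbit. One genuine difference of route: \cite{GGMHitchin-Kob}, following Mundet i Riera's Hitchin--Kobayashi correspondence for twisted pairs, does not run the Donaldson heat flow; it minimizes the integral of the moment map directly, the key analytic input being (linear) properness of that functional under stability, whereas your flow-plus-Uhlenbeck-compactness scheme is Simpson's vector-bundle route. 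Both work; the minimization route handles an arbitrary reductive real $G$ and the twisting uniformly without the Weitzenb\"ock sign analysis, while the flow route produces a more concrete limiting object when convergence fails. Two points you should be careful with if you flesh this out: first, (semi/poly)stability for $G$-Higgs bundles only quantifies over reductions $\sigma$ for which $\varphi$ takes values in $E_{\sigma}\left( \mathfrak{m}_{s} \right)\otimes K$, so the Chern--Weil inequality must be derived for precisely that class (your phrase ``$\varphi$ preserves the filtration'' is doing real work there); second, the two ``delicate points'' you name in the converse, namely $\varphi$-invariance of the limiting parabolic reduction and the regularity needed to compute its degree, are exactly the substance of the cited proof, so as written your proposal is a correct strategy sketch rather than a complete argument.
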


A solution to the Hitchin equation corresponds to a reductive fundamental group representation $\rho :{{\pi }_{1}}\left( \Sigma  \right)\to G$, where $\Sigma$ is the closed oriented topological surface underlying $X$. This is seen using that any solution $h$ to Hitchin's equations defines a flat reductive $G$-connection
\begin{equation}\label{flat_reductive}
  D={{D}_{h}}+\varphi -\tau \left( \varphi  \right),
\end{equation} where ${{D}_{h}}$ is the unique $H$-connection on $E$ compatible with its holomorphic structure. Conversely, given a flat reductive connection $D$ on a $G$-bundle $E_{G}$, there exists a harmonic metric, in other words, a reduction of structure group to $H\subset G$ corresponding to a harmonic section of ${{{E}_{G}}}/{H}\;\to X$. This reduction produces a solution to Hitchin's equations such that Equation (\ref{flat_reductive}) holds. 

For a closed oriented topological surface $\Sigma$ of genus $g$, define the \emph{moduli space of reductive representations of ${{\pi }_{1}}\left( \Sigma  \right)$ into $G$} to be the orbit space
	\[\mathsf{\mathcal{R}}\left( G \right)={\text{Ho}{{\text{m}}^{\text{red}}}\left( {{\pi }_{1}}\left( \Sigma  \right),G \right)}/{G}\;.\]
This space has a stratification by real analytic varieties indexed by the stabilizers of representations (see \cite{Goldman}) and so $\mathsf{\mathcal{R}}\left( G \right)$ is usually called the \emph{character variety}. We can assign a topological invariant to a representation $\rho \in \mathsf{\mathcal{R}}\left( G \right)$ by considering its corresponding flat $G$-bundle on $\Sigma$, ${{E}_{\rho }}=\tilde{\Sigma}{{\times }_{\rho }}G$, as the characteristic class $c\left( \rho  \right):=c\left( {{E}_{\rho }} \right)\in {{\pi }_{1}}\left( G \right)\simeq {{\pi }_{1}}\left( H \right)$, for $H\subseteq G$ a maximal compact subgroup of $G$.

In summary, equipping the surface $\Sigma$ with a complex structure $J$, a reductive representation of ${{\pi }_{1}}\left( \Sigma  \right)$ into $G$ corresponds to a polystable $G$-Higgs bundle over the Riemann surface $X=\left( \Sigma, J \right)$; this is the content of the \textit{non-abelian Hodge correspondence}; its proof is based on combined work by N. Hitchin \cite{Hit87}, C. Simpson \cite{Simpson-variations}, \cite{Simpson-Higgs}, S. Donaldson \cite{Donaldson} and K. Corlette \cite{Corlette}:
\begin{theorem}[Non-abelian Hodge correspondence]
Let $G$ be a connected semisimple real Lie group with maximal compact subgroup $H\subseteq G$ and let $d\in {{\pi }_{1}}\left( G \right)\simeq {{\pi }_{1}}\left( H \right)$. Then there exists a homeomorphism
	\[{{\mathsf{\mathcal{R}}}_{d}}\left( G \right)\cong {{\mathsf{\mathcal{M}}}_{d}}\left( G \right),\]
where ${{\mathsf{\mathcal{R}}}_{d}}\left( G \right), {{\mathsf{\mathcal{M}}}_{d}}\left( G \right)$ denote the subvarieties of points with fixed topological invariant $d$.
\end{theorem}

\subsection{$\text{Sp(}4\text{,}\mathbb{R}\text{)}$-Higgs bundles} In this article, we are particularly interested in the case when the group $G$ is the semisimple real subgroup of $\text{SL(4}\text{,}\mathbb{R}\text{)}$ that preserves a symplectic form on ${{\mathbb{R}}^{4}}$:
	\[\text{Sp(4}\text{,}\mathbb{R}\text{)}=\left\{ A\in \text{SL(4}\text{,}\mathbb{R}\text{)}\left| {{A}^{T}}{{J}}A={{J}} \right. \right\},\]
where ${{J}}=\left( \begin{matrix}
   0 & {{I}_{2}}  \\
   -{{I}_{2}} & 0  \\
\end{matrix} \right)$ defines a symplectic form on ${{\mathbb{R}}^{4}}$, for ${{I}_{2}}$ the $2\times 2$ identity matrix.

The Cartan involution $\theta :\mathfrak{sp}\left( 4,\mathbb{C} \right)\to \mathfrak{sp}\left( 4,\mathbb{C} \right)$ with $\theta \left( X \right)=-{{X}^{T}}$ determines a Cartan decomposition for a choice of maximal compact subgroup  $H\simeq \text{U}\left( 2 \right)\subset \text{Sp(4}\text{,}\mathbb{R}\text{)}$ as
	\[\mathfrak{sp}\left( 4,\mathbb{R} \right)=\mathfrak{u}\left( 2 \right)\oplus \mathfrak{m}\]
with complexification $\mathfrak{sp}\left( 4,\mathbb{C} \right)=\mathfrak{gl}\left( 2,\mathbb{C} \right)\oplus {{\mathfrak{m}}^{\mathbb{C}}}$. It is shown in \cite{GGMsymplectic} that the general definition for a $G$-Higgs bundle specializes to the following:

\begin{definition}
An \emph{$\mathrm{Sp(4}\text{,}\mathbb{R}\text{)}$-Higgs bundle} over a compact Riemann surface $X$ is defined by a triple $\left( V,\beta ,\gamma  \right)$, where $V$ is a rank $2$ holomorphic vector bundle over $X$ and $\beta , \gamma$ are symmetric homomorphisms $\beta :{{V}^{*}}\to V\otimes K$  and  $\gamma :V\to {{V}^{*}}\otimes K$,
where $K$ is the canonical line bundle over $X$.
\end{definition}

The embedding $\text{Sp(4}\text{,}\mathbb{R}\text{)}\hookrightarrow \text{SL(4}\text{,}\mathbb{C}\text{)}$ allows one to reinterpret the defining $\text{Sp(4}\text{,}\mathbb{R}\text{)}$-Higgs bundle data as special $\text{SL(4}\text{,}\mathbb{C}\text{)}$-data in the original sense of N. Hitchin \cite{Hit87}. In particular, an $\text{Sp(4}\text{,}\mathbb{R}\text{)}$-Higgs bundle is alternatively defined as a pair $\left( E,\Phi  \right)$, where 
\begin{enumerate}
\item $E=V\oplus {{V}^{*}}$ is a rank $4$ holomorphic vector bundle over $X$ and
\item $\Phi :E\to E\otimes K$ is a holomorphic $K$-valued endomorphism of $E$ with $\Phi =\left( \begin{matrix}
   0 & \beta   \\
   \gamma  & 0  \\
\end{matrix} \right)$,
\end{enumerate}
for $V$, $\beta$, $\gamma$ as above.

\subsection{$\text{Sp(4}\text{,}\mathbb{R}\text{)}$-Hitchin equations}
For the complexified Lie algebra $\mathfrak{sp}\left( 4,\mathbb{C} \right)$, notice that the involution $\sigma :\mathfrak{sp}\left( 4,\mathbb{C} \right)\to \mathfrak{sp}\left( 4,\mathbb{C} \right)$, $\sigma \left( X \right)=\bar{X}$ defines the split real form
\[\mathfrak{sp}\left( 4,\mathbb{R} \right) = \left\{ X\in \mathfrak{sp}\left( 4,\mathbb{C} \right)\left| \sigma \left( X \right)=X \right. \right\},\]
while the involution $\tau :\mathfrak{sp}\left( 4,\mathbb{C} \right)\to \mathfrak{sp}\left( 4,\mathbb{C} \right)$, $\tau \left( X \right)=-{{X}^{*}}$ defines the compact real form
\[\mathfrak{sp}\left( 2 \right) = \mathfrak{sp}\left( 4,\mathbb{C} \right)\cap \mathfrak{u}\left( 4 \right)= \left\{ X\in \mathfrak{sp}\left( 4,\mathbb{C} \right)\left| \tau \left( X \right)=X \right. \right\}.\]
Since $\tau $ and the Cartan involution commute, we have $\tau \left( {{\mathfrak{m}}^{\mathbb{C}}} \right)\subseteq {{\mathfrak{m}}^{\mathbb{C}}}$ and then $\tau $ preserves the Cartan decomposition $\mathfrak{sp}\left( 4,\mathbb{C} \right)=\mathfrak{gl}\left( 2,\mathbb{C} \right)\oplus {{\mathfrak{m}}^{\mathbb{C}}}$. Thus, there is an induced real form on $E\left( {{\mathfrak{m}}^{\mathbb{C}}} \right)$ which we shall call $\tau $ as well for simplicity. Now, it makes sense to apply $\tau $ on a section $\Phi \in {{\Omega }^{1,0}}\left( E\left( {{\mathfrak{m}}^{\mathbb{C}}} \right) \right)$.

Moreover, for $\Phi =\left( \begin{matrix}
   0 & \beta   \\
   \gamma  & 0  \\
\end{matrix} \right)$ we check that
\[-\left[ \Phi ,\tau \left( \Phi  \right) \right]=\left[ \Phi ,{{\Phi }^{*}} \right]=\left( \begin{matrix}
   \beta \bar{\beta }-\bar{\gamma }\gamma  & 0  \\
   0 & \gamma \bar{\gamma }-\bar{\beta }\beta   \\
\end{matrix} \right).\]

Thus, the general $G$-Hitchin equations when $G=\text{Sp(4}\text{,}\mathbb{R}\text{)}$ can be described in terms of the special  $\text{SL(4}\text{,}\mathbb{C}\text{)}$-data $\left( E=V\oplus {{V}^{*}},\Phi =\left( \begin{matrix}
   0 & \beta   \\
   \gamma  & 0  \\
\end{matrix} \right)  \right)$ as 

\begin{align*}
   F_{A}-\left[ \Phi ,\tau \left( \Phi  \right) \right] & = 0  \\
   {{{\bar{\partial }}}_{A}}\Phi & = 0,
\end{align*}
where $ F_{A}$ is the curvature of a connection on $E \to X$ and ${{\bar{\partial }}_{A}}$ is the anti-holomorphic covariant derivative induced by $A$.

Recall that a $\text{GL(4}\text{,}\mathbb{C}\text{)}$-Higgs bundle $\left( E,\Phi  \right)$ is called \emph{stable} if any proper non-zero $\Phi$-invariant subbundle $F\subseteq E$ satisfies $\mu \left( F \right)<\mu \left( E \right)$, for $\mu \left( F \right)={\deg \left( F \right)}/{\text{rk}\left( F \right)}\;$, the slope of the bundle. One has the following proposition:

\begin{proposition}[Theorem 3.26 in \cite{GGMsymplectic}] An $\mathrm{Sp(4}\text{,}\mathbb{R}\text{)}$-Higgs bundle $\left( V,\beta ,\gamma  \right)$ is polystable if and only if the $\mathrm{GL(4}\text{,}\mathbb{C}\text{)}$-Higgs bundle $\left( E = V\oplus {{V}^{*}},\Phi =\left( \begin{matrix}
   0 & \beta   \\
   \gamma  & 0  \\
\end{matrix} \right) \right)$ is polystable. Moreover, even though the polystability conditions coincide, the stability condition for an $\mathrm{Sp(4}\text{,}\mathbb{R}\text{)}$-Higgs bundle is in general weaker than the stability condition for the corresponding $\mathrm{GL(4}\text{,}\mathbb{C}\text{)}$-Higgs bundle.
\end{proposition}

\subsection{Toledo invariant and Cayley partner}\label{Toledo_inv}
A basic topological invariant for an $\text{Sp}\left( 4,\mathbb{R} \right)$-Higgs bundle $\left( V,\beta ,\gamma  \right)$ is given by the degree of the underlying bundle \[d=\deg \left( V \right).\]
This invariant, called the \emph{Toledo invariant}, labels only partially the connected components of the moduli space $\mathsf{\mathcal{M}}\left( \text{Sp(}4,\mathbb{R}\text{)} \right)$. We use the notation ${{\mathsf{\mathcal{M}}}_{d}}={{\mathsf{\mathcal{M}}}_{d}}\left( \text{Sp}(4,\mathbb{R}) \right)$ to denote the moduli space parameterizing isomorphism classes of polystable $\text{Sp}\left( 4,\mathbb{R} \right)$-Higgs bundles with $\deg \left( V \right)=d$. The sharp bound below for the Toledo invariant when $G=\text{Sp}\left( 4,\mathbb{R} \right)$ was first given by V. Turaev \cite{Turaev}:

\begin{proposition}[Milnor-Wood inequality]\label{Milnor_Wood_classical}
Let $\left( V,\beta ,\gamma  \right)$ be a semistable $\mathrm{Sp}\left( 4,\mathbb{R} \right)$-Higgs bundle. Then $\left| d \right|\le 2g-2$.
\end{proposition}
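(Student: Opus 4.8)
The plan is to bound the Toledo invariant $d=\deg V$ using the holomorphic Higgs field components $\beta\in H^0(S^2V\otimes K)$ and $\gamma\in H^0(S^2V^*\otimes K)$ together with the semistability hypothesis. First I would reduce to proving only the upper bound $d\le 2g-2$: the assignment $(V,\beta,\gamma)\mapsto(V^*,\gamma,\beta)$ sends a semistable $\text{Sp}(4,\mathbb{R})$-Higgs bundle to a semistable one (both yield the same associated rank-$4$ Higgs bundle on $V\oplus V^*$, with the two Lagrangian factors interchanged) and negates the Toledo invariant. Hence the lower bound $d\ge-(2g-2)$ is just the upper bound applied to the partner, and $|d|\le 2g-2$ follows.

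For the upper bound I would analyze $\gamma:V\to V^*\otimes K$ via its determinant. Taking top exterior powers gives $\det\gamma\in H^0\bigl(X,(\det V)^{-2}\otimes K^2\bigr)$, since $\det(V^*\otimes K)=(\det V)^{-1}\otimes K^2$ for $V$ of rank $2$. If $\det\gamma\not\equiv 0$, this line bundle carries a nonzero section, so $\deg\bigl((\det V)^{-2}K^2\bigr)=-2d+2(2g-2)\ge 0$, yielding $d\le 2g-2$ at once.

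The remaining case $\det\gamma\equiv 0$ is where semistability enters. If $\gamma\equiv 0$, then $V\subseteq V\oplus V^*$ is an isotropic subbundle annihilated by the Higgs field, hence $\Phi$-invariant, and semistability of the associated $\text{Sp}(4,\mathbb{C})$-Higgs bundle $(V\oplus V^*,\Phi)$ forces $\deg V\le 0\le 2g-2$. If instead $\gamma\neq 0$ has generic rank $1$, let $N\subseteq V$ be the saturation of $\ker\gamma$, a line subbundle. Then $N$ is isotropic (it lies in the Lagrangian $V$) and $\Phi(N)=\gamma(N)=0$, so semistability gives $\deg N\le 0$. Here I would exploit the symmetry of $\gamma$: viewing it as a $K$-valued quadratic form, any $v\in N$ satisfies $\langle\gamma(w),v\rangle=\langle\gamma(v),w\rangle=0$ for all $w$, so $\operatorname{im}\gamma\subseteq(V/N)^*\otimes K$ and $\gamma$ descends to a nonzero map $\bar\gamma:V/N\to(V/N)^*\otimes K$. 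This exhibits a nonzero section of $(V/N)^{-2}\otimes K$, whence $\deg(V/N)\le g-1$, and therefore $d=\deg N+\deg(V/N)\le g-1\le 2g-2$.

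The main obstacle is precisely this degenerate case $\det\gamma\equiv 0$ with $\gamma\neq 0$: there $V$ itself is not $\Phi$-invariant, so one cannot bound $\deg V$ directly and must split it as $\deg N+\deg(V/N)$, controlling the two summands by different means — the sub-line-bundle $N$ by semistability (after verifying that it is isotropic and $\Phi$-invariant) and the quotient $V/N$ by the symmetry of $\gamma$. Care is also needed in passing from the kernel and image \emph{sheaves} of $\gamma$ to genuine subbundles by saturation, and in invoking the semistability condition in the usable form ``every $\Phi$-invariant isotropic subbundle has non-positive degree,'' which is the specialization of the general antidominant-character definition to holomorphic reductions to the maximal parabolic subgroups of $\text{GL}(2,\mathbb{C})$.
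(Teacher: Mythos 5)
Your proof is correct. Note that the paper itself contains no proof of this proposition: it attributes the bound to V. Turaev \cite{Turaev} and defers the Higgs-bundle argument to P. Gothen \cite{Gothen}, so there is no internal argument to compare against; what you have written is a sound, self-contained reconstruction of precisely that Higgs-theoretic route. The duality $(V,\beta,\gamma)\mapsto(V^{*},\gamma,\beta)$ legitimately reduces everything to the upper bound, since the associated rank-$4$ Higgs bundle $\left(V\oplus V^{*},\Phi\right)$ is unchanged up to swapping the two Lagrangian summands, so semistability transfers while the Toledo invariant changes sign. The case $\det\gamma\not\equiv 0$ gives $d\le 2g-2$ from the nonzero section of $(\det V)^{-2}\otimes K^{2}$; in the degenerate cases the subbundles you invoke ($V$ itself when $\gamma\equiv 0$, and the saturation $N$ of $\ker\gamma$ when $\gamma$ has generic rank one) are indeed isotropic and $\Phi$-invariant, so semistability in the form ``$\Phi$-invariant isotropic subbundles have nonpositive degree'' --- which is equivalent to semistability of $(V,\beta,\gamma)$ by the results of \cite{GGMsymplectic} --- applies to them, and the symmetry of $\gamma$ correctly forces the factorization through $\bar{\gamma}:V/N\to (V/N)^{*}\otimes K$. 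A by-product of your case analysis is worth making explicit: if $d=2g-2$ with $g\ge 2$, only the case $\det\gamma\not\equiv 0$ can occur, and then $\det\gamma$ is a nowhere-vanishing section of a degree-zero line bundle, so $\gamma$ is an isomorphism --- exactly the consequence of Gothen's proof that the paper quotes right after the proposition and uses to define the Cayley partner in \S 2.6.
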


\begin{definition}
We shall call $\text{Sp}\left( 4,\mathbb{R} \right)$-Higgs bundles with Toledo invariant $d=2g-2$ \emph{maximal} and denote the subspace of $\mathsf{\mathcal{M}}\left( \text{Sp}(4,\mathbb{R}) \right)$ consisting of Higgs bundles with maximal positive Toledo invariant by ${{\mathsf{\mathcal{M}}}^{\max }}\simeq {{\mathsf{\mathcal{M}}}_{2g-2}}$.
\end{definition}

The proof of Proposition \ref{Milnor_Wood_classical} given by P. Gothen in \cite{Gothen} in the language of Higgs bundles opens the way to considering new topological invariants for our Higgs bundles in order to count the exact number of components of ${{\mathsf{\mathcal{M}}}^{\max }}$. Namely, one sees from that proof that for a maximal semistable $\text{Sp}\left( 4,\mathbb{R} \right)$-Higgs bundle  $\left( V,\beta ,\gamma  \right)$, the map $\gamma :V\to {{V}^{*}}\otimes K$ is an \emph{isomorphism}.

For a fixed square root of the canonical bundle $K$, that is, a line bundle $L_{0}$ such that $L_{0}^{2} \cong K$, the isomorphism $\gamma$ can be used to construct an $\text{O}\left( 2,\mathbb{C} \right)$-holomorphic bundle $\left( W,{{q}_{W}} \right)$, for $W:={{V}^{*}}\otimes {{L}_{0}}$ and ${{q}_{W}}:=\gamma \otimes {{I}_{L_{0}^{-1}}}:{{W}^{*}}\xrightarrow{\simeq } W$. The Stiefel-Whitney classes $w_{1}$, $w_{2}$ of this orthogonal bundle $\left( W,{{q}_{W}} \right)$, which is called the \emph{Cayley partner} of the $\text{Sp}\left( 4,\mathbb{R} \right)$-Higgs bundle $\left( V,\beta ,\gamma  \right)$, are now appropriate topological invariants to study the topology of the moduli space ${{\mathsf{\mathcal{M}}}^{\max }}$. The classification of $\text{O}\left( 2,\mathbb{C} \right)$-holomorphic bundles by D. Mumford in \cite{Mumford} provides that for a rank 2 orthogonal bundle $\left( W,{{q}_{W}} \right)$ with ${{w}_{1}}\left( W,{{q}_{W}} \right)=0$, one has $W=L\oplus {{L}^{-1}}$ for $L\to X$ a line bundle and ${{q}_{W}}=\left( \begin{matrix}
   0 & 1  \\
   1 & 0  \\
\end{matrix} \right)$, whereas the stability condition imposes that
\[0\le \deg \left( L \right)\le 2g-2.\]
This way, the degree $\deg \left( L \right)$ introduces an additional invariant into the study of components of the moduli space ${{\mathsf{\mathcal{M}}}^{\max }}$. In fact, when $\deg \left( L \right)=2g-2$ the connected components are parameterized by spin structures on the surface $\Sigma$. Using Morse theory techniques and a careful study of the closed subvarieties corresponding to all possible values of the invariants $w_{1}$, $w_{2}$ and $\text{deg}(L)$, it was shown in \cite{Gothen} that the total number of connected components of the moduli space ${{\mathsf{\mathcal{M}}}^{\max }}$ is $3\cdot {{2}^{2g}}+2g-4$, for $g$ the genus of the Riemann surface $X$.

\subsection{Maximal fundamental group representations into $\text{Sp}\left( 4,\mathbb{R} \right)$}\label{maximal_ representations}
From an alternative point of view, the non-abelian Hodge theorem provides a homeomorphism of ${{\mathsf{\mathcal{M}}}^{\max }}$ to a moduli space of representations ${{\mathsf{\mathcal{R}}}^{\max }}$, particular points of which will be  briefly described next.

Let $G$ be a Hermitian Lie group of non-compact type, that is, the symmetric space associated to $G$ is an irreducible Hermitian symmetric space of non-compact type. Using the identification ${{H}^{2}}\left( {{\pi }_{1}}\left( \Sigma  \right),\mathbb{R} \right)\simeq {{H}^{2}}\left( \Sigma ,\mathbb{R} \right)$, the \emph{Toledo invariant} of a representation $\rho :{{\pi }_{1}}\left( \Sigma  \right)\to G$ is defined as the integer
	\[{{T}_{\rho }}:=\left\langle {{\rho }^{*}}\left( {{\kappa }_{G}} \right),\left[ \Sigma  \right] \right\rangle, \] where ${{\rho }^{*}}\left( {{\kappa }_{G}} \right)$ is the pullback of the K{\"a}hler class ${{\kappa }_{G}}\in H_{c}^{2}\left( G,\mathbb{R} \right)$ of $G$ and $\left[ \Sigma  \right]\in {{H}_{2}}\left( \Sigma ,\mathbb{R} \right)$ is the orientation class. It is bounded in absolute value,	$\left| {{T}_{\rho }} \right|\le -C\left( G \right)\chi \left( \Sigma  \right)$, where $C\left( G \right)$ is an explicit constant depending only on $G$; we refer the reader to \cite{BIW} for more details.

\begin{definition} A representation $\rho :{{\pi }_{1}}\left( \Sigma  \right)\to G$ is called \emph{maximal} whenever the Toledo invariant ${{T}_{\rho }}=-C\left( G \right)\chi \left( \Sigma  \right)$.
\end{definition}

Note that the Toledo invariant of a representation $\rho :{{\pi }_{1}}\left( \Sigma  \right)\to \text{Sp}\left( 4,\mathbb{R} \right)$ coincides with the Toledo invariant of an $\text{Sp}\left( 4,\mathbb{R} \right)$-Higgs bundle as reviewed in \S \ref{Toledo_inv}; we refer to \cite{HaOt} for a broader discussion relating the Milnor-Wood inequality in these two contexts. The next theorem distinguishes a family of connected components of maximal representations $\rho :{{\pi }_{1}}\left( \Sigma  \right)\to \text{Sp}\left( 4,\mathbb{R} \right)$ of special geometric significance; its proof was obtained from the Higgs bundle point of view: 

\begin{theorem}[Theorem 1.1 in \cite{BGGDeformations}]
There are $2g-3$ connected components of ${{\mathsf{\mathcal{M}}}^{\max }}\simeq {{\mathsf{\mathcal{R}}}^{\max }}$, in which the corresponding representations do not factor through any proper reductive subgroup of $\mathrm{Sp}\left( 4,\mathbb{R} \right)$, thus they have Zariski-dense image in $\mathrm{Sp}\left( 4,\mathbb{R} \right)$.
\end{theorem}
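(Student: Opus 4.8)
This statement is due to Bradlow--Garc{\'i}a-Prada--Gothen \cite{BGGDeformations}, and I would follow their Higgs-bundle strategy, using as the key discriminant the degree $\deg L$ introduced above.

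First I would pin down the relevant components by their topological invariants. On the locus where the Cayley partner splits, $w_1(W,q_W)=0$, Mumford's classification gives $W=L\oplus L^{-1}$, and the induced $K^2$-twisted Higgs field produces a nonzero section of $L^{-2}K^2$, forcing $0\le d:=\deg L\le 2g-2$. I would then isolate the interior values $0<d<2g-2$: there are precisely $2g-3$ such integers, and by Gothen's Morse-theoretic count \cite{Gothen} each yields a single connected component of $\mathsf{\mathcal{M}}^{\max}$ on which $d$ is constant. These are exactly the $3\cdot 2^{2g}+2g-4$ total components minus the $3\cdot 2^{2g}-1$ that admit $\mathrm{SL}(2,\mathbb{R})$-embedding models; I would also note that, consisting of stable (hence simple) Higgs bundles, they are smooth.

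Next I would reduce the Zariski-density claim to a statement about reductions. Since the representations arise from polystable Higgs bundles via the non-abelian Hodge correspondence (Theorem 2.4), they are reductive, so their Zariski closure is reductive; hence such a $\rho$ is Zariski dense precisely when it does not factor through a proper reductive subgroup $G'\subsetneq \mathrm{Sp}(4,\mathbb{R})$. Under non-abelian Hodge, factoring through $G'$ corresponds to the $\mathrm{Sp}(4,\mathbb{R})$-Higgs bundle $(V,\beta,\gamma)$ being induced from $G'$-Higgs data, i.e.\ to a holomorphic reduction compatible with the Higgs field. Because $d=\deg L$ is locally constant on $\mathsf{\mathcal{M}}^{\max}$, it then suffices to prove that any such reduction forces $d$ to a boundary value $d\in\{0,2g-2\}$: this excludes all reductions on an interior component at once, so \emph{every} representation there is Zariski dense.

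The heart of the argument, and the step I expect to be the main obstacle, is the case analysis over the proper reductive subgroups of $\mathrm{Sp}(4,\mathbb{R})$ that support maximal representations. By the structure theory of maximal representations into Hermitian groups \cite{BIW}, the Zariski closure $G'$ must be of tube type, which limits the candidates up to conjugacy to the irreducibly embedded $\mathrm{SL}(2,\mathbb{R})$ (the principal embedding via $\mathrm{Sym}^3$), the reducible products such as $\mathrm{SL}(2,\mathbb{R})\times\mathrm{SL}(2,\mathbb{R})$, and the remaining Hermitian subgroups (for instance $\mathrm{U}(1,1)$, $\mathrm{GL}(2,\mathbb{R})$, and the diagonal $\mathrm{Sp}(2,\mathbb{R})$). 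For each I would translate the reduction into the corresponding decomposition of $(V,\beta,\gamma)$ and compute the induced $\deg L$ through the Cayley correspondence: the principal $\mathrm{SL}(2,\mathbb{R})$ lands in the Fuchsian/Hitchin stratum, where the section of $L^{-2}K^2$ is nowhere vanishing and so $d=2g-2$, whereas the block-diagonal reductions force the Higgs field to respect a splitting and pin $d$ to $0$. The delicate points are checking that these exhaust the tube-type possibilities and that each genuinely forces a boundary value of $d$ --- exactly the computation carried out in \cite{BGGDeformations}. Since no reduction can produce an interior $d$, each representation in the $2g-3$ components with $0<d<2g-2$ has Zariski-dense image, as claimed.
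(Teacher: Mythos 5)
The paper itself contains no proof of this statement: it is Theorem 2.9, quoted with attribution from Bradlow--Garc\'ia-Prada--Gothen \cite{BGGDeformations} and used as an imported result, so the only benchmark to compare you against is the proof in that reference. Measured against it, your outline reconstructs the correct strategy: the $2g-3$ exceptional components are exactly those with $w_1(W,q_W)=0$ and interior degree $0<\deg L<2g-2$, each connected by Gothen's count \cite{Gothen}; Zariski density of a reductive representation is equivalent to non-factorization through a proper reductive subgroup; and by Burger--Iozzi--Wienhard \cite{BIW} the Zariski closure of a maximal representation is a tightly embedded reductive subgroup of tube type, so it suffices to show that every admissible closure forces the invariants to a boundary value ($\deg L\in\{0,2g-2\}$ on the $w_1=0$ locus, or $w_1\neq 0$), contradicting interiority.

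The caveat is that the two steps you flag as ``delicate'' are not side issues --- they are the entire mathematical content of the theorem, and your proposal does not carry them out but defers them back to \cite{BGGDeformations}. Concretely, one must (i) classify the tight tube-type proper subgroups of $\mathrm{Sp}(4,\mathbb{R})$ up to conjugation: essentially only the irreducible $\mathrm{SL}(2,\mathbb{R})$ and $\mathrm{SL}(2,\mathbb{R})\times\mathrm{SL}(2,\mathbb{R})$, possibly extended by compact centralizers such as $\mathrm{O}(2)$, survive, and the other candidates on your list must be actively excluded (e.g.\ $\mathrm{U}(1,1)$ because its embedding is not tight, so maximal representations cannot have it as Zariski closure, and the Siegel--Levi $\mathrm{GL}(2,\mathbb{R})$ because a maximal representation cannot preserve a Lagrangian splitting); and (ii) compute the induced Cayley data in each case: the irreducible embedding gives $V\cong L_0^{3}\oplus L_0^{-1}$, hence $W\cong K\oplus K^{-1}$ and $\deg L=2g-2$, while the product and twisted-diagonal cases give a flat orthogonal $W$, hence $\deg L=0$ whenever $w_1=0$. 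Since you explicitly outsource both steps to the cited paper, your text is an accurate road map rather than a self-contained argument; given that the paper under review likewise treats the statement purely as a citation, this is a defensible level of detail, but the gap between road map and proof should be acknowledged.
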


In \cite{GW}, O. Guichard and A. Wienhard describe model maximal fundamental group representations $\rho :{{\pi }_{1}}\left( \Sigma  \right)\to \text{Sp(4}\text{,}\mathbb{R}\text{)}$ in components of ${{\mathsf{\mathcal{R}}}^{\max }}$. These models are distinguished into two subcategories, namely \emph{standard representations} and \emph{hybrid representations}.

We review next the construction of these model representations in further detail with particular attention towards the construction of the hybrid representations. Fix a discrete embedding $i:{{\pi }_{1}}\left( \Sigma  \right)\to \text{SL}\left( 2,\mathbb{R} \right)$.

i) \emph{Irreducible Fuchsian representations}

Let ${{V}_{0}}={{\mathbb{R}}_{1}}\left[ X,Y \right]\cong {{\mathbb{R}}^{2}}$ be the space of homogeneous polynomials of degree 1 in the variables $X$ and $Y$ with the symplectic form ${{\omega }_{0}}\left( X,Y \right)=1$. The induced action of $\text{Sp}\left( {{V}_{0}} \right)\cong \text{SL}\left( 2,\mathbb{R} \right)$ on $V=\text{Sy}{{\text{m}}^{3}}{{V}_{0}}\cong {{\mathbb{R}}_{3}}\left[ X,Y \right]\cong {{\mathbb{R}}^{4}}$ preserves the symplectic form ${{\omega }_{2}}=\text{Sy}{{\text{m}}^{3}}{{\omega }_{0}}$. Choose the symplectic identification $\left( {{\mathbb{R}}_{3}}\left[ X,Y \right],-{{\omega }_{2}} \right)\cong \left( {{\mathbb{R}}^{4}},\omega  \right)$ given by ${{X}^{3}}={{e}_{1}},\,{{X}^{2}}Y=-{{e}_{2}},\,{{Y}^{3}}=-{{e}_{3}},\,X{{Y}^{2}}=\frac{-{{e}_{4}}}{\sqrt{3}}$, where $\omega$ is the symplectic form given by the antisymmetric matrix $J=\left( \begin{matrix}
   0 & \text{I}{{\text{d}}_{n}}  \\
   -\text{I}{{\text{d}}_{n}} & 0  \\
\end{matrix} \right)$. With respect to this identification the irreducible representation ${{\phi }_{irr}}:\text{SL}\left( 2,\mathbb{R} \right)\to \text{Sp}\left( 4,\mathbb{R} \right)$ is given by

\begin{equation}\label{irreducible_rep}
{{\phi }_{irr}}\left( \begin{matrix}
   a & b  \\
   c & d  \\
\end{matrix} \right)=\left( \begin{matrix}
   {{a}^{3}} & -\sqrt{3}{{a}^{2}}b & -{{b}^{3}} & -\sqrt{3}a{{b}^{2}}  \\
   -\sqrt{3}{{a}^{2}}c & 2abc+{{a}^{2}}d & \sqrt{3}{{b}^{2}}d & 2abd+{{b}^{2}}c  \\
   -{{c}^{3}} & \sqrt{3}{{c}^{2}}d & {{d}^{3}} & \sqrt{3}c{{d}^{2}}  \\
   -\sqrt{3}a{{c}^{2}} & 2acd+b{{c}^{2}} & \sqrt{3}b{{d}^{2}} & 2bcd+a{{d}^{2}}  \\
\end{matrix} \right).
\end{equation}
Precomposition with $i:{{\pi }_{1}}\left( \Sigma  \right)\to \text{SL}\left( 2,\mathbb{R} \right)$ gives rise to an \emph{irreducible Fuchsian representation} ${{\rho }_{irr}}:{{\pi }_{1}}\left( \Sigma  \right)\xrightarrow{i}\text{SL}\left( 2,\mathbb{R} \right)\xrightarrow{{{\phi }_{irr}}}\text{Sp}\left( 4,\mathbb{R} \right)$.

ii) \emph{Diagonal Fuchsian representations}

Let ${{\mathbb{R}}^{4}}={{W}_{1}}\oplus {{W}_{2}}$, with ${{W}_{i}}=\text{span}\left( {{e}_{i}},{{e}_{2+i}} \right)$ be a symplectic splitting of ${{\mathbb{R}}^{4}}$ with respect to the symplectic basis ${{\left( {{e}_{i}} \right)}_{i=1,\ldots ,4}}$. This splitting gives rise to an embedding $\psi :\text{SL}{{\left( 2,\mathbb{R} \right)}^{2}}\to \text{Sp}\left( {{W}_{1}} \right)\times \text{Sp}\left( {{W}_{2}} \right)\subset \text{Sp}\left( 4,\mathbb{R} \right)$ given by
\begin{equation}\label{diagonal_Fuchsian}
\psi \left( \left( \begin{matrix}
   a & b  \\
   c & d  \\
\end{matrix} \right),\left( \begin{matrix}
   \alpha  & \beta   \\
   \gamma  & \delta   \\
\end{matrix} \right) \right)=\left( \begin{matrix}
   a & 0 & b & 0  \\
   0 & \alpha  & 0 & \beta   \\
   c & 0 & d & 0  \\
   0 & \gamma  & 0 & \delta   \\
\end{matrix} \right).
\end{equation}
Precomposition with the diagonal embedding of $\text{SL}\left( 2,\mathbb{R} \right)\to \text{SL}{{\left( 2,\mathbb{R} \right)}^{2}}$ gives rise to the diagonal embedding ${{\phi }_{\Delta }}:\text{SL}\left( 2,\mathbb{R} \right)\to \text{Sp}\left( 4,\mathbb{R} \right)$, while precomposition with $i:{{\pi }_{1}}\left( \Sigma  \right)\to \text{SL}\left( 2,\mathbb{R} \right)$ gives now rise to a \emph{diagonal Fuchsian representation} ${{\rho }_{\Delta}}:{{\pi }_{1}}\left( \Sigma  \right)\xrightarrow{i}\text{SL}\left( 2,\mathbb{R} \right)\xrightarrow{{{\phi }_{\Delta}}}\text{Sp}\left( 4,\mathbb{R} \right)$.

iii) \emph{Twisted diagonal representations}

For any maximal representation $\rho :{{\pi }_{1}}\left( \Sigma  \right)\to \text{Sp}\left( 4,\mathbb{R} \right)$ the centralizer $\rho \left( {{\pi }_{1}}\left( \Sigma  \right) \right)$ is a subgroup of $\text{O}\left( 2 \right)$. Considering a representation $\Theta :{{\pi }_{1}}\left( \Sigma  \right)\to \text{O}\left( 2 \right)$, set \begin{align*}
{{\rho }_{\Theta }}=i\otimes \Theta :{{\pi }_{1}}\left( \Sigma  \right) & \to \text{Sp}\left( 4,\mathbb{R} \right)\\
  \gamma & \mapsto {{\phi }_{\Delta }}\left( i\left( \gamma  \right),\Theta \left( \gamma  \right) \right).
\end{align*}
Such a representation will be called a \emph{twisted diagonal representation}.

\begin{remark}
The representations in the families (i)-(iii) above are the so-called \emph{standard representations}.
\end{remark}

iv) \emph{Hybrid representations}

The definition of hybrid representations involves a gluing construction for fundamental group representations over a connected sum of surfaces. Let $\Sigma ={{\Sigma }_{l}}{{\cup }_{\gamma }}{{\Sigma }_{r}}$ be a decomposition of the surface $\Sigma $ along a simple closed oriented separating geodesic $\gamma $ into two subsurfaces ${{\Sigma }_{l}}$ and ${{\Sigma }_{r}}$. Pick ${{\rho }_{irr}}:{{\pi }_{1}}\left( \Sigma  \right)\to \text{SL}\left( 2,\mathbb{R} \right)\xrightarrow{{{\phi }_{irr}}}\text{Sp}\left( 4,\mathbb{R} \right)$ an irreducible Fuchsian representation and ${{\rho }_{\Delta }}:{{\pi }_{1}}\left( \Sigma  \right)\to \text{SL}\left( 2,\mathbb{R} \right)\xrightarrow{\Delta }\text{SL}{{\left( 2,\mathbb{R} \right)}^{2}}\to \text{Sp}\left( 4,\mathbb{R} \right)$ a diagonal Fuchsian representation. One could amalgamate the restriction of the irreducible Fuchsian representation ${{\rho }_{irr}}$ to ${{\Sigma }_{l}}$ with the restriction of the diagonal Fuchsian representation ${{\rho }_{\Delta }}$ to ${{\Sigma }_{r}}$, however the holonomies of those along $\gamma $ a priori do not agree. A deformation of ${{\rho }_{\Delta }}$ on ${{\pi }_{1}}\left( \Sigma  \right)$ can be considered, such that the holonomies would agree along $\gamma $, thus allowing the amalgamation operation. This continuous deformation is defined in \S 3.3.1 of \cite{GW} using continuous paths of embeddings ${{\pi }_{1}}\left( \Sigma  \right)\to \text{Sp}\left( 4,\mathbb{R} \right)$, which have the fixed discrete embedding $\iota: {{\pi }_{1}}\left( \Sigma  \right)\to \text{SL}\left( 2,\mathbb{R} \right)$ as their initial point and as an end point, appropriately chosen embeddings, say $\tau_{1}$, $\tau_{2}$, with diagonal holonomy. Composing the pair $\left( \tau_{1}, \tau_{2} \right)$ with the map $\psi$ defined in (\ref{diagonal_Fuchsian}) finally gives rise to a continuous deformation $\rho_{r}$ of $\rho_{\Delta}$, such that by construction it satisfies ${{\rho }_{r}}\left( \gamma  \right)={{\rho }_{l}}\left( \gamma  \right)$, where $\rho_{l} \equiv \rho_{irr}$. This introduces new representations by gluing:

\begin{definition}  A \emph{hybrid representation} is defined as the amalgamated representation
\[\rho :={{\rho }_{l}}\left| _{{{\pi }_{1}}\left( {{\Sigma }_{l}} \right)} \right.*{{\rho }_{r}}\left| _{{{\pi }_{1}}\left( {{\Sigma }_{r}} \right)} \right.:{{\pi }_{1}}\left( \Sigma  \right)\simeq {{\pi }_{1}}\left( {{\Sigma }_{l}} \right){{*}_{\left\langle \gamma  \right\rangle }}{{\pi }_{1}}\left( {{\Sigma }_{r}} \right)\to \text{Sp}\left( 4,\mathbb{R} \right).\]
\end{definition}

The following important result was established in \cite{GW}:

\begin{theorem}[Theorem 14 in \cite{GW}]
Every maximal representation $\rho :{{\pi }_{1}}\left( \Sigma  \right)\to \mathrm{Sp}\left( 4,\mathbb{R} \right)$ can be deformed to a standard representation or a hybrid representation.
\end{theorem}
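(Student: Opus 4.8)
The plan is to recast the statement as a count-and-cover argument over the connected components of $\mathsf{\mathcal{R}}^{\max}$. Since $\mathsf{\mathcal{R}}^{\max}$ is a real-analytic variety, hence locally path-connected, the assertion that $\rho$ can be deformed to a model representation is equivalent to the assertion that $\rho$ lies in the same connected component as that model. Via the non-abelian Hodge correspondence (Theorem 2.4) I would pass to the moduli space $\mathsf{\mathcal{M}}^{\max}$ of maximal Higgs bundles, whose components have been completely enumerated by Gothen: there are exactly $3\cdot 2^{2g}+2g-4$ of them, distinguished by the Stiefel--Whitney classes $\left( w_1,w_2 \right)$ of the Cayley partner $\left( W,q_W \right)$, and, when $w_1=0$, by the degree $\deg\left( L \right)$ of the destabilizing line subbundle subject to $0\le \deg\left( L \right)\le 2g-2$. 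It therefore suffices to produce a standard or a hybrid representative in every such component, after which the totals must be matched against Gothen's count.

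First I would account for the $3\cdot 2^{2g}-1$ non-exceptional components using the standard representations of families (i)--(iii). The twisted diagonal representations $\rho_{\Theta}=i\otimes\Theta$ realize every component with $w_1\left( W,q_W \right)\neq 0$ together with the admissible values of $w_2$, since the Stiefel--Whitney classes of $W$ are read off directly from the $\mathrm{O}\left( 2 \right)$-twist $\Theta:\pi_1\left( \Sigma \right)\to\mathrm{O}\left( 2 \right)$; in the split case $w_1=0$ the same family, now with an $\mathrm{SO}\left( 2 \right)$-twist, together with the diagonal Fuchsian representation and the irreducible Fuchsian representation, realizes the two extremal degrees $\deg\left( L \right)\in\{0,2g-2\}$, the latter being the $2^{2g}$ Hitchin-type components indexed by spin structures and detected by $\rho_{irr}$. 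The bookkeeping here is the verification that these three families, run over all admissible twists and spin structures, exhaust precisely the components whose degree invariant is \emph{non-intermediate}.

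The crux is the remaining $2g-3$ exceptional components, which are exactly those with $w_1=0$ and intermediate degree $0<\deg\left( L \right)<2g-2$, and which are to be populated by the $k$-hybrid representations. For such a hybrid I would amalgamate the restriction of an irreducible Fuchsian representation on $\Sigma_l$ with that of a diagonal Fuchsian representation on $\Sigma_r$ along the separating geodesic $\gamma$, after the holonomy-matching deformation described just before the definition of hybrid representations. To pin down the component I would compute its degree invariant using the additivity of the Toledo invariant under the amalgamation/connected-sum operation, namely the additivity Proposition stated in the introduction and, on the representation side, the theorem of Burger--Iozzi--Wienhard \cite{BIW}. This additivity identifies $\deg\left( L \right)$ for the $k$-hybrid as a quantity governed by $\chi\left( \Sigma_l \right)=k$, so that letting $k$ range over its geometrically allowed values sweeps out exactly the intermediate degrees, one exceptional component per value.

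The main obstacle I anticipate is precisely this last computation: establishing that the topological invariant of the $k$-hybrid, defined through the gluing, coincides with the Higgs-bundle degree $\deg\left( L \right)$ that labels the exceptional components, and that distinct values of $k$ land in distinct components filling the intermediate range without overlap or omission. This is where the additivity result does the essential work, since a priori the invariant of an amalgamated representation lives in a different cohomology group than the degree of a line bundle on $X_{\#}$, and reconciling the two is the heart of the matter. Once this identification is in place the counts match on the nose, with $3\cdot 2^{2g}-1$ non-exceptional components covered by standard models and $2g-3$ exceptional components covered by hybrids, totalling Gothen's $3\cdot 2^{2g}+2g-4$; hence every component contains a model and every maximal representation can be deformed to a standard or a hybrid representation.
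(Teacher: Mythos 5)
First, a point of orientation: the paper you are working from does not prove this statement at all — it is quoted as Theorem 2.12 directly from Guichard--Wienhard \cite{GW}, and the whole purpose of the present article is to build, on the Higgs bundle side, machinery for the very step your proposal treats as routine. That said, your overall architecture (Gothen's count of $3\cdot 2^{2g}+2g-4$ components, plus a standard or hybrid model realizing each admissible value of the invariants, hence a bijection between models' invariant values and components) is indeed the architecture of the proof in \cite{GW}, and your identification of the exceptional components as those with $w_{1}=0$ and $0<\deg \left( L \right)<2g-2$ is correct.

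The genuine gap is the mechanism you propose for the crucial step. The additivity you invoke (Proposition 2.13 of the paper, from \cite{BIW}, and its holomorphic analogue Proposition 8.1) concerns the \emph{Toledo} invariant, i.e.\ $\deg \left( V \right)$. For a maximal representation this equals $2g-2$ no matter what; its additivity proves that a hybrid is again maximal, but it carries no information about $\deg \left( L \right)$, the degree of the line subbundle in the splitting $W=L\oplus {{L}^{-1}}$ of the Cayley partner, which is the invariant that actually separates the $2g-3$ exceptional components (note $\deg \left( W \right)=0$ for \emph{every} maximal Higgs bundle, consistent with any value of $\deg \left( L \right)$). So your assertion that letting $k=\chi \left( {{\Sigma }_{l}} \right)$ vary ``sweeps out exactly the intermediate degrees'' does not follow from anything you have established. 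In \cite{GW} this step is carried out with different tools: they define topological invariants of Anosov representations ($\text{s}{{\text{w}}_{1}}$, $\text{s}{{\text{w}}_{2}}$, and an Euler class in ${{H}^{2}}\left( {{T}^{1}}\Sigma ,\mathbb{Z} \right)$ when $\text{s}{{\text{w}}_{1}}=0$) which admit their own gluing formulas under amalgamation along the separating geodesic, compute them for hybrids to get $e=-\chi \left( {{\Sigma }_{l}} \right)\left[ \Sigma  \right]$, and never pass through non-abelian Hodge theory or through $\deg \left( L \right)$. Translating that topological computation into the Higgs bundle invariant is genuinely nontrivial — the correspondence is not explicit over a given representation — and is precisely what Sections 4--8 of this paper accomplish by constructing hybrid Higgs bundles holomorphically and computing $\deg \left( {{L}_{\#}} \right)=2{{g}_{1}}+s-2$ (Proposition 8.6), with the comparison to the Euler class of \cite{GW} obtained only as a consequence. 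A smaller bookkeeping defect: a single irreducible Fuchsian representation ${{\rho }_{irr}}$ lies in a single one of the $2^{2g}$ components with $\deg \left( L \right)=2g-2$, so these components are not ``detected by ${{\rho }_{irr}}$'' alone; one needs the additional twisting data (choices of theta characteristic/lift) to populate all of them.
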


The subsurfaces ${{\Sigma }_{l}}$ and ${{\Sigma }_{r}}$ that we are considering here are surfaces with boundary. A notion of Toledo invariant can be also defined for representations over such surfaces and it thus makes sense to talk about maximal representations over surfaces with boundary as well; see \cite{BIW} for a detailed definition. Moreover, the authors in \cite{BIW} have established an additivity property for the Toledo invariant over a connected sum of surfaces. In particular:

\begin{proposition}[Proposition 3.2 in \cite{BIW}]\label{additivity_Toledo_rep}
 If $\Sigma ={{\Sigma }_{l}}{{\cup }_{\gamma}}{{\Sigma }_{r}}$ is the connected sum of two subsurfaces ${{\Sigma }_{i}}$ along a separating loop $\gamma$, then
\[{{\text{T}}_{\rho }}={{\text{T}}_{\rho_{1} }} + {{\text{T}}_{\rho_{2}}},\]
where ${{\rho }_{i}}=\rho \left| _{{{\pi }_{1}}\left( {{\Sigma }_{i}} \right)} \right.$, for $i=l,r$.
\end{proposition}
Note that this property implies that the amalgamated product of two maximal representations is again a maximal representation defined over the compact surface $\Sigma$.

\section{Parabolic $\text{Sp(4}\text{,}\mathbb{R}\text{)}$-Higgs bundles}

Parabolic Higgs bundles were first considered as the holomorphic objects over a non-compact curve that correspond to fundamental group representations with fixed arbitrary holonomy around the boundary of the surface. Examples of primary reference include \cite{BoYo}, \cite{GGmunoz}, \cite{Konno}, \cite{Simpson-noncompact}. For our considerations in this article, we will be using specific parabolic $\text{SL(2}\text{,}\mathbb{R}\text{)}$ and $\text{Sp(4}\text{,}\mathbb{R}\text{)}$-Higgs bundle pairs to be described in this section.

\subsection{Parabolic $\text{SL}\left( 2,\mathbb{R} \right)$-Higgs bundles}\label{section_parabolic_SL2}

In \cite{BiswasAresGovin}, the authors consider parabolic Higgs bundles that correspond to Fuchsian representations on a punctured Riemann surface, thus generalizing the fundamental result of N. Hitchin in \cite{Hit92} on the construction of the Teichm\"{u}ller space via Higgs bundles in the absence of punctures. For this result, a specific choice of a parabolic structure is made in \cite{BiswasAresGovin}, namely a trivial flag with weight $\frac{1}{2}$ is giving rise to a Poincar\'{e} metric on the holomorphic tangent bundle on the punctured Riemann surface. We review this family of parabolic   $\text{SL}\left( 2,\mathbb{R} \right)$-Higgs bundles next as it will play an important role in constructing our higher rank models. 

Let $X$ be a compact Riemann surface of genus $g$ and let a divisor of $s$-many distinct points $D=\left\{ {{x}_{1}},\ldots ,{{x}_{s}} \right\}$ from $X$, such that $2g-2+s > 0$. The punctured surface $X\backslash D$ thus admits a metric of constant negative curvature (-4) and let $K$ be the canonical line bundle over $X$. We consider a pair $\left( E, \Phi \right)$ as follows: 
\begin{enumerate}
  \item $E:={{\left( L\otimes \iota  \right)}^{*}}\oplus L$, \\
  where $L$ is a line bundle with ${{L}^{2}}={{K}}$ and $\iota :={{\mathsf{\mathcal{O}}}_{X}}\left( D \right)$ denotes the line bundle over the divisor $D$; we equip the bundle $E$ with a parabolic structure given by a trivial flag ${{E}_{{{x}_{i}}}}\supset \left\{ 0 \right\}$ and weight $\frac{1}{2}$ for every $1\le i\le s$,
  \item $\Phi :=\left( \begin{matrix}
   0 & 1  \\
   0 & 0  \\
\end{matrix} \right)\in {{H}^{0}}\left( X,\text{End}\left( E \right)\otimes K \otimes \iota \right)$.
\end{enumerate}
The pair $\left( E, \Phi \right)$ is a parabolic Higgs bundle (for us, a parabolic $\text{SL}\left( 2,\mathbb{R} \right)$-Higgs bundle) and one can easily see that it is stable and of parabolic degree zero (Lemma 2.1 in \cite{BiswasAresGovin}). Therefore, from the non-abelian Hodge correspondence on non-compact curves \cite{Simpson-noncompact}, the vector bundle $E$ supports a tame harmonic metric; the local estimate for this hermitian metric on $E$ restricted to the line bundle $L$ is
\[{{r}^{\frac{1}{2}}}{{\left| \log r \right|}^{\frac{1}{2}}},\] 
for $r=\left| z \right|$. Consequently, the metric on the tangent bundle ${{L}^{-2}}$ is locally ${{r}^{-1}}{{\left| \log r \right|}^{-1}}$ and is therefore the Poincar\'{e} metric of the punctured disk on $\mathbb{C}$. The authors in \cite{BiswasAresGovin} now showed that Fuchsian representations of ${{\pi }_{1}}\left( X\backslash D \right)$ into $\mathrm{PSL}\left( 2,\mathbb{R} \right)$ are in one-to-one correspondence with parabolic $\mathrm{SL}\left( 2,\mathbb{R} \right)$-Higgs bundles of the form $\left( E,\theta  \right)$ for $E \to X$ a parabolic rank 2 bundle as above and $\theta :=\left( \begin{matrix}
   0 & 1  \\
   a & 0  \\
\end{matrix} \right)\in {{H}^{0}}\left( X,\mathrm{End}\left( E \right)\otimes K \otimes \iota \right)$, for a meromorphic quadratic differential $a\in {{H}^{0}}\left( X,{{K}^{2}} \otimes \iota \right)$.

The family $\left( E,\theta  \right)$ describes a Hitchin-Teichm\"{u}ller component over a punctured Riemann surface; it has real dimension $6g-6+2s$. The result was extended in \cite{KSZ} for higher rank split Lie groups.

\subsection{Parabolic $\text{Sp(4}\text{,}\mathbb{R}\text{)}$-Higgs bundles}\label{section_parabolic_Sp4}

In this article we are interested in parabolic Higgs bundles with structure group $G=\text{Sp(4}\text{,}\mathbb{R}\text{)}$. We consider those as special parabolic $\text{GL(4}\text{,}\mathbb{C}\text{)}$-Higgs pairs in the sense of \cite{BoYo} or \cite{GGmunoz}. A general theory of parabolic $G$-Higgs bundles for a non-compact real reductive Lie group $G$ was provided by O. Biquard, O. Garc\'{i}a-Prada and I. Mundet i Riera in \cite{BiquardGM}; a detailed exposition on how the general definition of \cite{BiquardGM} specializes to the following in the case when $G=\text{Sp(4}\text{,}\mathbb{R}\text{)}$ can be found in Example A.25 in \cite{KSZ}.

\begin{definition} Let $X$ be a compact Riemann surface of genus $g$ and let the divisor $D:=\left\{ {{x}_{1}},\ldots ,{{x}_{s}} \right\}$ of $s$-many distinct points on $X$, assuming that $2g-2+s>0$. Fix a line bundle $\iota :={{\mathsf{\mathcal{O}}}_{X}}\left( D \right)$ over the divisor $D$. A \emph{parabolic $\mathrm{Sp(4}\text{,}\mathbb{R}\text{)}$-Higgs bundle} is defined as a triple $\left( V,\beta ,\gamma  \right)$, where
\begin{itemize}
\item $V$ is a rank 2 bundle on $X$, equipped with a parabolic structure given by a flag ${{V}_{x}}\supset {{L}_{x}}\supset 0$ and weights $0\le {{\alpha }_{1}}\left( x \right)<{{\alpha }_{2}}\left( x \right)<1$ for every $x\in D$, and
\item $\beta :{{V}^{\vee }}\to V\otimes K\otimes \iota$ and $\gamma :V\to {{V}^{\vee }}\otimes K\otimes \iota$ are strongly parabolic symmetric homomorphisms,
\end{itemize}
for ${{V}^{\vee }}:={{\left( V\otimes \iota  \right)}^{*}}$, the parabolic dual of the parabolic bundle $V$.
\end{definition}

The parabolic degree of the parabolic bundle $V$ is given by the rational number 
\[par \text{deg} \left( V \right) = \text{deg} \left( V \right) + \sum\limits_{{{x}_{i}}\in D}{\left( {{\alpha }_{1}}\left( {{x}_{i}} \right)+{{\alpha }_{2}}\left( {{x}_{i}} \right) \right)}.\]
The parabolic structures on $V$ and ${{V}^{\vee }}$ now induce a parabolic structure on the parabolic sum $E=V\oplus {{V}^{\vee }}$, for which $ par \text{deg} E=0$. We define alternatively a parabolic $\mathrm{Sp}\left( 4,\mathbb{R} \right)$-Higgs bundle on $\left( X, D \right)$ as a parabolic Higgs bundle $\left( E,\Phi  \right)$, where $E=V\oplus {{V}^{\vee }}$ and $\Phi =\left( \begin{matrix}
   0 & \beta   \\
   \gamma  & 0  \\
\end{matrix} \right):E\to E\otimes K\otimes \iota$.

A notion of parabolic Toledo invariant can still be considered:
\begin{definition}\label{par_toledo_Sp4}
The \emph{parabolic Toledo invariant} of a parabolic $\text{Sp}\left( 4,\mathbb{R} \right)$-Higgs bundle is defined as the rational number
\[\tau =par\deg \left( V \right).\]
\end{definition}

Moreover, we get a \emph{Milnor-Wood type inequality} for this topological invariant:
\begin{proposition}
Let $\left( E,\Phi  \right)$ be a semistable parabolic $\mathrm{Sp}\left( 4,\mathbb{R} \right)$-Higgs bundle over a pair $(X, D)$ defined as above. Then \[\left| \tau  \right|\le 2g-2+s,\]
where $s$ is the number of points in $D$.
\end{proposition}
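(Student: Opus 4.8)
The plan is to adapt P.~Gothen's non-parabolic Milnor--Wood argument to the parabolic setting, replacing ordinary degrees by parabolic degrees throughout and exploiting the symmetry of $\beta$ and $\gamma$. First I would reduce to a single inequality. Interchanging $\beta$ and $\gamma$ and replacing $V$ by its parabolic dual $V^\vee=(V\otimes\iota)^*$ produces an isomorphic parabolic $\text{Sp}(4,\mathbb{R})$-Higgs bundle whose Toledo invariant is $par\deg(V^\vee)=-\tau$; indeed, since $V^\vee$ carries the dual weights $1-\alpha_2(x)<1-\alpha_1(x)$ and $\deg V^\vee=-\deg V-2s$, a short computation gives $par\deg(V^\vee)=-par\deg(V)$, and semistability is preserved under this interchange. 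Hence it suffices to prove the upper bound $\tau\le 2g-2+s$, the lower bound following by duality.

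For the upper bound I would run a case analysis on the generic rank of the symmetric homomorphism $\gamma\colon V\to V^\vee\otimes K\otimes\iota$, using that semistability of $(V,\beta,\gamma)$ forces $par\mu(F)\le par\mu(V\oplus V^\vee)=0$ for every $\Phi$-invariant parabolic subbundle $F$ of the associated rank-$4$ bundle $E=V\oplus V^\vee$. If $\gamma=0$ then $V\subset E$ is $\Phi$-invariant and $\tau=par\deg(V)\le 0$. If $\gamma$ is generically an isomorphism, its determinant is a nonzero strongly parabolic morphism $\det V\to\det V^\vee\otimes(K\otimes\iota)^{\otimes 2}$; since a nonzero parabolic morphism of parabolic line bundles $A\to B$ forces $par\deg A\le par\deg B$, and since $par\deg(\det V)=\tau$, $par\deg(\det V^\vee)=-\tau$ and $\deg(K\otimes\iota)=2g-2+s$ (the canonical-twist factor carrying weight zero), this yields $\tau\le -\tau+2(2g-2+s)$, i.e.\ $\tau\le 2g-2+s$.

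The remaining case, $\gamma\ne 0$ of generic rank one, is where the determinant vanishes and the argument is most delicate. Here I would pass to the saturated kernel $N=\ker\gamma$, a parabolic line subbundle of $V$ with $\gamma(N)=0$; then $N\subset E$ is $\Phi$-invariant, so $par\deg N\le 0$. The symmetry of $\gamma$ identifies its saturated image with $(V/N)^\vee\otimes K\otimes\iota$, so the induced map $V/N\to(V/N)^\vee\otimes K\otimes\iota$ is a nonzero parabolic morphism of line bundles, giving $2\,par\deg(V/N)\le 2g-2+s$. Adding the two estimates bounds $\tau=par\deg N+par\deg(V/N)$ comfortably within $2g-2+s$.

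The main obstacle is the parabolic bookkeeping in the two morphism inequalities: one must verify that the strongly parabolic condition (nilpotent residues along $D$) makes $\det\gamma$, respectively the induced map on $V/N$, into a genuine parabolic morphism, so that the clean estimate $par\deg A\le par\deg B$ applies with no residual boundary contributions, and one must track the dual weights carefully enough to confirm that $K\otimes\iota$ contributes exactly $2g-2+s$ and that $par\deg V^\vee=-par\deg V$. Identifying the saturated image of the symmetric rank-one $\gamma$ with $(V/N)^\vee\otimes K\otimes\iota$ as \emph{parabolic} bundles is the key computational point; once this is in place, the inequalities assemble immediately.
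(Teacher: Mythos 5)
Your proposal is correct and is essentially the argument behind the paper's citation: the paper itself gives no proof, deferring to Proposition 5.4 of \cite{KSZ}, and the proof there is exactly this Gothen-type analysis, in which $\ker\gamma$ yields a $\Phi$-invariant parabolic subbundle of $V\oplus V^{\vee}$ (hence of nonpositive parabolic degree by semistability), while the symmetry of $\gamma$ embeds $V/\ker\gamma$ into $(V/\ker\gamma)^{\vee}\otimes K\otimes\iota$, with parabolic degrees and parabolic morphisms replacing the ordinary ones. The only point worth flagging is that your opening step --- using semistability in the form $par\mu(F)\le par\mu\left(V\oplus V^{\vee}\right)=0$ for every $\Phi$-invariant parabolic subbundle $F$ --- is the simplified slope characterization of parabolic $\text{Sp}(4,\mathbb{R})$-semistability as set up in \cite{KSZ}, not literally the antidominant-character definition recalled in \S 3.2 of this paper, so that equivalence enters as a standard input to, rather than a consequence of, your argument.
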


\begin{proof}
See Proposition 5.4 in \cite{KSZ}.
\end{proof}

\begin{definition}
 The parabolic $\text{Sp}\left( 4,\mathbb{R} \right)$-Higgs bundles with parabolic Toledo invariant $\tau =2g-2+s$ will be called \emph{maximal} and we will denote the components containing such triples by $\mathsf{\mathcal{M}}_{par}^{\max }:=\mathsf{\mathcal{M}}_{par}^{2g-2+s}$.
\end{definition}

In \cite{KSZ} a component count for the moduli space $\mathsf{\mathcal{M}}_{par}^{\max }$ was obtained. Note that maximal parabolic  $\text{Sp}\left( 2n,\mathbb{R} \right)$-Higgs bundles can be considered for more general choices of weights, since the proof for the maximality of the Toledo invariant does not depend on the parabolic structure. A component count in these more general cases can be found in \cite{KSZ2}.

\section{Approximate solutions by gluing}

In this section we develop a gluing construction for solutions to the $\text{Sp(4}\text{,}\mathbb{R}\text{)}$-Hitchin equations over a connected sum of Riemann surfaces to produce an approximate solution to the equations. The necessary condition in order to combine the initial parabolic data over the connected sum operation is that this data is identified over annuli around the points in the divisors of the Riemann surfaces. Aiming to provide new model Higgs bundles in the exceptional components of ${{\mathsf{\mathcal{M}}}^{\max }}$, we consider parabolic data which around the punctures are \emph{a priori not identified}, but we will rather seek for deformations of those into model solutions of the Hitchin equations which will allow us to combine data over the complex connected sum. This deformation argument is coming from deformations of $\text{SL(2}\text{,}\mathbb{R}\text{)}$-solutions to the Hitchin equations over a punctured surface and subsequently we extend this for $\text{Sp(4}\text{,}\mathbb{R}\text{)}$-pairs using appropriate embeddings $\phi: \text{SL(2}\text{,}\mathbb{R}\text{)}\hookrightarrow \text{Sp(4}\text{,}\mathbb{R}\text{)}$. Therefore, our gluing construction involves parabolic $\text{Sp(4}\text{,}\mathbb{R}\text{)}$-pairs which arise from $\text{SL(2}\text{,}\mathbb{R}\text{)}$-pairs via extensions by such embeddings.

\subsection{The local model}\label{section_local_model}
Similarly to the non-parabolic case, the moduli space of stable parabolic Higgs bundles can be identified with the moduli space of solutions to the parabolic version of the Hitchin equations:
\begin{align}\label{Hit_eq_par}
{{F}_{A}^{\perp}}+\left[ \Phi , \Phi^{*} \right] &= 0  \\
{{\bar{\partial }}_{A}}\varphi &= 0, 
\end{align}
where ${{F}_{A}^{\perp}}$ is the trace-free part of the curvature of a connection $A$ which is a singular connection unitary with respect to a singular hermitian metric on a parabolic bundle adapted to the parabolic structure (see \S 3.5 of \cite{Mochizuki} for a detailed explanation). In the Corlette-Donaldson part of the non-abelian Hodge correspondence, the local monodromy of the associated flat connection $A+\Phi + \Phi^{*}$ around a point $x$ in the divisor $D$ is determined up to conjugacy by the parabolic weights and the eigenvalues of the residues of the Higgs field. 

Let $E$ be a smooth rank 2 parabolic bundle equipped with a full flag
\[{{E}_{x}}={{E}_{x,1}}\supset {{E}_{x,2}}\supset \left\{ 0 \right\}\]
and a pair of weights $\left( \alpha_{1}, \alpha_{2} \right)$. Fix $U$ a neighborhood of the parabolic point $x$ and let $z$ be a holomorphic local coordinate on $U$ with $z \left( x \right) = 0$. Let $ \left\{ e_{1}, e_{2} \right\}$ be a smooth frame on $E{\big|}_U$. Then, the singular hermitian metric 
\[h=\left( \begin{matrix}
   {{\left| z  \right|}^{2\alpha_{1}}}  & 0  \\
   0 & {{\left| z  \right|}^{2\alpha_{2}}}  \\
\end{matrix} \right)\]
is adapted to the parabolic structure. Moreover, with respect to the unitary frame 
\[\left\{ {{\left| z  \right|}^{-\alpha_{1}}} e_{1}, {{\left| z  \right|}^{-\alpha_{2}}} e_{2} \right\}\]
 the associated singular Chern connection is given by 
\[D_{h} = d+ \left( \begin{matrix}
   \alpha_{1} & 0  \\
   0 & \alpha_{2}  \\
\end{matrix} \right) i d\theta,\] 
where we write $z=re^{i\theta}$; note that $d\theta$ has a pole at the origin.

For fixed constants $\alpha \in \mathbb{R}$ and $C \in \mathbb{C}$, the pair
\[{{A}}= \left( \begin{matrix}
   \alpha & 0  \\
   0 & -\alpha  \\
\end{matrix} \right) \left( \frac{dz}{z} - \frac{d\bar{z}}{\bar{z}} \right)    ,\text{   }{{\Phi }}=\left( \begin{matrix}
   C & 0  \\
   0 & -C  \\
\end{matrix} \right)\frac{dz}{z}\]
describes a solution of the equations (4.1), (4.2) on $\mathbb{C}^{*}$ (cf. \S 2.3 in \cite{Swoboda}, where this model is used to study the moduli space of solutions to the Hitchin equations under a degeneration of a smooth Riemann surface to a nodal Riemann surface). For our purposes in this article, we will rather use the model pair for constants $\alpha = 0$ and $C \in \mathbb{R}^{+}$ providing a model solution, for which the local monodromy of the associated flat connection around the point $x \in D$ lies in $\text{SL} \left( 2, \mathbb{R} \right)$ (cf. \S 2.5 in \cite{Swoboda} and the references therein, where such a model is obtained by studying the behavior of the harmonic map between a surface $X$ with a given complex structure and the surface $X$ with the corresponding Riemannian metric of constant curvature -4, under degeneration of the domain Riemann surface $X$ to a nodal surface).

Thus, the \emph{model solution} to the $\text{SL(2}\text{,}\mathbb{R}\text{)}$-Hitchin equations we will be considering is described by
	\[{{A}^{\bmod }}=0,\text{   }{{\Phi }^{\bmod }}=\left( \begin{matrix}
   C & 0  \\
   0 & -C  \\
\end{matrix} \right)\frac{dz}{z}\]
over a punctured disk with $z$-coordinates around the puncture with the condition that $C\in \mathbb{R}$ with $C\ne 0$. Note that this pair is described only by the meromorphic quadratic differential $q:= \text{det} \Phi^{\text{mod}} = -C^{2} z^{-2}dz^{2}$ having a double pole at the point $x \in D$; we assume that $q$ has at least one simple zero-that this is indeed the generic case, is discussed in \cite{MSWW}. Lastly, we refer the reader to \S 3 of \cite{Lauraetal} for more examples of model solutions in this parabolic setting. 

\subsection{Weighted Sobolev spaces}\label{Sobolev}
In order to develop the necessary analytic arguments for the gluing construction later on, we need to introduce appropriate Sobolev spaces. Let $X$ be a compact Riemann surface and $D:=\left\{ {{p}_{1}},\ldots ,{{p}_{s}} \right\}$ be a collection of $s$-many distinct points on $X$. Moreover, let $\left( E,h \right)$ be a hermitian vector bundle on $E$. Choose an initial pair $\left( {{A}^{\bmod }},{{\Phi }^{\bmod }} \right)$ on $E$, such that in some unitary trivialization of $E$ around each point $p\in D$, the pair coincides with the local model from \S \ref{section_local_model}. Of course, on the interior of each region $X\backslash \left\{ {{p}} \right\}$ the pair $\left( {{A}^{\bmod }},{{\Phi }^{\bmod }} \right)$ need not satisfy the Hitchin equations.

For fixed local coordinates $z$ centered at each $p\in D$, let $r = \left| z \right|$ be the distance function from $p$. Using the measure ${{r}}drd\theta $ and a fixed weight $\delta >0$ define \emph{weighted ${{L}^{2}}$-based Sobolev spaces}
	\[L_{\delta }^{2}:=\left\{ f\in {{L}^{2}}\left( rdrd\theta  \right)\left| r^{- \delta -1}f \in {{L}^{2}}\left( rdrd\theta  \right) \right. \right\}\] and
\[H_{\delta }^{k}:=\left\{ u,{{\nabla }^{j}}u\in L_{\delta }^{2}\left( rdrd\theta \right),0\le j\le k \right\}.\]
We are interested in deformations of a connection $A$ and a Higgs field $\Phi$ such that the curvature of the connection $D=A+\Phi +{{\Phi }^{*}}$ remains $O\left( {{r}^{-2+\delta }} \right)$, that is, slightly better than ${{L}^{1}}$. We can then define \emph{global Sobolev spaces on $X$} as the spaces of admissible deformations of the model unitary connection ${{A}^{\bmod }}$ and the model Higgs field ${{\Phi }^{\bmod }}$ as
	\[\mathsf{\mathcal{A}}:=\mathsf{}\left\{ {{A}^{\bmod }}+\alpha \left| \alpha \in H_{-2+\delta }^{1}\left( {{\Omega }^{1}}\otimes \mathfrak{su}\left( E \right) \right) \right. \right\}\]
	and \[\mathsf{\mathcal{B}}:=\left\{ {{\Phi }^{\bmod }}+\varphi \left| \varphi \in H_{-2+\delta }^{1}\left( {{\Omega }^{1,0}}\otimes \text{End}\left( E \right) \right) \right. \right\}.\]
The space of unitary gauge transformations \[\mathsf{\mathcal{G}}=\left\{ g\in \text{U}\left( E \right),\,{{g}^{-1}}dg\in H_{-2+\delta }^{1}\left( {{\Omega }^{1}}\otimes \mathfrak{su}\left( E \right) \right) \right\}\] acts smoothly on $\mathsf{\mathcal{A}}$ and $\mathsf{\mathcal{B}}$ by
 \[{{g}^{*}}\left( A,\Phi  \right)=\left( {{g}^{-1}}Ag+{{g}^{-1}}dg,{{g}^{-1}}\Phi g \right),\] for a pair $\left( A,\Phi  \right)\in \mathsf{\mathcal{A}}\times \mathsf{\mathcal{B}}$.

These considerations allow us to introduce the moduli space of solutions which are close to the model solution over a punctured Riemann surface ${{X}^{\times }}:=X-D$ for some fixed parameter $C\in \mathbb{R}$:
	\[\mathsf{\mathcal{M}}\left( {{X}^{\times }} \right):=\frac{\left\{ \left( A,\Phi  \right)\in \mathsf{\mathcal{A}}\times \mathsf{\mathcal{B}}\left| \left( A,\Phi  \right)\text{ satisfies (4.1) and (4.2)} \right. \right\}}{\mathsf{\mathcal{G}}}.\]
This moduli space was explicitly constructed by H. Konno in \cite{Konno} as a hyperk{\"a}hler quotient, and is identified with the moduli space of stable parabolic Higgs bundles (see \cite{Simpson-noncompact}).

\subsection{Approximate solutions of the $\text{SL(2}\text{,}\mathbb{R}\text{)}$-Hitchin equations}\label{approximate_SL2}
In \S \ref{Sobolev} we saw that a point in the moduli space $\mathsf{\mathcal{M}}\left( {{X}^{\times }} \right)$ differs from a model pair $\left( {{A}^{\bmod }},{{\Phi }^{\bmod }} \right)$ by some element in $H_{-2+\delta }^{1}$. The following result by O. Biquard and P. Boalch shows that $\left( A,\Phi  \right)$ is asymptotically close to the model in a much stronger sense:

\begin{lemma}[Lemma 5.3 in \cite{BiBo}]\label{Biq_Bo_lemma}
For each point $p\in D$, let $\left( {{A}_{p}^{\bmod }},{{\Phi }_{p}^{\bmod }} \right)$ be a model pair as was defined in \S \ref{section_local_model}. If $\left( A,\Phi  \right)\in \mathsf{\mathcal{M}}\left( {{X}^{\times }} \right)$, then there exists a unitary gauge transformation $g\in \mathsf{\mathcal{G}}$ such that in a neighborhood of each point $p\in D$ one has
	\[{{g}^{*}}\left( A,\Phi  \right)=\left( {{A}^{\bmod }_{p}},{{\Phi }^{\bmod }_{p}} \right)+O\left( {{r}^{-1+\delta }} \right),\]
for a positive constant $\delta$.
\end{lemma}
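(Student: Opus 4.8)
The plan is to localize near a single puncture, recast the Hitchin equations as a nonlinear elliptic perturbation of the model in a Coulomb-type gauge, and then combine an indicial-root analysis on the cylinder with an elliptic bootstrap, choosing $g$ so that it absorbs precisely the slowly-decaying leading terms. Since the conclusion is local and the points of $D$ are disjoint, it suffices to argue on a punctured coordinate disk $\Delta_p^\times$ around a single $p\in D$; write $u=(a,\varphi):=(A,\Phi)-(A_p^{\bmod},\Phi_p^{\bmod})$, which by \S4.2 lies in $H^1_{-2+\delta}$. I would first record that on $\Delta_p^\times$ the model pair is itself an exact solution of the Hitchin equations: because $\Phi_p^{\bmod}=\mathrm{diag}(C,-C)\,dz/z$ is diagonal with $C\in\mathbb{R}$, its Lie-algebra part commutes with that of $(\Phi_p^{\bmod})^{*}$, so $[\Phi_p^{\bmod},(\Phi_p^{\bmod})^{*}]=0$, while $A_p^{\bmod}=0$ is flat and $\Phi_p^{\bmod}$ is holomorphic off $p$. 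Subtracting the two copies of the equations then yields a homogeneous system $\mathcal{L}_p\,u+Q(u)=0$, where $\mathcal{L}_p$ is the linearization of the Hitchin operator at $(A_p^{\bmod},\Phi_p^{\bmod})$ augmented by the linearized Coulomb gauge condition, so as to be elliptic of Dirac type, and $Q$ is pointwise quadratic in $u$.

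Second comes the model analysis by separation of variables. Passing to the cylindrical coordinate $\zeta=t+i\theta$ via $z=e^{-\zeta}$ turns $\Delta_p^\times$ into a half-cylinder, sends the weight $r^{\gamma}$ to $e^{-\gamma t}$, and, since $\Phi_p^{\bmod}$ is translation invariant in $t$, renders $\mathcal{L}_p$ asymptotically constant-coefficient, of the form $\partial_t+B$ with $B$ a self-adjoint operator over $S^1$ determined by the residue $\mathrm{diag}(C,-C)$. Its indicial roots, the rates $\gamma$ for which $\mathcal{L}_p(e^{-\gamma t}\psi)=0$ has a nonzero angular mode $\psi$, are then explicitly computable from $C$. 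The Fredholm and expansion theory for such operators, as developed for Higgs bundles in \cite{MSWW} and applied in \cite{BiBo}, guarantees that a solution $u\in H^1_{-2+\delta}$ of $\mathcal{L}_p u=f$ with $f$ decaying faster than the target rate admits a partial asymptotic expansion $u=\sum_j c_j\,r^{\gamma_j}\psi_j+O(r^{-1+\delta})$, the sum running over the indicial roots $\gamma_j$ lying between the a priori rate $-2+\delta$ and the target rate $-1+\delta$.

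Third is the gauge removal and the bootstrap. The quadratic term $Q(u)$ is quadratic in a quantity that already tends to zero, hence decays strictly faster than the linear terms and cannot obstruct the leading expansion; feeding the a priori decay back through $\mathcal{L}_p u=-Q(u)$ produces the expansion above. The crucial structural fact is that the indicial modes in the relevant window are precisely those generated by the infinitesimal action of the constant unitary gauge transformations on the model data. I would therefore choose $g\in\mathcal{G}$ so that $g^{*}$ cancels exactly these leading coefficients $c_j$, leaving $g^{*}(A,\Phi)-(A_p^{\bmod},\Phi_p^{\bmod})=O(r^{-1+\delta})$; elliptic regularity for $\mathcal{L}_p$ in this good gauge then upgrades the weighted-Sobolev bound to the asserted pointwise decay, iterating up the Sobolev scale to overcome the borderline two-dimensional embedding.

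The main obstacle is the indicial bookkeeping together with the genericity of $\delta$: one must diagonalize the angular operator $B$ explicitly and verify that $\delta>0$ can be taken small and generic so that the only indicial roots in the open window $(-2+\delta,-1+\delta)$ are the pure-gauge ones removable by $g$, while simultaneously confirming that $Q(u)$ genuinely decays past the target rate so that the nonlinear iteration closes. This is exactly the delicate analysis carried out in \cite{BiBo}, and in the write-up I would invoke their indicial and Fredholm results rather than reconstruct the full package, adapting them to the present model pair $(A_p^{\bmod},\Phi_p^{\bmod})$.
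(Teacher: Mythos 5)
The first thing to note is that the paper contains no proof of this statement: Lemma 4.1 is imported verbatim from Biquard--Boalch (Lemma 5.3 of \cite{BiBo}) and is used as a black box, so your concluding decision to ``invoke their indicial and Fredholm results rather than reconstruct the full package'' is exactly the paper's own treatment. Your reconstruction of what sits inside the black box also has the right overall architecture, and several of your preliminary observations are correct: the model pair is indeed an exact solution on the punctured disk (since $A_p^{\bmod}=0$ is flat, $\Phi_p^{\bmod}$ is holomorphic off $p$, and $[\Phi_p^{\bmod},(\Phi_p^{\bmod})^{*}]=0$ for a diagonal residue with $C\in\mathbb{R}$), the difference satisfies a gauge-fixed elliptic system with quadratic nonlinearity, and the substitution $z=e^{-\zeta}$ turns the linearization into a translation-invariant operator on a half-cylinder whose indicial roots are computable from $C$.

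There is, however, a genuine gap in the step you label the ``crucial structural fact.'' It is not true that the indicial modes in the relevant window are precisely those generated by the infinitesimal action of constant unitary gauge transformations, and the mechanism ``choose $g$ to cancel the leading coefficients $c_j$'' cannot work for this model. A constant diagonal $\gamma$ satisfies $d\gamma=0$ \emph{and} $[\Phi_p^{\bmod},\gamma]=0$, i.e.\ it lies in the stabilizer of the model and moves nothing; conversely, the translation-invariant modes that do survive gauge fixing are the constant \emph{diagonal} deformations $(\dot A,\dot\Phi)$, which change the residue parameter $C$ or add diagonal unitary holonomy --- genuine deformations of the model data, not tangent to the gauge orbit. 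This is visible in the paper's own Fourier computation (Proposition 6.1, carried out for the $\mathrm{Sp}(4,\mathbb{R})$ extension of the same model): only the $j=0$ diagonal modes survive, while every off-diagonal mode --- in particular every mode of the form $(0,[\Phi_p^{\bmod},\gamma])$ produced by a constant off-diagonal unitary $\gamma$ --- is excluded because $(j/2)^2+4C^2>0$. Consequently no choice of $g\in\mathcal{G}$ can remove a nonzero coefficient on these modes; if one appeared, the solution would simply be asymptotic to a \emph{different} model. What actually rules them out is the hypothesis $(A,\Phi)\in\mathcal{M}(X^{\times})$: the deformation is required a priori to lie in the weighted Sobolev space built around the \emph{fixed} model (fixed $C$ and fixed flat structure), and this excludes the non-decaying model-deformation directions. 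The unitary gauge transformation's real job in the argument is Coulomb-type gauge fixing near the puncture, so that the weighted elliptic estimates and the quadratic bootstrap apply --- not term-by-term cancellation of an indicial expansion. As written, your proof stalls exactly at the verification you defer to the end; the repair is to reassign the roles (a priori weighted membership kills the non-gauge indicial modes, gauge fixing plus bootstrap does the rest), which is how the analysis in \cite{BiBo} is organized.
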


The decay described in this lemma can be further improved by showing that in a suitable complex gauge transformation the point $\left( A,\Phi  \right)$ coincides precisely with the model near each puncture in $D$. With respect to the \emph{singular measure} ${{r}^{-1}}drd\theta $ on $\mathbb{C}$, we first introduce the Hilbert spaces
	\[L_{-1+\delta }^{2}\left( {{r}^{-1}}drd\theta  \right):=\left\{ u\in {{L}^{2}}\left( \mathbb{D} \right)\left| {{r}^{-\delta }}u\in {{L}^{2}}\left( {{r}^{-1}}drd\theta  \right) \right. \right\},\]
	\[H_{-1+\delta }^{k}\left( {{r}^{-1}}drd\theta  \right):=\left\{ u\in {{L}^{2}}\left( \mathbb{D} \right)\left| {{\left( r\partial r \right)}^{j}}\partial _{\theta }^{l}u\in L_{-1+\delta }^{2}\left( {{r}^{-1}}drd\theta  \right),0\le j+l\le k \right. \right\}\]
for $\mathbb{D}=\left\{ z\in \mathbb{C}\left| 0<\left| z \right|<1 \right. \right\}$ the punctured unit disk.
We then have the following result by J. Swoboda:

\begin{lemma}[Lemma 3.2 in \cite{Swoboda}]\label{complex_gauge_to_model}
Let $\left( A,\Phi  \right)\in \mathsf{\mathcal{M}}\left( {{X}^{\times }} \right)$ and let $\delta$ be the constant provided by Lemma \ref{Biq_Bo_lemma}. Fix another constant $0<{\delta }'<\min \left\{ \frac{1}{2},\delta  \right\}$. Then there is a complex gauge transformation $g=\exp \left( \gamma  \right)\in {{\mathsf{\mathcal{G}}}^{\mathbb{C}}}$ with $\gamma \in H_{-1+{\delta }'}^{2}\left( {{r}^{-1}}drd\theta  \right)$, such that ${{g}^{*}}\left( A,\Phi  \right)$ coincides with $\left( {{A}_{p}^{\bmod }},{{\Phi }_{p}^{\bmod }} \right)$ on a sufficiently small neighborhood of the point $p$, for each $p\in D$.
\end{lemma}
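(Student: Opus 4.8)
The plan is to recast the matching condition $g^{*}\left( A,\Phi  \right)=\left( A_{p}^{\bmod },\Phi _{p}^{\bmod } \right)$ as a nonlinear elliptic equation for the generator $\gamma$ of the complex gauge transformation, and to produce $\gamma$ by a Banach fixed point argument in the weighted space $H_{-1+{\delta }'}^{2}\left( {{r}^{-1}}drd\vartheta  \right)$. The statement is purely local around each puncture, so first I would fix one $p\in D$, work on a punctured disk $\mathbb{D}$ with coordinate $z$, and invoke Lemma 4.1 to arrange (after the initial unitary gauge transformation, absorbed into the notation) that $\left( A,\Phi  \right)=\left( A_{p}^{\bmod },\Phi _{p}^{\bmod } \right)+\left( a,\varphi  \right)$ with $\left( a,\varphi  \right)=O\left( {{r}^{-1+\delta }} \right)$. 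The structural input I would exploit is that \emph{both} $\left( A,\Phi  \right)$ and the model solve the $\text{SL(2}\text{,}\mathbb{R}\text{)}$-Hitchin equations on $\mathbb{D}$, which is what makes the linearized problem well behaved.

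Writing $g=\exp\left( \gamma  \right)$ with $\gamma$ a section of $E\left( \mathfrak{g}^{\mathbb{C}} \right)$ and expanding the complex gauge action, the requirement $g^{*}\left( A,\Phi  \right)=\left( A_{p}^{\bmod },\Phi _{p}^{\bmod } \right)$ takes the form
\[\mathsf{\mathcal{L}}\gamma +\mathsf{\mathcal{Q}}\left( \gamma  \right)=-\left( a,\varphi  \right),\]
where $\mathsf{\mathcal{L}}$ is the linearization of the gauge action at $\gamma =0$ and $\mathsf{\mathcal{Q}}$ collects the quadratic and higher order remainder. The operator $\mathsf{\mathcal{L}}$ is a second order elliptic operator of Laplace type whose model at $p$, in the cylindrical coordinate $t=-\log r$, is the flat Laplacian $\partial_{t}^{2}+\partial_{\vartheta}^{2}$ augmented by the commutator action $\left[ \Phi_{p}^{\bmod },\left[ \left( \Phi_{p}^{\bmod } \right)^{*},\cdot  \right] \right]$ of the diagonal model Higgs field. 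The latter is a positive mass term on the off-diagonal part of $\gamma$, shifting its indicial roots to $\pm\sqrt{n^{2}+4C^{2}}$ and forcing those components to decay faster, whereas the diagonal (scalar) part is governed by the bare Laplacian with indicial roots the integers $\pm n$.

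The heart of the argument is to show that $\mathsf{\mathcal{L}}$ is an isomorphism
\[\mathsf{\mathcal{L}}:H_{-1+{\delta }'}^{2}\left( {{r}^{-1}}drd\vartheta  \right)\to L_{-1+{\delta }'}^{2}\left( {{r}^{-1}}drd\vartheta  \right).\]
This is a weighted elliptic theory statement: I would verify that the chosen weight window $0<{\delta }'<\min\left\{ \tfrac{1}{2},\delta  \right\}$ avoids every indicial root of $\mathsf{\mathcal{L}}$, so that the operator is Fredholm with trivial kernel and cokernel on these spaces. The lower bound ${\delta }'>0$ excludes the double root at the origin carried by the scalar $n=0$ mode, the upper cutoff $\tfrac{1}{2}$ keeps ${\delta }'$ strictly below the nearest positive root, and the constraint ${\delta }'<\delta$ guarantees that the inhomogeneity $-\left( a,\varphi  \right)=O\left( {{r}^{-1+\delta }} \right)$ lies in the target space with room to spare. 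I expect this to be the main obstacle, since it is where the precise asymptotics of the model and the interaction with the Higgs field must be controlled exactly rather than merely estimated.

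With $\mathsf{\mathcal{L}}$ invertible I would rewrite the equation as the fixed point problem $\gamma =-\mathsf{\mathcal{L}}^{-1}\left( \left( a,\varphi  \right)+\mathsf{\mathcal{Q}}\left( \gamma  \right) \right)$ and apply the Banach fixed point theorem on a small ball in $H_{-1+{\delta }'}^{2}\left( {{r}^{-1}}drd\vartheta  \right)$: the bound on $\mathsf{\mathcal{L}}^{-1}$, together with the product estimates for the weighted Sobolev spaces (which close because $2{\delta }'>{\delta }'$), shows that the right hand side maps a small ball into itself and contracts there, yielding a unique small $\gamma$. Elliptic regularity bootstraps $\gamma$ to the claimed space, and since $\gamma$ decays at $p$ one may interpolate it against $0$ on an annulus to obtain a genuine global element $g=\exp\left( \gamma  \right)\in {{\mathsf{\mathcal{G}}}^{\mathbb{C}}}$ that coincides with the model on a smaller disk around each $p$, as required.
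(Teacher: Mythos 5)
The paper offers no proof of this lemma at all: it is imported verbatim from Swoboda (Lemma~3.2 of \cite{Swoboda}), whose argument is a \emph{holomorphic normal form} argument, not a contraction-mapping argument, so your proposal has to stand on its own. It does not, for a structural reason. The matching condition $g^{*}\left( A,\Phi  \right)=\left( A_{p}^{\bmod },\Phi _{p}^{\bmod } \right)$ is an \emph{overdetermined} system for $\gamma$: the unknown is a section of $E\left( \mathfrak{sl}\left( 2,\mathbb{C} \right) \right)$, while the equations require matching both the connection and the Higgs field. Its linearization is the \emph{first-order} operator $\gamma \mapsto \left( \bar{\partial }_{A}\gamma -\partial _{A}\gamma^{*},\left[ \Phi ,\gamma  \right] \right)$ from $\Omega ^{0}$ into $\Omega ^{1}\oplus \Omega ^{1,0}$ (this is the map $\Lambda _{\left( A,\Phi  \right)}$ of \S 5.2), which can never be an isomorphism onto a weighted $L^{2}$ space of the target. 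The second-order ``Laplacian plus mass term'' operator you analyze, with indicial roots $\pm \sqrt{n^{2}+4C^{2}}$ on the off-diagonal part, is the linearized Hitchin operator $L_{\left( A,\Phi  \right)}$ of \S 5.2 and \S 6; it governs a \emph{different} problem (making a pair satisfy the Hitchin equations within its complex gauge orbit, used in \S 7 to correct approximate solutions), not the problem of making a pair equal to a prescribed model.

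The cleanest way to see that no fixed-point iteration in $\gamma$ alone can prove this lemma: the characteristic polynomial of $\Phi$ is invariant under every complex gauge transformation, so $g^{-1}\Phi g=\Phi _{p}^{\bmod }$ forces $\det \Phi =-C^{2}\left( dz/z \right)^{2}$ \emph{identically} near $p$. But the hypotheses (Lemma 4.1) only give $\det \Phi +C^{2}\left( dz/z \right)^{2}=O\left( r^{-2+\delta } \right)$, i.e. $\det \Phi =\left( -C^{2}+z\,H\left( z \right) \right)dz^{2}/z^{2}$ with $H$ holomorphic and in general not identically zero. This discrepancy is gauge-invariant, so it lies in the cokernel of your scheme no matter how small the ball: smallness of the perturbation $\left( a,\varphi  \right)$ cannot make it vanish. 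Removing it is exactly what the actual proof does first, and it is where the hypotheses $\bar{\partial }_{A}\Phi =0$ and $C\ne 0$ are used structurally rather than cosmetically: one trivializes $E$ holomorphically on the disk, diagonalizes $\Phi$ holomorphically (possible because the residue has distinct eigenvalues $\pm C$), and normalizes the holomorphic data so that the quadratic differential becomes exactly the model one; only then is one left with an abelian (diagonal) $\bar{\partial }$-problem with weighted estimates, which is where the window $0<\delta '<\min \left\{ \tfrac{1}{2},\delta  \right\}$ and the decay of Lemma 4.1 genuinely enter, and where a Banach-space estimate of the kind you describe is legitimate. That normal-form step is the real content of the lemma, and its absence in your proposal is not repairable within the contraction scheme you set up.
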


We shall now use this complex gauge transformation as well as a smooth cut-off function to obtain an approximate solution to the $\text{SL(2}\text{,}\mathbb{R}\text{)}$-Hitchin equations. For the fixed local coordinates $z$ around each puncture $p$ and the positive function $r = \left| z \right|$ around the puncture, fix a constant $0<R<1$ and choose a smooth cut-off function ${{\chi }_{R}}:\left[ 0,\infty  \right)\to \left[ 0,1 \right]$ with $\text{supp}\chi \subseteq \left[ 0,R \right]$ and ${{\chi }_{R}}\left( r \right)=1$ for $r\le \frac{3R}{4}$. We impose the further requirement on the growth rate of this cut-off function:
\begin{equation}\label{growth_rate}
  \left| r{{\partial }_{r}}{{\chi }_{R}} \right|+\left| {{\left( r\partial r \right)}^{2}}{{\chi }_{R}} \right|\le k,
\end{equation}
for some constant $k$ not depending on $R$.

The map $x\mapsto {{\chi }_{R}}\left( r\left( x \right) \right):{{X}^{\times }}\to \mathbb{R}$ gives rise to a smooth cut-off function on the punctured surface ${{X}^{\times }}$ which by a slight abuse of notation we shall still denote by ${{\chi }_{R}}$. We may use this function ${{\chi }_{R}}$ to glue the two pairs $\left( A,\Phi  \right)$ and $\left( A_{p}^{\bmod },\Phi _{p}^{\bmod } \right)$ into an \emph{approximate solution}
	\[\left( A_{R}^{app},\Phi _{R}^{app} \right):=\exp {{\left( {{\chi }_{R}}\gamma  \right)}^{*}}\left( A,\Phi  \right).\]
The pair $\left( A_{R}^{app},\Phi _{R}^{app} \right)$ is a smooth pair and is by construction an exact solution of the Hitchin equations away from each punctured neighborhood ${{\mathsf{\mathcal{U}}}_{p}}$, while it coincides with the model pair $\left( A_{p}^{\bmod },\Phi _{p}^{\bmod } \right)$ near each puncture. More precisely, we have:
\[\left( A_{R}^{app},\Phi _{R}^{app} \right)=\left\{ \begin{matrix}
   \left( A,\Phi  \right),\text{ over }X\backslash \bigcup\limits_{p\in D}{\left\{ z\in {{\mathsf{\mathcal{U}}}_{p}}\left| \frac{3R}{4}\le \left| z \right|\le R \right. \right\}}  \\
   \left( A_{p}^{\bmod },\Phi _{p}^{\bmod } \right),\text{ over }\left\{ z\in {{\mathsf{\mathcal{U}}}_{p}}\left| 0<\left| z \right|\le \frac{3R}{4} \right. \right\}\text{, for each }p\in D.  \\
\end{matrix} \right.\]

\vspace{5mm}
\begin{center}
 \includegraphics[width=0.6\linewidth,height=0.6\textheight,keepaspectratio]{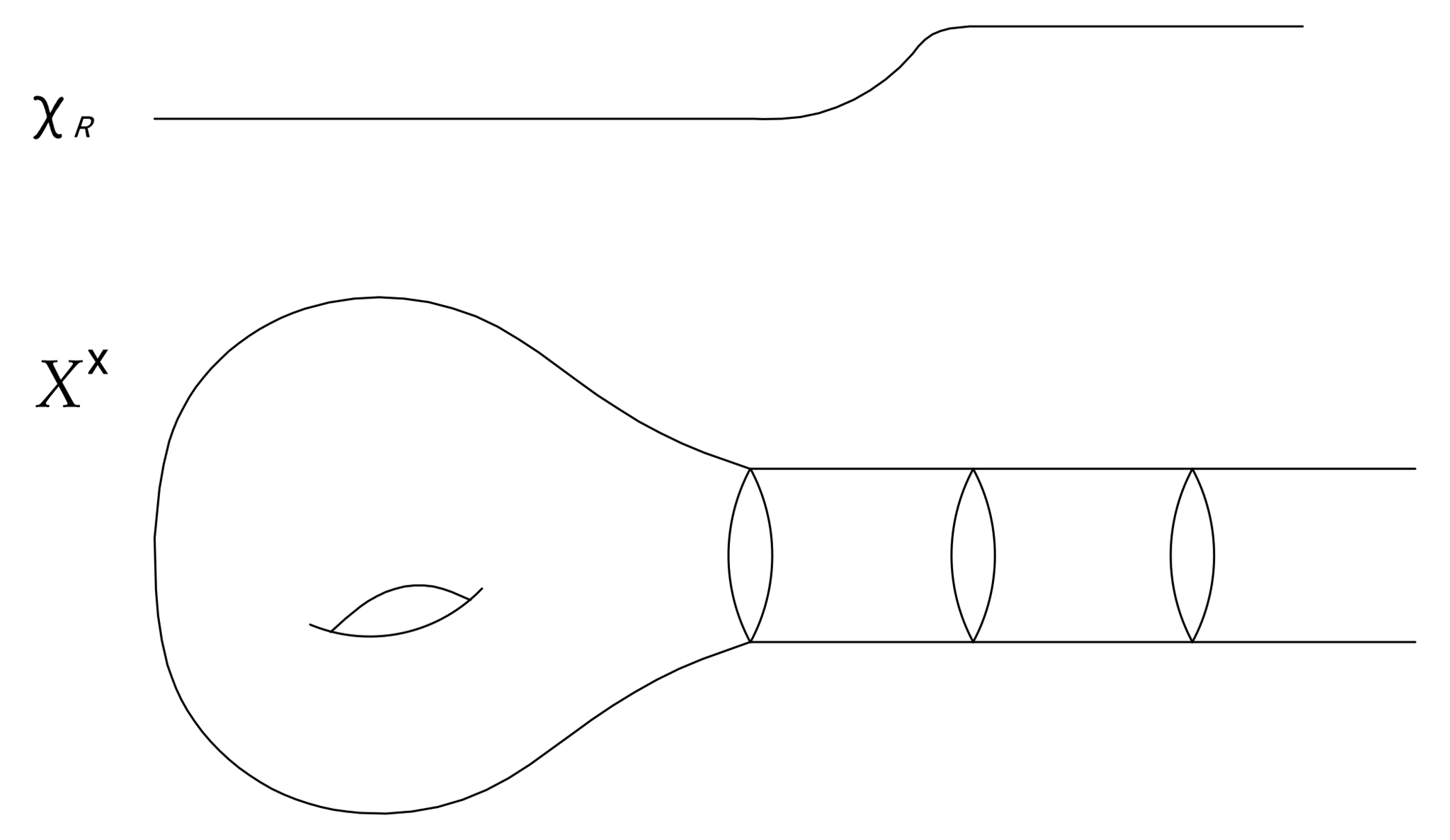}
    \captionof{figure}{Constructing an approximate solution over the punctured surface ${{X}^{\times }}$.}
\end{center}
\vspace{5mm}

Since $\left( A_{R}^{app},\Phi _{R}^{app} \right)$ is complex gauge equivalent to an exact solution $\left( A,\Phi  \right)$ of the Hitchin equations, it does still \emph{satisfy the second equation}, in other words, it holds that ${{\bar{\partial }}_{A_{R}^{app}}}\Phi _{R}^{app}=0$.\\
Indeed, for $\tilde{g}:=\exp \left( {{\chi }_{R}}\gamma  \right)$, we defined $\left( A_{R}^{app},\Phi _{R}^{app} \right)={{\tilde{g}}^{*}}\left( A,\Phi  \right)=\left( {{{\tilde{g}}}^{-1}}A\tilde{g}+{{{\tilde{g}}}^{-1}}d\tilde{g},{{{\tilde{g}}}^{-1}}\Phi \tilde{g} \right)$ and $\left( A,\Phi  \right)$ is an exact solution, thus in particular \[0={{\bar{\partial }}_{A}}\Phi =\bar{\partial }\Phi +\left[ {{A}^{0,1}}, \Phi  \right].\]

Moreover, Lemma \ref{complex_gauge_to_model} and assumption (\ref{growth_rate}) on the growth rate of the bump function ${{\chi }_{R}}$ provide us with a good estimate of the error up to which $\left( A_{R}^{app},\Phi _{R}^{app} \right)$ satisfies the first equation:

\begin{lemma}\label{curvature_estimate}Let ${\delta }'>0$ be as in Lemma \ref{complex_gauge_to_model} and fix some further constant $0<{\delta }''<{\delta }'$. The approximate solution $\left( A_{R}^{app},\Phi _{R}^{app} \right)$ to the parameter $0<R<1$ satisfies the inequality
	\[{{\left\| *F_{A_{R}^{app}}+*\left[ \Phi _{R}^{app}, {{\left( \Phi _{R}^{app} \right)}^{*}} \right] \right\|}_{{{C}^{0}}\left( {{X}^{\times }} \right)}}\le k{{R}^{{{\delta }''}}}\]
for some constant $k=k\left( {\delta }',{\delta }'' \right)$ which does not depend on $R$.
\end{lemma}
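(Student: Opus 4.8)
The plan is to localize the error to the gluing annuli $\mathcal{A}_R=\left\{3R/4\le r\le R\right\}$ around each puncture and then to Taylor-expand the first Hitchin operator in the (small) complex gauge parameter. First I would observe that both pieces being glued are \emph{exact} solutions of the first equation: the pair $\left(A,\Phi\right)$ solves it by hypothesis, while the local model of \S4.1 also solves it, since $A^{\bmod}=0$ gives $F_{A^{\bmod}}=0$ and the real diagonal matrix $\mathrm{diag}\left(C,-C\right)$ commutes with its adjoint, so $\left[\Phi^{\bmod}\wedge\left(\Phi^{\bmod}\right)^{*}\right]=0$. Because $\left(A_R^{app},\Phi_R^{app}\right)=\exp\left(\chi_R\gamma\right)^{*}\left(A,\Phi\right)$ equals $\left(A,\Phi\right)$ where $\chi_R=0$ (i.e.\ $r\ge R$) and equals $\left(A_p^{\bmod},\Phi_p^{\bmod}\right)$ where $\chi_R=1$ (i.e.\ $r\le 3R/4$, using Lemma 4.2 for $R$ small enough that this disk lies in the neighbourhood produced there), the integrand $*F_{A_R^{app}}^{\bot}+*\left[\Phi_R^{app}\wedge\left(\Phi_R^{app}\right)^{*}\right]$ vanishes identically off $\bigcup_{p}\mathcal{A}_R$. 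Thus it suffices to bound its $C^{0}$ norm on each annulus.

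On $\mathcal{A}_R$ I would write the first Hitchin operator as a smooth map $\mu$ and use the exactness $\mu\left(A,\Phi\right)=0$ to get $\mu\left(A_R^{app}\right)=\mu\left(\exp\left(\chi_R\gamma\right)^{*}\left(A,\Phi\right)\right)-\mu\left(A,\Phi\right)$. Expanding in the gauge parameter $\sigma:=\chi_R\gamma$ gives $\mu\left(A_R^{app}\right)=\mathcal{L}_{\left(A,\Phi\right)}\left(\sigma\right)+Q\left(\sigma\right)$, where $\mathcal{L}$ is the linearization (second order in $\sigma$, of the schematic form $\Delta_A\sigma$ together with curvature terms and $\left[\Phi,\left[\Phi^{*},\sigma\right]\right]$-type commutators) and $Q$ collects the higher-order contributions. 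The key structural point is that every summand carries at least one factor of $\sigma$ or of a covariant derivative of $\sigma$ — there is no $\sigma$-independent term, precisely because $\mu\left(A,\Phi\right)=0$ — and that all background coefficients ($A$, $\Phi$, and the brackets with $\Phi$) are \emph{uniformly bounded} on $\mathcal{A}_R$ with respect to the cylindrical metric $r^{-2}\left|dz\right|^{2}=dt^{2}+d\vartheta^{2}$ adapted to the puncture. In this geometry $\Phi^{\bmod}=\mathrm{diag}\left(C,-C\right)dz/z$ has bounded norm, hence so does $\Phi$ by Lemmas 4.1--4.2; this is the same scale in which the measure $r^{-1}dr\,d\vartheta$ and the weighted spaces of \S4.2 are defined.

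I would then estimate $\sigma$ and its cylindrical derivatives $r\partial_r\sigma$, $\partial_\vartheta\sigma$, $\left(r\partial_r\right)^{2}\sigma$. The weighted Sobolev embedding $H_{-1+{\delta}'}^{2}\left(r^{-1}dr\,d\vartheta\right)\hookrightarrow C^{0}$ applied to the $\gamma$ of Lemma 4.2 yields, for any ${\delta}''<{\delta}'$, the pointwise decay $\left|\gamma\right|,\left|r\partial_r\gamma\right|,\left|\partial_\vartheta\gamma\right|\le C r^{{\delta}''}$ on a punctured neighbourhood, hence $\le C R^{{\delta}''}$ on $\mathcal{A}_R$; this is where the slack ${\delta}''<{\delta}'$ is spent. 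The cut-off factor is controlled by assumption (4.1): $\left|r\partial_r\chi_R\right|+\left|\left(r\partial_r\right)^{2}\chi_R\right|\le C$ uniformly in $R$, and $\left|\chi_R\right|\le 1$. By the Leibniz rule each term of $\mathcal{L}_{\left(A,\Phi\right)}\left(\sigma\right)+Q\left(\sigma\right)$ is a product of a bounded background coefficient, a bounded power of $\chi_R$ with its $r\partial_r$-derivatives, and at least one factor of $\gamma$ or a derivative thereof; every such factor contributes $O\!\left(R^{{\delta}''}\right)$. Summing the finitely many terms over the finitely many punctures gives $\left\|*F_{A_R^{app}}^{\bot}+*\left[\Phi_R^{app}\wedge\left(\Phi_R^{app}\right)^{*}\right]\right\|_{C^{0}\left(X^{\times}\right)}\le C R^{{\delta}''}$ with $C=C\left({\delta}',{\delta}''\right)$ independent of $R$.

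The hard part will be the bookkeeping in the middle step: one must verify that passing to the cylindrical geometry genuinely renders all the bracket coefficients $\left[\Phi,\,\cdot\,\right]$ and $\left[\Phi^{*},\,\cdot\,\right]$ bounded, so that no spurious negative power of $r$ survives to overwhelm the $R^{{\delta}''}$ gain. In the smooth metric on $X$ one has $\left|\Phi^{\bmod}\right|\sim r^{-1}$, and a naive expansion would produce terms of size $r^{-2}\left|\gamma\right|\sim R^{{\delta}''-2}$; the estimate holds only because the singular part $dz/z$ enters exclusively through commutators that are $O\!\left(1\right)$ in the adapted frame, and because the $r\partial_r\chi_R$ derivatives are $O\!\left(1\right)$ rather than $O\!\left(r^{-1}\right)$ in precisely this scale by (4.1). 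Once this scaling is pinned down and matched against the $O\!\left(r^{{\delta}''}\right)$ decay of $\gamma$, the remaining steps are routine applications of the weighted embedding and the Leibniz rule.
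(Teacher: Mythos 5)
Your skeleton is the right one, and it is essentially the argument behind the result the paper cites (the paper's own ``proof'' is nothing but a reference to Swoboda, Lemma 3.5): localize the error to the annuli $\left\{3R/4\le r\le R\right\}$, since both glued pieces are exact solutions; expand the first Hitchin operator around $\left(A,\Phi\right)$ so no $\sigma$-independent term survives; and work in the cylindrical scale so that the $\Phi$-brackets and the cut-off derivatives from (4.1) are $O\left(1\right)$. The problem is the analytic core of your third step, and it is a genuine gap, not bookkeeping.

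You bound the error using pointwise bounds on $\gamma$ and its first cylindrical derivatives, and you claim these follow from the embedding $H^{2}_{-1+\delta'}\left(r^{-1}\,dr\,d\vartheta\right)\hookrightarrow C^{0}$. That embedding controls only $\gamma$ itself: in two dimensions $H^{1}\not\hookrightarrow L^{\infty}$, so $\gamma\in H^{2}_{-1+\delta'}$ gives $\left|\gamma\right|\le C r^{\delta'}$ pointwise but gives no pointwise bound on $r\partial_{r}\gamma$ or $\partial_{\vartheta}\gamma$, which are merely in a weighted $H^{1}\subset L^{p}$. Worse, the quantity to be estimated genuinely contains \emph{second} derivatives of $\sigma=\chi_{R}\gamma$: as you yourself note, $\mathcal{L}_{\left(A,\Phi\right)}\left(\sigma\right)$ is schematically $\Delta_{A}\sigma+\cdots$, coming from $F_{g^{*}A}=F_{A}+d_{A}a+a\wedge a$ with $a$ built from first derivatives of $g$. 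In the Leibniz expansion $\left(r\partial_{r}\right)^{2}\left(\chi_{R}\gamma\right)=\left(\left(r\partial_{r}\right)^{2}\chi_{R}\right)\gamma+2\left(r\partial_{r}\chi_{R}\right)\left(r\partial_{r}\gamma\right)+\chi_{R}\left(r\partial_{r}\right)^{2}\gamma$, only the first term is controlled by what you have established; the last two involve exactly the derivatives of $\gamma$ for which you have no $C^{0}$ control, and with the stated regularity they are only in weighted $L^{2}$. That would yield an $L^{2}$ bound on the error, not the $C^{0}$ bound the lemma asserts -- and uniformity in $R$ is the whole point, since the annuli shrink into the puncture as $R\to 0$.

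The missing ingredient is an elliptic regularity upgrade before the term-by-term estimate. On the punctured neighborhood where Lemma 4.2 applies, $g=\exp\left(\gamma\right)$ intertwines two smooth exact solutions, hence satisfies a first-order elliptic equation of the form $\bar{\partial}g=g\left(A^{\bmod}_{p}\right)^{0,1}-A^{0,1}g$, whose coefficients are bounded in the cylindrical gauge and differ by $O\left(r^{\delta}\right)$ by Lemma 4.1. Interior elliptic estimates applied on cylindrical annuli of fixed size (with constants uniform in $r$, by translation invariance along the cylinder) bootstrap the weighted-$L^{2}$ decay of Lemma 4.2 to pointwise decay $\left|\left(r\partial_{r}\right)^{j}\partial_{\vartheta}^{l}\gamma\right|\le C r^{\delta''}$ for all $j+l\le 2$. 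Only with these bounds in hand does your Leibniz/product argument close; this regularity step is precisely what the cited Swoboda lemma supplies and what your write-up takes for granted.
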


\begin{proof}
See \cite{Swoboda}, Lemma 3.5.
\end{proof}

In the subsections that follow we use the approximate solutions defined above, in order to obtain an approximate solution by gluing parabolic Higgs bundles over a complex connected sum of Riemann surfaces.

\subsection{Extending $\text{SL(2}\text{,}\mathbb{R}\text{)}$-pairs into $\text{Sp(4}\text{,}\mathbb{R}\text{)}$}\label{pairs_on_both_sides}
Let ${{X}_{1}}$ be a closed Riemann surface of genus ${{g}_{1}}$ and ${{D}_{1}}=\left\{ {{p}_{1}},\ldots ,{{p}_{s}} \right\}$ a collection of distinct points on ${{X}_{1}}$. Let $\left( {{E}_{1}},{{\Phi }_{1}} \right)\to {{X}_{1}}$ be a parabolic stable $\text{SL(2}\text{,}\mathbb{R}\text{)}$-Higgs bundle. Then there exists an adapted Hermitian metric ${{h}_{1}}$, such that $\left( {{A}_{{{h}_{1}}}},{{\Phi }_{1}} \right)$ is a solution to the equations with ${{A}_{{{h}_{1}}}}=\nabla \left( {{{\bar{\partial }}}_{1}},{{h}_{1}} \right)$ the associated Chern connection.

Let ${{g}_{1}}=\exp \left( {{\gamma }_{1}} \right)$ be the complex gauge transformation from \S \ref{approximate_SL2}, such that ${{g}^{*}_{1}}\left( {{A}_{{{h}_{1}}}},{{\Phi }_{1}} \right)$ is asymptotically close to a model solution $\left( A_{1,p}^{\bmod },\Phi _{1,p}^{\bmod } \right)$ near the puncture $p$, for each $p\in {{D}_{1}}$. Choose a trivialization $\tau $ over a neighborhood ${{\mathsf{\mathcal{U}}}_{p}}\subset X_{1}$ so that ${{\left( {{A}_{{{h}_{1}}}} \right)}^{\tau }}$ denotes the connection matrix and let ${{\chi }_{1}}$ be a smooth bump function on ${{\mathsf{\mathcal{U}}}_{p}}$ with the assumptions made in \S \ref{approximate_SL2}, so that we may define ${{\tilde{g}}_{1}}=\exp \left( {{\chi }_{1}}{{\gamma }_{1}} \right)$ and take the approximate solution over ${{X}_{1}}$:
 \[\left( A_{1}^{app},\Phi _{1}^{app} \right)={{\tilde{g}}_{1}}^{*}\left( {{A}_{{{h}_{1}}}},{{\Phi }_{1}} \right)=\left\{ \begin{matrix}
 \left( {{A}_{{{h}_{1}}}},{{\Phi }_{1}} \right),\text{  away from the points in the divisor } {{D}_{1}}  \\
 \left( A_{1,p}^{\bmod },\Phi _{1,p}^{\bmod } \right),\text{  near the point } p, \text{ for each } p \in {{D}_{1}}.\\
\end{matrix} \right. \]
The connection $A_{1}^{app}$ is given, in that same trivialization, by the connection matrix ${{\chi }_{1}}{{\left( {{A}_{{{h}_{1}}}} \right)}^{\tau }}$. The fact that ${{\tilde{g}}_{1}}$ is a complex gauge transformation may cause this $\text{SL(2}\text{,}\mathbb{R}\text{)}$-data to no longer be an exact solution of the equations over the bump region.

We wish to obtain an approximate $\text{Sp(4}\text{,}\mathbb{R}\text{)}$-pair by extending the $\text{SL(2}\text{,}\mathbb{R}\text{)}$-data via an embedding
\[\phi :\text{SL(2}\text{,}\mathbb{R}\text{)}\hookrightarrow \text{Sp(4}\text{,}\mathbb{R}\text{)}\]
and its extension $\phi :\text{SL(2}\text{,}\mathbb{C}\text{)}\hookrightarrow \text{Sp(4}\text{,}\mathbb{C}\text{)}$.
For the Cartan decompositions
\begin{align*}
\mathfrak{sl}\left( 2,\mathbb{R} \right) & = \mathfrak{so}\left( 2 \right)\oplus \mathfrak{m}\left( \text{SL(2}\text{,}\mathbb{R}\text{)} \right)\\
\mathfrak{sp}\left( 4,\mathbb{R} \right) & = \mathfrak{u}\left( 2 \right)\oplus \mathfrak{m}\left( \text{Sp(4}\text{,}\mathbb{R}\text{)} \right),
\end{align*}
their complexifications respectively read
\begin{align*}
\mathfrak{sl}\left( 2,\mathbb{C} \right) & = \mathfrak{so}\left( 2,\mathbb{C} \right)\oplus {{\mathfrak{m}}^{\mathbb{C}}}\left( \text{SL(2}\text{,}\mathbb{R}\text{)} \right)\\
\mathfrak{sp}\left( 4,\mathbb{C} \right) & = \mathfrak{gl}\left( 2,\mathbb{C} \right)\oplus {{\mathfrak{m}}^{\mathbb{C}}}\left( \text{Sp(4}\text{,}\mathbb{R}\text{)} \right).
\end{align*}
Assume now that copies of a maximal compact subgroup of $\text{SL(2}\text{,}\mathbb{R}\text{)}$ are mapped via $\phi$ into copies of a maximal compact subgroup of $\text{Sp(4}\text{,}\mathbb{R}\text{)}$. Then, since $\text{SO(2}{{\text{)}}^{\mathbb{C}}}=\text{SO(2}\text{,}\mathbb{C}\text{)}$ and $\text{U(2}{{\text{)}}^{\mathbb{C}}}=\text{GL}\left( 2,\mathbb{C} \right)$, the embedding $\phi $ describes an embedding $\text{SO(2}\text{,}\mathbb{C}\text{)}\hookrightarrow \text{GL}\left( 2,\mathbb{C} \right)$ and so we may use its infinitesimal deformation ${{\phi }_{*}}:\mathfrak{sl}\text{(2}\text{,}\mathbb{C}\text{)}\to \mathfrak{sp}\text{(4}\text{,}\mathbb{C}\text{)}$ to extend $\text{SL(2}\text{,}\mathbb{C}\text{)}$-data to $\text{Sp(4}\text{,}\mathbb{C}\text{)}$-data (see \cite{Ramanan}, \S 5.4, 5.5 for details).

By a slight abuse of notation, we shall still denote the $\text{Sp(4}\text{,}\mathbb{R}\text{)}$-pair obtained by extension through ${{\phi }}$ by $\left( {{A}_{1}},{{\Phi }_{1}} \right)$, with the curvature of the connection denoted by \[{{F}_{{{A}_{1}}}}\in {{\Omega }^{2}}\left( {{\mathbb{R}}^{2}};\text{ad}\left( Q \right) \right),\]
where $Q$ is the bundle obtained by extension of structure group and with the Higgs field ${{\Phi }_{1}}$ given by \[{{\Phi }_{1}}={{\phi }_{*}}\left| _{{{\mathfrak{m}}^{\mathbb{C}}}\left( \text{SL(2}\text{,}\mathbb{C}\text{)} \right)} \right.\left( \Phi _{1}^{app} \right).\]

Assume, moreover, that the norm of the infinitesimal deformation ${{\phi }}_{*}$ satisfies a Lipschitz condition, in other words, it holds that
\[{{\left\| {{\phi }_{*}}\left( M \right) \right\|}_{\mathfrak{sp}\left( 4,\mathbb{C} \right)}}\le m{{\left\| M \right\|}_{\mathfrak{sl}\left( 2,\mathbb{C} \right)}}\]
for $M\in \mathfrak{sl}\left( 2,\mathbb{C} \right)$ and a real constant $m$. In fact, the norms considered above are equivalent to the ${{C}_{0}}$-norm. Restricting these norms to $\mathfrak{so}\left( 2,\mathbb{C} \right)$ and ${{\mathfrak{m}}^{\mathbb{C}}}\left( \text{SL(2}\text{,}\mathbb{R}\text{)} \right)$ respectively, we may deduce that the error in curvature is still described by the inequality
\[{{\left\| *F_{A_{1}^{app}}+*\left[ \Phi _{1}^{app}, {{\left( \Phi _{1}^{app} \right)}^{*}} \right] \right\|}_{{{C}^{0}}}}\le {{k}_{1}}{{R}^{{{\delta }''}}}\] 
for a (different) real constant ${{k}_{1}}$, which still does not depend on the parameter $R>0$.

In summary, using an embedding
$\phi :\text{SL(2}\text{,}\mathbb{R}\text{)}\hookrightarrow \text{Sp(4}\text{,}\mathbb{R}\text{)}$ with the properties described above, we may extend the approximate solution $\left( A_{1}^{app},\Phi _{1}^{app} \right)$ to take an approximate $\text{Sp(4}\text{,}\mathbb{R}\text{)}$-pair $\left( {{A}_{1}},{{\Phi }_{1}} \right)$ over ${{X}_{1}}$, which agrees with a model solution over an annulus $\Omega _{1}^{p}$ around each puncture $p\in {{D}_{1}}$. This model solution is the extension via $\phi$ of the model $\left( A_{1, p}^{\bmod },\Phi _{1, p}^{\bmod } \right)$ in $\text{SL(2}\text{,}\mathbb{R}\text{)}$; by a slight abuse of notation it shall still be denoted by $\left( A_{1, p}^{\bmod },\Phi _{1, p}^{\bmod } \right)$. The pair $\left( {{A}_{1}},{{\Phi }_{1}} \right)$ lives in the holomorphic principal $\text{GL(2}\text{,}\mathbb{C}\text{)}$-bundle $Q$ obtained by extension of structure group via $\phi$, which we shall keep denoting as $\left( {{E}_{1}},{{h}_{1}} \right)$ to ease notation.

Repeating the above considerations for another closed Riemann surface ${{X}_{2}}$ of genus ${{g}_{2}}$ and ${{D}_{2}}=\left\{ {{q}_{1}},\ldots ,{{q}_{s}} \right\}$ a collection of $s$-many distinct points of ${{X}_{2}}$, we obtain  an approximate $\text{Sp(4}\text{,}\mathbb{R}\text{)}$-pair $\left( {{A}_{2}},{{\Phi }_{2}} \right)$ over ${{X}_{2}}$, which agrees with a model solution $\left( A_{2, q}^{\bmod },\Phi _{2, q}^{\bmod } \right)$ over an annulus $\Omega _{2}^{q}$ around each puncture $q\in {{D}_{2}}$. This pair lives on the holomorphic principal $\text{GL(2}\text{,}\mathbb{C}\text{)}$-bundle obtained by extension of structure group via another appropriate embedding $\text{SL(2}\text{,}\mathbb{R}\text{)}\hookrightarrow \text{Sp(4}\text{,}\mathbb{R}\text{)}$; let this hermitian bundle be denoted by $\left( {{E}_{2}},{{h}_{2}} \right)$.

\subsection{Complex connected sum of Riemann surfaces}\label{ccs}
In order to describe how two parabolic Higgs bundles can be glued to a (non-parabolic) Higgs bundle, the first step is to glue their underlying surfaces with boundary; we summarize this construction below and more details can be found in \cite{Kydonakis} for instance.

Take annuli ${{\mathbb{A}}_{1}}=\left\{ z\in \mathbb{C}\left| {{r}_{1}}<\left| z \right|<{{R}_{1}} \right. \right\}$  and ${{\mathbb{A}}_{2}}=\left\{ z\in \mathbb{C}\left| {{r}_{2}}<\left| z \right|<{{R}_{2}} \right. \right\}$  on the complex plane, and consider the M{\"o}bius transformation ${{f}_{\lambda }}:{{\mathbb{A}}_{1}}\to {{\mathbb{A}}_{2}}$ with ${{f}_{\lambda }}\left( z \right)=\frac{\lambda }{z}$, where $\lambda \in \mathbb{C}$ with $\left| \lambda  \right|={{r}_{2}}{{R}_{1}}={{r}_{1}}{{R}_{2}}$, which defines a conformal biholomorphism between the annuli. 

Let now two compact Riemann surfaces $X_{1}, X_{2}$ of respective genera $g_{1}, g_{2}$. Choose points $p\in {{X}_{1}}$, $q\in {{X}_{2}}$ and local charts ${{\psi }_{i}}:{{U}_{i}}\to \Delta \left( 0,{{\varepsilon }_{i}} \right)$ around these points, for $i=1,2$. The biholomorphism ${{f}_{\lambda }}:{{\mathbb{A}}_{1}}\to {{\mathbb{A}}_{2}}$ can be used to glue the two Riemann surfaces $X_{1}, X_{2}$ along the inverse image of the annuli ${{\mathbb{A}}_{1}},{{\mathbb{A}}_{2}}$ on the surfaces, via the biholomorphism
\[{{g}_{\lambda }}:{{\Omega }_{1}}=\psi _{1}^{-1}\left( {{\mathbb{A}}_{1}} \right)\to {{\Omega }_{2}}=\psi _{2}^{-1}\left( {{\mathbb{A}}_{2}} \right)\] with ${{g}_{\lambda }}=\psi _{2}^{-1}\circ {{f}_{\lambda }}\circ {{\psi }_{1}}$.
For collections of $s$-many distinct points $D_{1}$ on $X_{1}$ and $D_{2}$ on $X_{2}$, this procedure is assumed to be taking place for annuli around each pair of points $\left( p,q \right)$ for $p\in {{D}_{1}}$ and $q\in {{D}_{2}}$.

If ${{X}_{1}},{{X}_{2}}$ are orientable and orientations are chosen for both, since ${{f}_{\lambda }}$ is orientation preserving we obtain a natural orientation on the connected sum ${{X}_{1}}\#{{X}_{2}}$ which coincides with the given ones on $X_{1}$ and $X_{2}$. Therefore, ${{X}_{\#}}={{X}_{1}}\#{{X}_{2}}$ is a Riemann surface of genus ${{g}_{1}}+{{g}_{2}}+s-1$, the \emph{complex connected sum}, where $g_{i}$ is the genus of the $X_{i}$ and $s$ is the number of points in $D_{1}$ and $D_{2}$. Its complex structure however is heavily dependent on the parameters ${{p}_{i}},{{q}_{i}},\lambda $.

\subsection{Gluing cylindrical hermitian vector bundles}\label{gluing_cylindrical_hermitian}

A punctured neighborhood on a Riemann surface can be also thought of, using a cylindrical coordinate transformation, as a half cylinder attached to the surface, and also an annulus in the real parameter $R$ can be thought of as a finite tube of length $\sim T=\left| \log R \right|$. Thus, the gluing of two punctured Riemann surfaces can be thought of as the gluing of two Riemann surfaces with cylindrical ends to get a smooth surface with a finite number of long Euclidean cylinders of length $2\left| \log R \right|$, one for each puncture.

For the Riemann surfaces $X_{i}$  with neck regions $\Omega_{i}$ as defined in \S \ref{ccs} for $i=1,2$, consider the Riemannian metrics 
\[{{g}^{1}}=\frac{{{\left| dz \right|}^{2}}}{{{\left| z \right|}^{2}}}\text{   and   }{{g}^{2}}=\frac{{{\left| dw \right|}^{2}}}{{{\left| w \right|}^{2}}}\]
on ${{\Omega }_{1}}$ and ${{\Omega }_{2}}$ respectively, and endow these with cylindrical coordinates $\left( {{\tau }^{i}},{{\theta }^{i}} \right)$, for $i=1,2$. This way, the punctured Riemann surfaces $X_{1}^{\times}$ and $X_{2}^{\times}$  are viewed as Riemannian manifolds with cylindrical ends. The metrics $g^{1}$ and $g^{2}$ induce a smooth metric on the flat cylinder $\Omega ={{\Omega }_{1}}\sim {{\Omega }_{2}}$ obtained by gluing $\Omega_{1}$ and $\Omega_{2}$ via the orientation reversing isometry $g_{\lambda}$ from \S \ref{ccs}. This metric can be extended smoothly over ${{X}_{\#}}$ to a metric compatible with the complex structure. 

Let now ${{\hat{E}}_{i}}\to X_{i}^{\times }$ be a ${{\mathbb{Z}}_{2}}$-graded cylindrical hermitian vector bundle, for $i=1,2$. This means that there exists a vector bundle ${{E}_{i}}\to {{X}_{i}}$ and a bundle isomorphism ${{\pi }^{*}}{{E}_{i}}\simeq {{\hat{E}}_{i}}\left| _{{{\mathbb{R}}^{+}}\times {{X}_{i}}} \right.$, and that the hermitian metric ${{\hat{H}}_{i}}$ on ${{\hat{E}}_{i}}$ is along the cylindrical end of the form ${{\hat{H}}_{i}}={{\pi }^{*}}{{H}_{i}}$ for some hermitian metric $H_{i}$ on $E_{i}$ and ${{\hat{E}}_{i}}$ splits into an orthogonal sum ${{\hat{E}}_{i}}=\hat{E}_{i}^{+}\oplus \hat{E}_{i}^{-}$ of cylindrical vector bundles, for $i=1,2$. Assuming that there exists an isometry ${{E}_{1}}\to {{E}_{2}}$ of hermitian vector bundles covering ${{g}_{\lambda }}$ and respecting the gradings, the ${{\mathbb{Z}}_{2}}$-graded cylindrical hermitian vector bundles ${{\hat{E}}_{i}}$ can be glued together to get a ${{\mathbb{Z}}_{2}}$-graded hermitian vector bundle ${{E}_{\#}}=E_{\#}^{+}\oplus E_{\#}^{-}$ over ${{X}_{\#}}$.

\subsection{Gluing the data $\left( A, \Phi \right)$}\label{gluing_the_data}

In \S \ref{pairs_on_both_sides} we described the construction of two pairs $\left( A_{1}, \Phi_{1} \right)$ and $\left( A_{2}, \Phi_{2} \right)$ on hermitian vector bundles with cylindrical ends. The pair $\left( A_{1}, \Phi_{1} \right)$ agrees with the model solution $\left( A_{1, p}^{\bmod },\Phi _{1, p}^{\bmod } \right)$ over an annulus $\Omega_{1}$ for each puncture $p$. Similarly, for the pair $\left( A_{2}, \Phi_{2} \right)$. Remember that $A_{1}^{\text{mod}}=A_{2}^{\text{mod}}=0$ and 
\[\Phi _{1}^{\text{mod}}={{\phi }_{1,*}}\left| _{{{\mathfrak{m}}^{\mathbb{C}}}\left( \text{SL}\left( 2,\mathbb{R} \right) \right)} \right.\left( \begin{matrix}
   {{C}_{1}} & 0  \\
   0 & -{{C}_{1}}  \\
\end{matrix} \right)\frac{dz}{z},\]
\[\Phi _{2}^{\text{mod}}={{\phi }_{2,*}}\left| _{{{\mathfrak{m}}^{\mathbb{C}}}\left( \text{SL}\left( 2,\mathbb{R} \right) \right)} \right.\left( \begin{matrix}
   {{C}_{2}} & 0  \\
   0 & -{{C}_{2}}  \\
\end{matrix} \right)\frac{dw}{w},\]
for embeddings ${{\phi }_{1}},{{\phi }_{2}}:\mathfrak{sl}\left( 2,\mathbb{C} \right)\to \mathfrak{sp}\left( 4,\mathbb{C} \right)$ and real constants ${{C}_{1}},{{C}_{2}}$, as in \S \ref{pairs_on_both_sides}. 

The gluing of the Riemann surfaces is realized along the curve $zw=\lambda $, thus we have $\frac{dz}{z}=-\frac{dw}{w}$ over the annuli ${{\Omega }_{1}}$ and ${{\Omega }_{2}}$ for each point $p$, respectively $q$. Therefore, assuming that the constants ${{C}_{1}}$ and ${{C}_{2}}$ are such so that the Higgs fields  $\Phi _{1}^{\text{mod}}$ and $\Phi _{2}^{\text{mod}}$ match-up, then we are permitted to construct from this pair of singular model solutions on the cylinder, a smooth model solution, which we shall denote by $\left( A_{p,q}^{\bmod },\Phi _{p,q}^{\bmod } \right)$ for each pair of points $p,q$ around which the annuli are glued together; in \S \ref{pairs_model_represenations} later on we will see explicit examples of embeddings for which the Higgs fields match-up.  

We thus glue the pairs $\left( {{A}_{1}},{{\Phi }_{1}} \right),\left( {{A}_{2}},{{\Phi }_{2}} \right)$ together to get an \emph{approximate solution} of the $\text{Sp(4}\text{,}\mathbb{R}\text{)}$-Hitchin equations:

\[\left( A_{R}^{app},\Phi _{R}^{app} \right):=\left\{ \begin{matrix}
   \left( {{A}_{1}},{{\Phi }_{1}} \right), & {} & \text{over }{{X}_{1}}\backslash {{X}_{2}}  \\
   \left( A_{p,q}^{\bmod },\Phi _{p,q}^{\bmod } \right), & {} & \text{         over } \Omega \text{ around each pair of points } \left( p,q \right)  \\
   \left( {{A}_{2}},{{\Phi }_{2}} \right) & {} & \text{over }{{X}_{2}}\backslash {{X}_{1}}  \\
\end{matrix} \right.,\]
considered on the bundle $\left( E_{\#},{{h}_{\#}} \right)$ over the complex connected sum ${{X}_{\#}}:=X_{{1}}\#X_{{2}}$.
\vspace{5mm}
\begin{center}
  \includegraphics[width=0.6\linewidth,height=0.6\textheight,keepaspectratio]{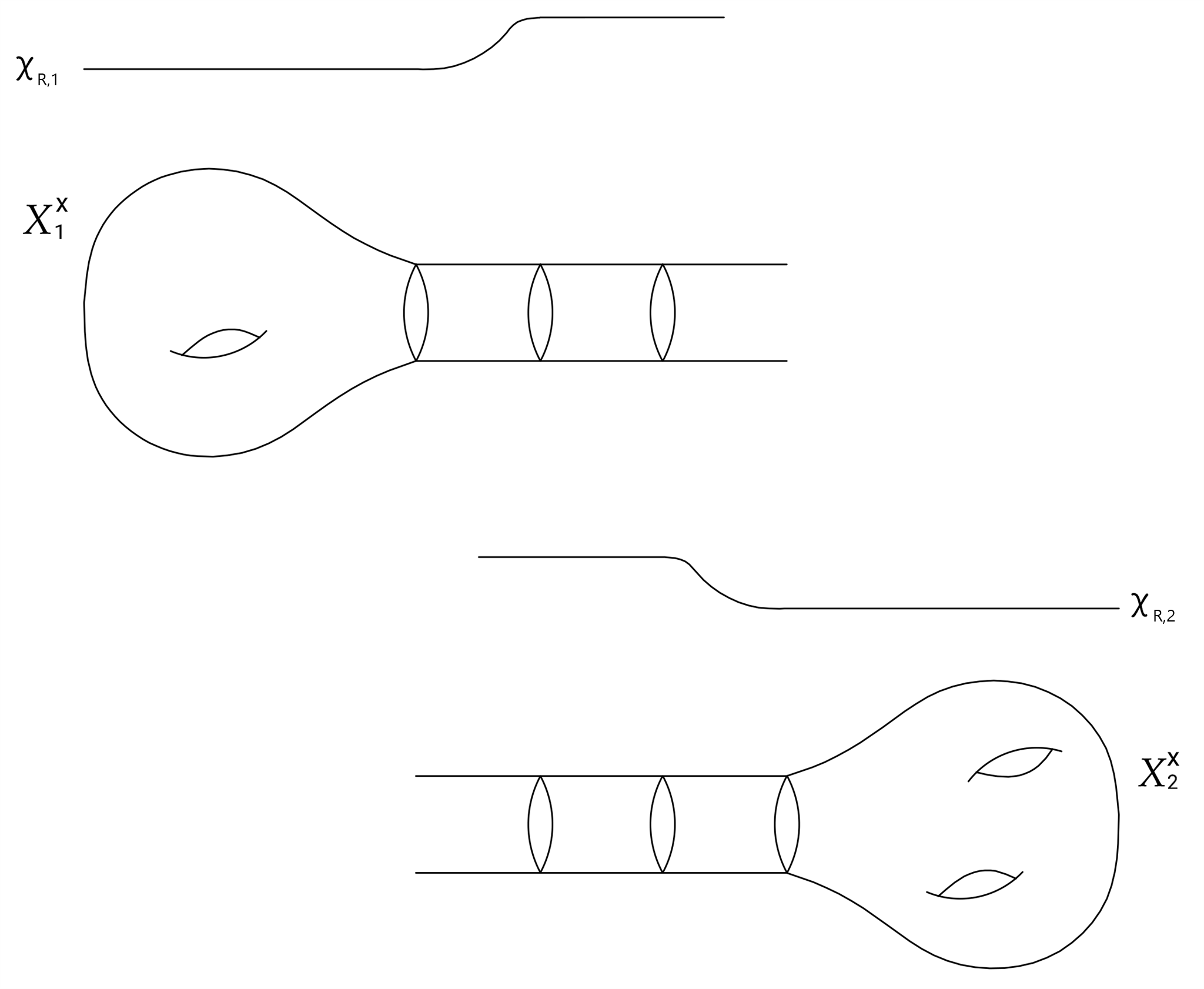}
  \captionof{figure}{Constructing approximate solutions over $X_{1}^{\times }$ and $X_{2}^{\times }$.}
\end{center}
\vspace{5mm}
\begin{center}
  \includegraphics[width=0.6\linewidth,height=0.6\textheight,keepaspectratio]{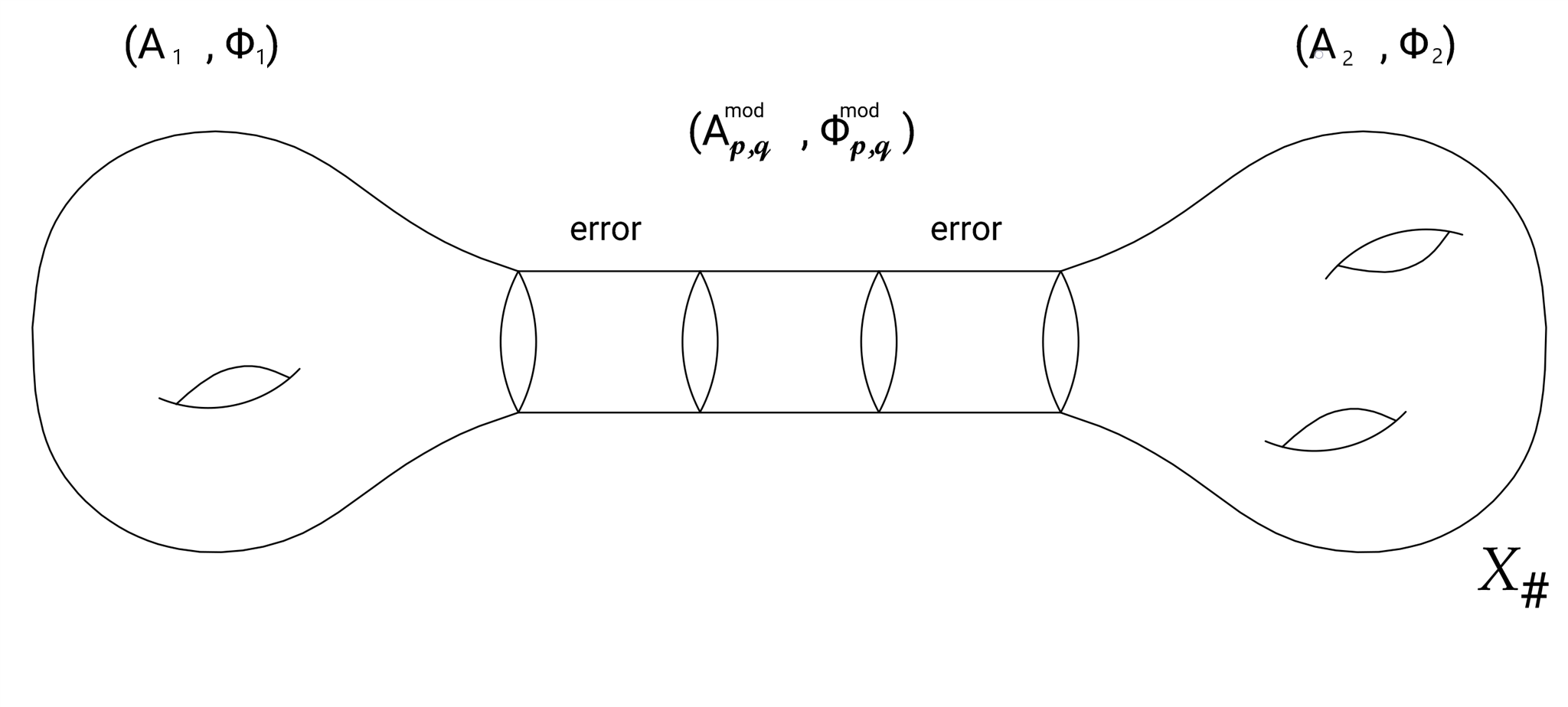}
   \captionof{figure}{$\left( A_{R}^{app},\Phi _{R}^{app} \right)$ over the complex connected sum ${{X}_{\#}}$.}
\end{center}
\vspace{5mm}
By construction, $\left( A_{R}^{app},\Phi _{R}^{app} \right)$ is a smooth pair on ${{X}_{\#}}$, complex gauge equivalent to an exact solution of the Hitchin equations by a smooth gauge transformation defined over all of ${{X}_{\#}}$. It satisfies the second equation, while the first equation is satisfied up to an error which we have good control of:

\begin{lemma}\label{approximate_estimate} The approximate solution $\left( A_{R}^{app},\Phi _{R}^{app} \right)$ to the parameter $0<R<1$ satisfies
	\[{{\left\| *F_{A_{R}^{app}}+*\left[ \Phi _{R}^{app},-\tau \left( \Phi _{R}^{app} \right) \right] \right\|}_{{{C}^{0}}\left( {{X}^{\times }} \right)}}\le k{{R}^{{{\delta }''}}},\]
for some constants ${\delta }''>0$ and $k=k\left( {{\delta }''} \right)$, which do not depend on $R$.
\end{lemma}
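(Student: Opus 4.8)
The plan is to recognize the quantity $*F_{A_R^{app}} + *[\Phi_R^{app}, -\tau(\Phi_R^{app})]$ as precisely the defect in the first $\text{Sp}(4,\mathbb{R})$-Hitchin equation, and then to localize that defect using the three-piece description of $(A_R^{app}, \Phi_R^{app})$ recorded just above the statement. First I would invoke the identity from \S2.5, namely $[\varphi, -\tau(\varphi)] = [\varphi, \varphi^*]$, so that the expression to be bounded is literally $*F_{A_R^{app}} + *[\Phi_R^{app} \wedge (\Phi_R^{app})^*]$, which is exactly the form controlled in Lemma 4.3 and in its $\phi_*$-extension in \S4.4. Since the $C^0$-norm over $X_\#$ equals the maximum of the $C^0$-norms over the three regions $X_1 \setminus X_2$, $\Omega$, and $X_2 \setminus X_1$, it suffices to estimate the defect on each region separately.

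The decisive point is that over the gluing annuli $\Omega$ the pair $(A_R^{app}, \Phi_R^{app})$ coincides with the model $(A^{\bmod}_{p,q}, \Phi^{\bmod}_{p,q})$, and this model is an \emph{exact} solution, so it contributes \emph{zero} defect there. I would verify this by direct computation: the model connection vanishes, $A^{\bmod} = 0$, whence $F_{A^{\bmod}} = 0$; and the model Higgs field is a constant diagonal matrix times $dz/z$, so $\Phi^{\bmod}$ and $(\Phi^{\bmod})^*$ are proportional to commuting matrices and $[\Phi^{\bmod} \wedge (\Phi^{\bmod})^*] = 0$. The same conclusion persists after applying $\phi_*$, which is a Lie algebra homomorphism sending $0$ to $0$ and compatible with the adjoint. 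Here it is Assumption 4.5 together with the flat-bundle identification of \S4.6 — under which $\nabla_l = -\nabla_r$ glue to a genuinely flat connection over $\Omega$ — that guarantees the two sides agree there and that no curvature is introduced by the gluing itself; hence the defect vanishes identically on $\Omega$.

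On the remaining two regions the pair equals $(A_l, \Phi_l)$, respectively $(A_r, \Phi_r)$. Each of these is an exact solution except on the transition annuli where the cut-off functions $\chi_1$, $\chi_2$ interpolate between the genuine solution and the model, and there the defect is precisely the quantity bounded in \S4.4 by $k_l R^{\delta''}$, respectively $k_r R^{\delta''}$; recall that this bound descends from Lemma 4.3 via the Lipschitz estimate $\|\phi_*(M)\| \le C\|M\|$ for the infinitesimal embedding. Because the radii in \S4.5 are chosen so that these transition annuli lie outside the gluing region $\Omega$ (and inside $X_1 \setminus X_2$, respectively $X_2 \setminus X_1$), the two bounds apply verbatim. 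Combining the three contributions and setting $C := \max\{k_l, k_r\}$ yields the asserted inequality, with a constant independent of $R$.

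I expect the main obstacle to be geometric bookkeeping rather than hard analysis: one must verify that the error is genuinely supported in the transition annuli and stays away from $\Omega$ — in particular that matching $(A_l, \Phi_l)$ to the model on the annulus around each $p \in D_1$, and $(A_r, \Phi_r)$ to the sign-reversed model around each $q \in D_2$, produces neither a jump nor residual curvature along the interfaces where the pieces are joined — and that the constant stays uniform over the finitely many pairs of punctures as $R \to 0$. The only genuinely computational input is the vanishing $[\Phi^{\bmod} \wedge (\Phi^{\bmod})^*] = 0$, which is immediate from the diagonal form of the model.
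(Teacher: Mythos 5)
Your proposal is correct and takes essentially the same route as the paper's own (one-line) proof: the defect is supported only where the cut-off functions act on each hand-side surface, where it is bounded by Lemma 4.3 together with its $\phi_*$-extension from \S 4.4, and one takes $C=\max\left\{ C_{l},C_{r} \right\}$ over the two sides. Your extra verification that the glued model pair contributes zero defect on the neck (flat connection, commuting diagonal matrices) is precisely what is left implicit in the paper's citation of Lemma 4.3.
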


\begin{proof}
Follows from Lemma \ref{curvature_estimate}; take $k:=\max \left\{ {{k}_{1}},{{k}_{2}} \right\}$, for ${{k}_{1}},{{k}_{2}}$ the constants appearing in the bound of the error for the approximate solutions constructed over each of the Riemann surfaces $X_{1}$ and $X_{2}$.
\end{proof}

\subsection{The representations ${{\phi }_{irr}}$ and $\psi$}\label{pairs_model_represenations}
In this subsection, we provide examples when the model Higgs fields can match-up using particular representations from $\text{SL(2}\text{,}\mathbb{R}\text{)}$ into $\text{Sp(4}\text{,}\mathbb{R}\text{)}$.
\vspace{2mm}
\paragraph{\emph{The irreducible representation ${{\phi }_{irr}}:\mathrm{SL(2}\text{,}\mathbb{R}\text{)}\hookrightarrow \mathrm{Sp(4}\text{,}\mathbb{R}\text{)}$.}} Let $\left( A_{1}^{app},\Phi _{1}^{app} \right)$ over $X_{1}$ be the approximate $\text{SL(2}\text{,}\mathbb{C}\text{)}$-pair in parameter $R>0$, as was considered in \S \ref{approximate_SL2}, which agrees with the model pair \[A_{1}^{\bmod }=0,\,\,\Phi _{1}^{\bmod }=\left( \begin{matrix}
   C & 0  \\
   0 & -C  \\
\end{matrix} \right)\frac{dz}{z}\] for nonzero $C\in \mathbb{R}$, over an annulus in $z$-coordinates around a point $p\in {{D}_{1}}$.\\
The embedding ${{\phi }_{irr}}$ defined in (\ref{irreducible_rep}) extends to give an embedding ${{\phi }_{irr}}:\text{SL(2}\text{,}\mathbb{C}\text{)}\hookrightarrow \text{Sp(4}\text{,}\mathbb{C}\text{)}$. For the Lie algebra of $\text{SL(2}\text{,}\mathbb{C}\text{)}$, $\mathfrak{sl}\left( 2,\mathbb{C} \right)=\left\{ \left( \begin{matrix}
   a & b  \\
   c & -a  \\
\end{matrix} \right)\left| \,\,a,b,c\in \mathbb{C} \right. \right\}$, we may use a Cartan basis for the Lie algebra to determine the infinitesimal deformation, ${{\phi }_{irr}}_{*}:\mathfrak{sl}\left( 2,\mathbb{C} \right)\to \mathfrak{sp}\left( 4,\mathbb{C} \right)$ with
	\[{{\phi }_{irr}}_{*}\left( \left( \begin{matrix}
   a & b  \\
   c & -a  \\
\end{matrix} \right) \right)=\left( \begin{matrix}
   3a & -\sqrt{3}b & 0 & 0  \\
   -\sqrt{3}c & a & 0 & 2b  \\
   0 & 0 & -3a & \sqrt{3}c  \\
   0 & 2c & \sqrt{3}b & -a  \\
\end{matrix} \right).\]
We now notice that ${{\phi }_{irr}}\left( \text{SO(2)} \right)$ lies in a copy of $\text{U(2)}\hookrightarrow \text{Sp(4}\text{,}\mathbb{R}\text{)}$, that is,
\[\text{U(2)}\cong \left\{ \left( \begin{matrix}
   A & B  \\
   -B & A  \\
\end{matrix} \right)\left| \,{{A}^{T}}A+{{B}^{T}}B={{I}_{2}},\,\,{{A}^{T}}B-{{B}^{T}}A=0 \right. \right\}.\]
In other words, copies of a maximal compact subgroup of $\text{SL(2}\text{,}\mathbb{R}\text{)}$ are mapped into copies of a maximal compact subgroup of $\text{Sp(4}\text{,}\mathbb{R}\text{)}$. Furthermore, one can check that for $A\in \mathfrak{sl}\left( 2,\mathbb{C} \right)$: \[{{\left\| {{\phi }_{irr}}_{*}\left( A \right) \right\|}_{\mathfrak{sp}\left( 4,\mathbb{C} \right)}}=10{{\left\| A \right\|}_{\mathfrak{sl}\left( 2,\mathbb{C} \right)}}.\]
As was described in \S \ref{pairs_on_both_sides}, ${{\phi }_{irr}}$ can be now used to extend $\text{SL(2}\text{,}\mathbb{R}\text{)}$-data to $\text{Sp(4}\text{,}\mathbb{R}\text{)}$-data $\left( {{A}_{1}},{{\Phi }_{1}} \right)$, where in this case, we have ${{A}_{1}}=0$ and
 \[{{\Phi }_{1}}={{\phi }_{irr}}_{*}\left| _{{{\mathfrak{m}}^{\mathbb{C}}}\left( \text{SL(2}\text{,}\mathbb{C}\text{)} \right)} \right.\left( \Phi _{1}^{app} \right)=\left( \begin{matrix}
   3C & 0 & 0 & 0  \\
   0 & C & 0 & 0  \\
   0 & 0 & -3C & 0  \\
   0 & 0 & 0 & -C  \\
\end{matrix} \right)\frac{dz}{z}\]
over the annulus on $X_{1}$ in $z$-coordinates around the point $p$.
\vspace{2mm}
\paragraph{\emph{The representation $\psi :\mathrm{SL(2}\text{,}\mathbb{R}\text{)}\times \mathrm{SL(2}\text{,}\mathbb{R}\text{)}\hookrightarrow \mathrm{Sp(4}\text{,}\mathbb{R}\text{)}$.}}
Let $\left( A_{2,1}^{app},\Phi _{2,1}^{app} \right),\,\left( A_{2,2}^{app},\Phi _{2,2}^{app} \right)$ over $X_{2}$ be two approximate $\text{SL(2}\text{,}\mathbb{C}\text{)}$-pairs in parameter $R>0$, which agree respectively with the model pairs
	\[A_{2,1}^{\bmod }=0,\,\,\Phi _{2,1}^{\bmod }=\left( \begin{matrix}
   -3C & 0  \\
   0 & 3C  \\
\end{matrix} \right)\frac{dz}{z} \,\, \text{   and   } \,\, A_{2,2}^{\bmod }=0,\,\,\Phi _{2,2}^{\bmod }=\left( \begin{matrix}
   -C & 0  \\
   0 & C  \\
\end{matrix} \right)\frac{dz}{z},\]
for the same nonzero real parameter $C\in \mathbb{R}$ considered in defining the pair $\left( A_{1}^{app},\Phi _{1}^{app} \right)$ over $X_{1}$ above, over an annulus in $w$-coordinates around a point $q\in {{D}_{2}}$.\\
We extend $\text{SL(2}\text{,}\mathbb{C}\text{)}\times \text{SL(2}\text{,}\mathbb{C}\text{)}$-data into $\text{Sp(4}\text{,}\mathbb{C}\text{)}$ using the homomorphism $\psi$ defined in (\ref{diagonal_Fuchsian}). Take the extension of the embedding $\psi$ into $\text{SL(2}\text{,}\mathbb{C}\text{)}\times \text{SL(2}\text{,}\mathbb{C}\text{)}$, and now the
infinitesimal deformation of this homomorphism is given by ${{\psi }_{*}}:\mathfrak{sl}\left( 2,\mathbb{C} \right)\times \mathfrak{sl}\left( 2,\mathbb{C} \right)\hookrightarrow \mathfrak{sp}\left( 4,\mathbb{C} \right)$ with
 \[{{\psi }_{*}} \left( \left( \begin{matrix}
   a & b  \\
   c & -a  \\
\end{matrix} \right),\left( \begin{matrix}
   e  & f   \\
   g  & -e   \\
\end{matrix} \right) \right)=\left( \begin{matrix}
   a & 0 & b & 0  \\
   0 & e  & 0 & f   \\
   c & 0 & -a & 0  \\
   0 & g  & 0 & -e   \\
\end{matrix} \right).\]
We may again check that $\psi \left( \text{SO(2)}\times \text{SO(2)} \right)$ is a copy of $\text{U(2)}$. On the other hand, a norm on the space
$\mathfrak{sl}\left( 2,\mathbb{C} \right)\times \mathfrak{sl}\left( 2,\mathbb{C} \right)$ is given by \[\psi \left( A,B \right)=\left\| A \right\|+\left\| B \right\|\] and we check that
	\[{{\left\| {{\psi }_{*}}\left( A,B \right) \right\|}_{\mathfrak{sp}\left( 4,\mathbb{C} \right)}}={{\left\| \left( A,B \right) \right\|}_{\mathfrak{sl}\left( 2,\mathbb{C} \right)\times \mathfrak{sl}\left( 2,\mathbb{C} \right)}}={{\left\| A \right\|}_{\mathfrak{sl}\left( 2,\mathbb{C} \right)}}+{{\left\| B \right\|}_{\mathfrak{sl}\left( 2,\mathbb{C} \right)}}\]
and so the map ${{\psi }_{*}}$ at the level of Lie algebras is an isometry. Therefore, $\psi $ extends to give an embedding $\psi :\text{SO(2}\text{,}\mathbb{C}\text{)}\times \text{SO(2}\text{,}\mathbb{C}\text{)}\hookrightarrow \text{GL}\left( 2,\mathbb{C} \right)$, and so we may use the infinitesimal deformation ${{\psi }_{*}}$ to extend the $\text{SL(2}\text{,}\mathbb{C}\text{)}\times \text{SL(2}\text{,}\mathbb{C}\text{)}$-data $\left( \left( A_{2,1}^{app},\Phi _{2,1}^{app} \right),\left( A_{2,2}^{app},\Phi _{2,2}^{app} \right) \right)$ to an $\text{Sp(4}\text{,}\mathbb{R}\text{)}$-pair $\left( {{A}_{2}},{{\Phi }_{2}} \right)$, with ${{A}_{2}}=0$ and Higgs field ${{\Phi }_{2}}$ given by
	\[{{\Phi }_{2}}={{\psi }_{*}}\left| _{{{\mathfrak{m}}^{\mathbb{C}}}\left( \text{SL(2}\text{,}\mathbb{R}\text{)} \right)\,\times \,{{\mathfrak{m}}^{\mathbb{C}}}\left( \text{SL(2}\text{,}\mathbb{R}\text{)} \right)} \right.\left( \Phi _{2,1}^{app},\Phi _{2,2}^{app} \right)=\left( \begin{matrix}
   -3C & 0 & 0 & 0  \\
   0 & -C & 0 & 0  \\
   0 & 0 & 3C & 0  \\
   0 & 0 & 0 & C  \\
\end{matrix} \right)\frac{dw}{w}\]
over the annulus on $X_{2}$ in $w$-coordinates around the point $q$.

\section{Correcting an approximate solution to an exact solution}

\subsection{The contraction mapping argument}\label{contraction_mapping}
A standard strategy, due largely to C. Taubes \cite{Taubes}, for correcting an approximate solution to an exact solution of gauge-theoretic equations involves studying the linearization of a relevant elliptic operator. In the Higgs bundle setting, the linearization of the Hitchin operator was first described in \cite{MSWW} and furthermore in \cite{Swoboda} for solutions to the $\text{SL(2}\text{,}\mathbb{C}\text{)}$-self-duality equations over a nodal surface. We are going to use this analytic machinery to correct our approximate solution to an exact solution over the complex connected sum of Riemann surfaces. We next summarize the strategy to be followed in the forthcoming two sections.

Let $G$ be a connected, semisimple Lie group. For the complex connected sum ${{X}_{\#}}$ consider the nonlinear $G$-Hitchin operator at a pair $\left( A,\Phi  \right)\in {{\Omega }^{1}}\left( {{X}_{\#}},{{E}_{H}}\left( {{\mathfrak{h}}^{\mathbb{C}}} \right) \right)\oplus {{\Omega }^{1,0}}\left( {{X}_{\#}},{{E}_{H}}\left( {{\mathfrak{m}}^{\mathbb{C}}} \right) \right)$

\[\mathsf{\mathcal{H}}\left( A,\Phi  \right)=\left( F\left( A \right)-\left[ \Phi ,\tau \left( \Phi  \right) \right],{{{\bar{\partial }}}_{A}}\Phi  \right).\]
Moreover, consider the orbit map
\[\gamma \mapsto {{\mathsf{\mathcal{O}}}_{\left( A,\Phi  \right)}}\left( \gamma  \right)={{g}^{*}}\left( A,\Phi  \right)=\left( {{g}^{*}}A,{{g}^{-1}}\Phi g \right)\]
for $g=\exp \left( \gamma  \right)$ and $\gamma \in {{\Omega }^{0}}\left( {{X}_{\#}},{{E}_{H}}\left( {{\mathfrak{h}}^{\mathbb{C}}} \right)  \right)$, where $H\subset G$ is a maximal compact subgroup.

Therefore, correcting the approximate solution $\left( A_{R}^{app},\Phi _{R}^{app} \right)$ to an exact solution of the $G$-Hitchin equations accounts to finding a point $\gamma$ in the complex gauge orbit of $\left( A_{R}^{app},\Phi _{R}^{app} \right)$, for which $\mathsf{\mathcal{H}}\left( {{g}^{*}}\left( A_{R}^{app},\Phi _{R}^{app} \right) \right)=0$. However, since we have seen that the second equation is satisfied by the pair $\left( A_{R}^{app},\Phi _{R}^{app} \right)$ and since the condition ${{\bar{\partial }}_{A}}\Phi =0$ is preserved under the action of the complex gauge group ${{\mathsf{\mathcal{G}}}_{H}^{\mathbb{C}}}$, we actually seek a solution $\gamma $ to the following equation 
\[{{\mathsf{\mathcal{F}}}_{R}}\left( \gamma  \right):= \mathsf{\mathcal{H}}\circ {{\mathsf{\mathcal{O}}}_{\left( A_{R}^{app},\Phi _{R}^{app} \right)}}\left( \exp (\gamma ) \right)=0.\]
For a Taylor series expansion of this operator
	\[{{\mathsf{\mathcal{F}}}_{R}}\left( \gamma  \right)=\mathsf{\mathcal{H}}\left( A_{R}^{app},\Phi _{R}^{app} \right)+{{L}_{\left( A_{R}^{app},\Phi _{R}^{app} \right)}}\left( \gamma  \right)+{{Q}_{R}}\left( \gamma  \right),\]
where ${{Q}_{R}}$ includes the quadratic and higher order terms in $\gamma $, we can then see that ${{\mathsf{\mathcal{F}}}_{R}}\left( \gamma  \right)=0$ if and only if $\gamma $ is a fixed point of the map:
\begin{align*}
   T: H_{B}^{2}\left( {{X}_{\#}} \right) & \to H_{B}^{2}\left( {{X}_{\#}} \right) \\
   \gamma & \mapsto -{{G}_{R}}\left( \mathsf{\mathcal{H}}\left( A_{R}^{app},\Phi _{R}^{app} \right)+{{Q}_{R}}(\gamma ) \right)
\end{align*}
where we denoted ${{G}_{R}}:= L_{\left( A_{R}^{app},\Phi _{R}^{app} \right)}^{-1}$ and $H_{B}^{2}\left( {{X}_{\#}} \right)$ is the Hilbert space defined by
\[H_{B}^{2}\left( {{X}_{\#}} \right):=\left\{ \gamma \in {{L}^{2}}\left( {{X}_{\#}} \right)\left| {{\nabla }_{B}}\gamma ,\nabla _{B}^{2}\gamma  \right.\in {{L}^{2}}\left( {{X}_{\#}} \right) \right\},\]
for a fixed background connection $\nabla_{B}$ defined as a smooth extension to ${X}_{\#}$ of the model connection $A^{\text{mod}}_{p,q}$ over the cylinder for each pair of points $(p,q)$. 
The problem then reduces to showing that the mapping $T$ is a contraction of an open ball ${{B}_{{{\rho }_{R}}}}$ of radius ${{\rho }_{R}}$ in $H_{B}^{2}\left( {{X}_{\#}} \right)$, since then from Banach's fixed point theorem there will exist a unique $\gamma $ such that $T\left( \gamma  \right)=\gamma $, in other words, such that ${{\mathsf{\mathcal{F}}}_{R}}\left( \gamma  \right)=0$. In particular, one needs to show that:
\begin{enumerate}
  \item $T$ is a contraction defined on ${{B}_{{{\rho }_{R}}}}$ for some ${{\rho }_{R}}$, and
  \item $T$ maps ${{B}_{{{\rho }_{R}}}}$ to ${{B}_{{{\rho }_{R}}}}$.
\end{enumerate}

In order to complete the above described contraction mapping argument, we need to show the following:

\begin{description}
  \item[i] The linearized operator at the approximate solution ${{L}_{\left( A_{R}^{app},\Phi _{R}^{app} \right)}}$ is invertible.
  \item[ii] There is an upper bound for the inverse operator ${{G}_{R}}=L_{\left( A_{R}^{app},\Phi _{R}^{app} \right)}^{-1}$ as an operator ${{L}^{2}}\left( {{r}}drd\theta \right)\to {{L}^{2}}\left( {{r}}drd\theta \right)$.
  \item[iii] There is an upper bound for the inverse operator ${{G}_{R}}=L_{\left( A_{R}^{app},\Phi _{R}^{app} \right)}^{-1}$ also when viewed as an operator ${{L}^{2}}\left( {{r}}drd\theta \right)\to H_{B}^{2}\left( {{X}_{\#}},{{r}}drd\theta \right)$.
  \item[iv] We can control a Lipschitz constant for ${{Q}_{R}}$, that is, there exists a constant $C>0$ such that
	\[{{\left\| {{Q}_{R}}\left( {{\gamma }_{1}} \right)-{{Q}_{R}}\left( {{\gamma }_{0}} \right) \right\|}_{{{L}^{2}}}}\le C\rho {{\left\| {{\gamma }_{1}}-{{\gamma }_{0}} \right\|}_{H_{B}^{2}}}\]
for all $0<\rho \le 1$ and ${{\gamma }_{0}},{{\gamma }_{1}}\in {{B}_{\rho }}$, the closed ball of radius $\rho$ around $0$ in $H_{B}^{2}\left( {{X}_{\#}} \right)$.
\end{description}

\subsection{The Linearization operator ${{L}_{\left( A,\Phi  \right)}}$}\label{Hitchin_operator}

We first need to characterize the linearization operator ${{L}_{\left( A,\Phi  \right)}}$ in general before considering this for the particular approximate pair $\left( A_{R}^{app},\Phi _{R}^{app} \right)$ constructed. The differential of the $G$-Hitchin operator at a pair $\left( A,\Phi  \right)\in {{\Omega }^{1}}\left( {{X}_{\#}},{{E}_{H}}\left( {{\mathfrak{h}}^{\mathbb{C}}} \right) \right)\oplus {{\Omega }^{1,0}}\left( {{X}_{\#}},{{E}_{H}}\left( {{\mathfrak{m}}^{\mathbb{C}}} \right) \right)$ is described by \[D\mathsf{\mathcal{H}}\left( \begin{matrix}
   {\dot{A}}  \\
   {\dot{\Phi }}  \\
\end{matrix} \right)=\left( \begin{matrix}
   {{d}_{A}} & \left[ \Phi ,-\tau \left( \cdot  \right)\right]+\left[ \cdot ,-\tau \left( \Phi  \right) \right]  \\
   \left[ \cdot ,\Phi  \right] & {{{\bar{\partial }}}_{A}}  \\
\end{matrix} \right)\left( \begin{matrix}
   {\dot{A}}  \\
   {\dot{\Phi }}  \\
\end{matrix} \right).\]
Moreover, the differential at $g=Id$ of the orbit map ${{\mathsf{\mathcal{O}}}_{\left( A,\Phi  \right)}}$ is
	\[{{\Lambda }_{\left( A,\Phi  \right)}}\gamma =\left( {{\Lambda }_{A}}\left( \gamma  \right),{{\Lambda }_{\Phi }}\left( \gamma  \right) \right) = \left( {{{\bar{\partial }}}_{A}}\gamma -{{\partial }_{A}}{{\gamma }^{*}},\left[ \Phi ,\gamma  \right] \right).\]
Therefore, \[\left( D\mathsf{\mathcal{H}}\circ {{\Lambda }_{\left( A,\Phi  \right)}} \right)\left( \gamma  \right)=\left( \begin{matrix}
   {{\partial }_{A}}{{{\bar{\partial }}}_{A}} \gamma - {{{\bar{\partial }}}_{A}}{{\partial }_{A}} \gamma^{*} +\left[ \Phi ,-\tau \left( \left[ \Phi ,\gamma  \right] \right) \right]+\left[ \left[ \Phi ,\gamma  \right],-\tau \left( \Phi  \right) \right] \\
   \left[ {{{\bar{\partial }}}_{A}}\gamma -{{\partial }_{A}}\gamma^{*} ,\Phi  \right]+{{{\bar{\partial }}}_{A}}\left[ \Phi ,\gamma  \right]  \\
\end{matrix} \right).\]
Now the differential  $D\mathsf{\mathcal{F}}\left( \gamma  \right)$ is the first entry of $\left( D\mathsf{\mathcal{H}}\circ {{\Lambda }_{\left( A,\Phi  \right)}} \right)\left( \gamma  \right)$,
\begin{align*}
D\mathsf{\mathcal{F}}\left( \gamma  \right) &=D\left(  \mathsf{\mathcal{H}}\circ {{\mathsf{\mathcal{O}}}_{\left( A,\Phi  \right)}} \right)\left( \gamma  \right)\\
& = {{\partial }_{A}}{{{\bar{\partial }}}_{A}} \gamma - {{{\bar{\partial }}}_{A}}{{\partial }_{A}} \gamma^{*} +\left[ \Phi ,-\tau \left( \left[ \Phi ,\gamma  \right] \right) \right]+\left[ \left[ \Phi ,\gamma  \right],-\tau \left( \Phi  \right) \right].
\end{align*}
Note that ${{\Lambda }_{\left( A,\Phi  \right)}}:{{\Omega }^{0}}\left( {{X}_{\#}},{{E}_{H}}\left( {{\mathfrak{h}}^{\mathbb{C}}} \right) \right)\to {{\Omega }^{1}}\left( {{X}_{\#}},{{E}_{H}}\left( {{\mathfrak{h}}^{\mathbb{C}}} \right) \right)\oplus {{\Omega }^{1,0}}\left( {{X}_{\#}},{{E}_{H}}\left( {{\mathfrak{g}}^{\mathbb{C}}} \right) \right)$, and 
$$D\mathsf{\mathcal{H}}\circ {{\Lambda }_{\left( A,\Phi  \right)}}:{{\Omega }^{0}}\left( {{X}_{\#}},{{E}_{H}}\left( {{\mathfrak{h}}^{\mathbb{C}}} \right) \right)\to {{\Omega }^{2}}\left( {{X}_{\#}},{{E}_{H}}\left( {{\mathfrak{h}}^{\mathbb{C}}} \right) \right)\oplus {{\Omega }^{1,1}}\left( {{X}_{\#}},{{E}_{H}}\left( {{\mathfrak{g}}^{\mathbb{C}}} \right) \right).$$
We finally apply the operator $-i*:{{\Omega }^{2}}\left( {{X}_{\#}},{{E}_{H}}\left( {{\mathfrak{h}}^{\mathbb{C}}} \right) \right)\to {{\Omega }^{0}}\left( {{X}_{\#}},{{E}_{H}}\left( {{\mathfrak{h}}^{\mathbb{C}}} \right) \right)$ to define the \emph{linearization operator}:
 \[{{L}_{\left( A,\Phi  \right)}}:=-i*D\mathsf{\mathcal{F}}\left( \gamma  \right):{{\Omega }^{0}}\left( {{X}_{\#}},{{E}_{H}}\left( {{\mathfrak{h}}^{\mathbb{C}}} \right) \right)\to {{\Omega }^{0}}\left( {{X}_{\#}},{{E}_{H}}\left( {{\mathfrak{h}}^{\mathbb{C}}} \right) \right).\]

\begin{lemma}\label{linearization}
For $\gamma \in {{\Omega }^{0}}\left( {{X}_{\#}},{{E}_{H}}\left( \mathfrak{h} \right) \right)$, \[{{\left\langle {{L}_{\left( A,\Phi  \right)}}\gamma ,\gamma  \right\rangle }_{{{L}^{2}}}}=\left\| {{d}_{A}}\gamma  \right\|_{{{L}^{2}}}^{2}+2\left\| \left[ \Phi ,\gamma  \right] \right\|_{{{L}^{2}}}^{2}\ge 0.\]
In particular, ${{L}_{\left( A,\Phi  \right)}}\gamma =0$ if and only if ${{d}_{A}}\gamma =\left[ \Phi ,\gamma  \right]=0$.
\end{lemma}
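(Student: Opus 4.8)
The plan is to establish the identity by computing the $L^2$ pairing $\langle L_{(A,\Phi)}\gamma,\gamma\rangle$ directly from the explicit formula $L_{(A,\Phi)}=-i*D\mathsf{\mathcal{F}}$ obtained above, and then reading off the kernel from the resulting sum of squares. Since $X_{\#}$ is closed, every integration by parts I perform is free of boundary terms, which is what makes the manipulation clean. I would first split $D\mathsf{\mathcal{F}}(\gamma)$ into its \emph{connection part} $\partial_A\bar\partial_A\gamma-\bar\partial_A\partial_A\gamma^*$ and its \emph{Higgs part} $[\Phi,-\tau([\Phi,\gamma])]+[[\Phi,\gamma],-\tau(\Phi)]$, and handle the two resulting contributions to $\langle L_{(A,\Phi)}\gamma,\gamma\rangle$ separately.

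For the connection part I would pass to a local conformal coordinate $z$, write $\bar\partial_A\gamma=(\nabla_{\bar z}\gamma)\,d\bar z$ and $\partial_A\gamma=(\nabla_z\gamma)\,dz$, and use that $-i*$ turns a $(1,1)$-form $f\,dz\wedge d\bar z$ into a fixed multiple of $f$. Integrating by parts, with $\nabla_z$ and $\nabla_{\bar z}$ adjoint to each other up to sign for the unitary connection $A$, and using the reality of $\gamma$ to combine the two second-order terms, the operator $-i*(\partial_A\bar\partial_A\gamma-\bar\partial_A\partial_A\gamma^*)$ becomes the covariant Laplacian $d_A^*d_A\gamma=\partial_A^*\partial_A\gamma+\bar\partial_A^*\bar\partial_A\gamma$. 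Pairing with $\gamma$ then yields exactly $\|d_A\gamma\|_{L^2}^2$.

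For the Higgs part I would use the fibrewise $\mathrm{ad}$-invariance of the pairing, namely that the adjoint of $[\Phi,\cdot]$ is $[\Phi^*,\cdot]=[-\tau(\Phi),\cdot]$, to move each outer bracket back onto $\gamma$. The two summands $[\Phi,-\tau([\Phi,\gamma])]$ and $[[\Phi,\gamma],-\tau(\Phi)]$ are precisely the two terms produced by linearizing the quadratic expression $[\Phi,\tau(\Phi)]$ in the Hitchin operator; after contracting with $-i*$ (which again converts $dz\wedge d\bar z$ into the volume form) and pairing with $\gamma$, each contributes $\|[\Phi,\gamma]\|_{L^2}^2$. Their sum therefore gives $2\|[\Phi,\gamma]\|_{L^2}^2$, which is the source of the factor $2$.

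Adding the two contributions gives $\langle L_{(A,\Phi)}\gamma,\gamma\rangle_{L^2}=\|d_A\gamma\|_{L^2}^2+2\|[\Phi,\gamma]\|_{L^2}^2\ge 0$. For the final statement, if $\langle L_{(A,\Phi)}\gamma,\gamma\rangle=0$ then the two non-negative summands vanish separately, forcing $d_A\gamma=0$ and $[\Phi,\gamma]=0$; conversely, if $d_A\gamma=0$ and $[\Phi,\gamma]=0$ then every term in $D\mathsf{\mathcal{F}}(\gamma)$ vanishes (using that $d_A\gamma^*=0$ as well, since $A$ is unitary), so $L_{(A,\Phi)}\gamma=0$, and in particular the quadratic form vanishes. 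I expect the delicate step to be the sign and convention bookkeeping in the connection term: one must check that the combination $\partial_A\bar\partial_A-\bar\partial_A\partial_A(\cdot)^*$, once contracted with $-i*$ and paired with the real element $\gamma$, reproduces the positive Laplacian $d_A^*d_A$ rather than a curvature term, and this is exactly where the compatibility between $\tau$, the Hodge star and the real structure on $E_H(\mathfrak{h})$ enters.
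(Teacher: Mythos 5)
Your proposal is correct, but it reaches the identity by a genuinely different route than the paper. The paper's proof is essentially an appeal to Simpson: it introduces $D'=\partial_A+\tau(\Phi)$, $D''=\bar\partial_A+\Phi$, sets $D=D'+D''$, and invokes Simpson's K\"ahler-identity calculations to obtain the operator factorization $L_{(A,\Phi)}=D^*D=2(D')^*D'=2(D'')^*D''$; the lemma then follows since $\langle L\gamma,\gamma\rangle=2\|D''\gamma\|^2=2\|\bar\partial_A\gamma\|^2+2\|[\Phi,\gamma]\|^2$ (the two summands have types $(0,1)$ and $(1,0)$, hence are orthogonal) and $2\|\bar\partial_A\gamma\|^2=\|d_A\gamma\|^2$ under the reality condition on $\gamma$. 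You instead compute the quadratic form directly: integration by parts on the closed surface plus the identity $-i*(\partial_A\bar\partial_A-\bar\partial_A\partial_A)=d_A^*d_A$ on $0$-forms for the second-order piece, and fibrewise $\mathrm{ad}$-invariance (adjointness of $[\Phi,\cdot]$ and $[-\tau(\Phi),\cdot]$) for the zeroth-order piece, each Higgs bracket contributing $\|[\Phi,\gamma]\|^2$. What your route buys: it is self-contained and makes transparent that the identity requires \emph{no} equation on $(A,\Phi)$ at all, which matters because the lemma is applied at the approximate pair $(A_R^{app},\Phi_R^{app})$, not at an exact solution (the factorization route uses at least $\bar\partial_A\Phi=0$ to kill the cross terms $(D')^*D''+(D'')^*D'$). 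What the paper's route buys: brevity and the stronger operator identity, which ties the lemma to Simpson's harmonic-bundle theory. Finally, the delicate point you flag is real and is resolved only by the correct reality convention: the computation works when $\gamma^*=\gamma$, i.e.\ $\gamma$ is valued in $i\mathfrak{h}$ (equivalently after the conjugation $\gamma\mapsto i\gamma$ that the paper performs in the proof of Proposition 6.7); with the literal skew-hermitian reading $\gamma^*=-\gamma$ the two second-order terms combine into the anticommutator $\partial_A\bar\partial_A+\bar\partial_A\partial_A=[F_A,\cdot\,]$, producing exactly the curvature term you warn against — as it must, by unitary gauge-equivariance of the Hitchin operator — and the positivity statement would fail. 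So your computation is the right one under the intended convention, and identifying that convention is the only step you left implicit.
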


\begin{proof}
That the linearization operator is nonnegative was first observed by C. Simpson in \cite{Simpson-variations}. The argument that follows next generalizes the one from Proposition 5.1 in \cite{MSWW} for $\text{SL(2}\text{,}\mathbb{C}\text{)}$:\\
The compact real form $\tau :{{\mathfrak{g}}^{\mathbb{C}}}\to {{\mathfrak{g}}^{\mathbb{C}}}$ induces an ad-invariant inner product on ${{\mathfrak{g}}^{\mathbb{C}}}$. For operators ${D}':={{\partial }_{A}}+\tau \left( \Phi  \right)$ and ${D}'':={{\bar{\partial }}_{A}}+\Phi $, set  $D={D}'+{D}''$. Then, similarly to  \cite{Simpson-variations} and \cite{Simpson-Higgs} these operators satisfy analogues of the K\"ahler identities, and the calculations of C. Simpson show that 
\[{{L}_{\left( A,\Phi  \right)}}={{D}^{*}}D=2{{\left( {{D}'} \right)}^{*}}{D}'=2{{\left( {{D}''} \right)}^{*}}{D}''.\]
This implies the statement of the proposition.
\end{proof}

\section{Analytic estimates for an approximate solution}

\subsection{The Cappell-Lee-Miller gluing theorem}
A very useful method when dealing with surgery problems in gauge theory over manifolds with very long necks involves the study of the space of eigenfunctions corresponding to small eigenvalues (low eigensolutions) of a self-adjoint Dirac type operator on such a manifold. In this section we explain how the gluing construction for Higgs bundles over a complex connected sum of Riemann surfaces fits into the framework of gluing cylindrical $\mathbb{Z}_{2}$-graded Dirac-type operators. For precise definitions and details on the general method used below we refer the reader to \cite{CLM}, \cite{Nicolaescuarticle}, \cite{Swoboda} and \cite{Yoshida}. We shall  review next only parts of this general framework. 

\begin{definition} Let $\hat{E}\to \hat{N}$ be a ${{\mathbb{Z}}_{2}}$-graded cylindrical hermitian vector bundle over a cylindrical Riemannian manifold $\hat{N}$. A first order partial differential operator $\mathfrak{D}:{{C}^{\infty }}\left( {\hat{E}} \right)\to {{C}^{\infty }}\left( {\hat{E}} \right)$ is called a \emph{${{\mathbb{Z}}_{2}}$-graded cylindrical Dirac-type operator} if with respect to the ${{\mathbb{Z}}_{2}}$-grading of $\hat{E}$, it takes the form
	\[\mathfrak{D} = \left( \begin{matrix}
   0 & {{{\mathsf{\mathcal{D}}}}^{*}}  \\
   {\mathsf{\mathcal{D}}} & 0  \\
\end{matrix} \right),\]
such that along the cylindrical end, $\mathsf{\mathcal{D}}$ is of the form $\mathsf{\mathcal{D}}=G\left( d\tau -D \right)$, for a self-adjoint Dirac-type operator $D:{{C}^{\infty }}\left( {{E}^{+}} \right)\to {{C}^{\infty }}\left( {{E}^{+}} \right)$ and for $G: E^{+}\to E^{-}$ the bundle isomorphism given by the Cliford multiplication by $d\tau$, where $\tau$ is the longitudinal coordinate along the neck.
\end{definition}
Recall that the Dirac-type condition asserts that the square $D^{2}$ has the same principal symbols as a Laplacian and that $D$ is independent of the longitudinal coordinate $\tau$ along the necks.

For our purposes, we will rather need to use the perturbed operator
\[\mathfrak{D} + \mathfrak{B} = \left( \begin{matrix}
   0 & \mathsf{\mathcal{D}}+B  \\
   {{{\mathsf{\mathcal{D}}}}^{*}}+{{B}^{*}} & 0  \\
\end{matrix} \right),\] 
where $B$ is an \emph{exponentially decaying operator} of order 0;  in other words,  there exists a pair of constants $C,\lambda >0$ for which
	\[\sup \left\{ \left| B\left( x \right) \right|\left| x\in \left[ \tau ,\tau +1 \right]\times N \right. \right\}\le C{{e}^{-\lambda \left| \tau  \right|}},\]
for all $\tau \in {{\mathbb{R}}^{+}}$.

A pair of ${{\mathbb{Z}}_{2}}$-graded cylindrical hermitian vector bundles ${{\hat{E}}_{i}}$ is glued together to provide a ${{\mathbb{Z}}_{2}}$-graded hermitian vector bundle ${{E}_{T}}=E_{T}^{+}\oplus E_{T}^{-}$ over the manifold ${{N}_{T}}$, where $T=\left| \log R \right|$ for a real parameter $R$ (cf. \S \ref{gluing_cylindrical_hermitian}). Moreover, a pair of cylindrical operators ${{\mathfrak{D}}_{i}}$ combine to give a ${{\mathbb{Z}}_{2}}$-graded Dirac-type operator ${{\mathfrak{D}}_{T}}$ on the bundle ${{E}_{T}}$. For a pair of perturbed operators, we can also obtain a perturbed Dirac-type operator defined on the bundle ${{E}_{T}}$; let us still denote this by ${{\mathfrak{D}}_{T}}$ and write such an operator as
	\[{{\mathfrak{D}}_{T}}=\left( \begin{matrix}
   0 & \mathsf{\mathcal{D}}_{T}^{*}  \\
   {{{\mathsf{\mathcal{D}}}}_{T}} & 0  \\
\end{matrix} \right).\]
Consider also ${{\mathfrak{D}}_{i,\infty }}:={{\mathfrak{D}}_{i}}+{{\mathfrak{B}}_{i}}$ for $i=1,2$ and write
	\[{{\mathfrak{D}}_{i,\infty }}=\left( \begin{matrix}
   0 & \mathsf{\mathcal{D}}_{i,\infty }^{*}  \\
   {{{\mathsf{\mathcal{D}}}}_{i,\infty }} & 0  \\
\end{matrix} \right).\]

We are going to need one last piece of notation to introduce:
\begin{definition}
Let $\hat{E}$ be a cylindrical vector bundle over the cylindrical manifold $\hat{N}$. We define the \emph{extended ${{L}^{2}}$ space $L_{\text{ext}}^{2}\left( \hat{N},\hat{E} \right)$} as the space of all sections $\hat{u}$ of $\hat{E}$, such that there exists an ${{L}^{2}}$ section ${{u}_{\infty }}$ of $E$ satisfying
	\[\hat{u}-{{\pi }^{*}}{{u}_{\infty }}\in {{L}^{2}}\left( N,E \right).\]
The section ${{u}_{\infty }}$ is uniquely determined by $\hat{u}$, thus the so-called \emph{asymptotic trace map} is well-defined
\begin{align*}
{{\partial }_{\infty }}:L_{\text{ext}}^{2}\left( \hat{N},\hat{E} \right) & \to {{L}^{2}}\left( N,E \right)\\
\hat{u} & \mapsto {{u}_{\infty }}.
\end{align*}
\end{definition}

The following theorem is the version of the Cappell-Lee-Miller gluing theorem, which we are going to apply. For a proof see \cite{Nicolaescuarticle}, \S 5.B:

\begin{theorem}[S. Cappell-R. Lee-E. Miller \cite{CLM}, L. Nicolaescu \cite{Nicolaescuarticle}]\label{CLM}
Let ${{\mathfrak{D}}_{i,\infty }}$ be a pair of ${{\mathbb{Z}}_{2}}$-graded Dirac-type operators on the cylindrical vector bundles ${{\hat{E}}_{i}}\to {{\hat{N}}_{i}}$ for $i=1,2$ as was defined above. Suppose that the kernel  $K_{i}^{+}\subseteq L_{\text{ext}}^{2}\left( {{{\hat{N}}}_{i}},{{{\hat{E}}}_{i}} \right)$ of the operator ${{\mathsf{\mathcal{D}}}_{i,\infty }}$ is trivial for $i=1,2$. Then there exist a ${{T}_{0}}>0$ and a constant $C>0$ such that the operator $\mathsf{\mathcal{D}}_{T}^{*}{{\mathsf{\mathcal{D}}}_{T}}$ is bijective for all $T>{{T}_{0}}$ and admits a bounded inverse ${{\left( \mathsf{\mathcal{D}}_{T}^{*}{{{\mathsf{\mathcal{D}}}}_{T}} \right)}^{-1}}:{{L}^{2}}\left( {{N}_{T}},E_{T}^{+} \right)\to {{L}^{2}}\left( {{N}_{T}},E_{T}^{+} \right)$ with 	
	\[{{\left\| {{\left( \mathsf{\mathcal{D}}_{T}^{*}{{{\mathsf{\mathcal{D}}}}_{T}} \right)}^{-1}} \right\|}_{\mathsf{\mathcal{L}}\left( {{L}^{2}},{{L}^{2}} \right)}}\le C{{T}^{2}}.\]
\end{theorem}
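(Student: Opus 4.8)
The plan is to deduce the resolvent bound from a uniform-in-$T$ spectral gap estimate for the non-negative self-adjoint operator $\mathsf{\mathcal{D}}_T^{*}\mathsf{\mathcal{D}}_T$. Since $N_T$ is closed and $\mathfrak{D}_T$ is elliptic, $\mathsf{\mathcal{D}}_T^{*}\mathsf{\mathcal{D}}_T$ is Fredholm with discrete spectrum, so it suffices to produce constants $c>0$ and $T_0>0$, independent of $T$, for which
\[\left\| \mathsf{\mathcal{D}}_T u \right\|_{L^2(N_T)} \ge c\,T^{-1}\left\| u \right\|_{L^2(N_T)}\]
for all $T>T_0$ and all $u\in C^\infty(N_T,E_T^{+})$. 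Such an inequality forces injectivity of $\mathsf{\mathcal{D}}_T$, hence of $\mathsf{\mathcal{D}}_T^{*}\mathsf{\mathcal{D}}_T$, and self-adjointness upgrades this to bijectivity; the identity $\left\langle \mathsf{\mathcal{D}}_T^{*}\mathsf{\mathcal{D}}_T u,u\right\rangle = \left\| \mathsf{\mathcal{D}}_T u\right\|^2 \ge c^2 T^{-2}\left\| u\right\|^2$ then yields $\left\| (\mathsf{\mathcal{D}}_T^{*}\mathsf{\mathcal{D}}_T)^{-1}\right\|_{\mathsf{\mathcal{L}}(L^2,L^2)} \le c^{-2}T^2$, which is the asserted bound with $C=c^{-2}$.

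The heart of the matter is the analysis along the neck. There $\mathsf{\mathcal{D}}_T$ has the constant-coefficient form $\mathsf{\mathcal{D}}_T=G(\partial_\tau-D)$ with $D$ self-adjoint on the cross-section $N$, and the compatibility conditions $G_1+G_2=L_1-L_2=0$ together with the isometries $\varphi$ and $\gamma$ guarantee that the two cap operators glue to a genuine self-adjoint Dirac-type operator on $N_T$. First I would decompose sections over the neck into the $L^2$-orthonormal eigenbasis of $D$. A component lying in an eigenspace of nonzero eigenvalue $\mu$ can only be assembled from the exponentials $e^{\pm\mu\tau}$, and such a mode obeys a lower bound $\left\| \mathsf{\mathcal{D}}_T u\right\|\ge c_0\left\| u\right\|$ with $c_0$ depending only on the nonzero spectrum of $D$ and independent of $T$, since it cannot remain small across a long neck without being large at one end. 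The only modes capable of defeating a $T$-independent estimate are those valued in $\ker D$, which are nearly $\tau$-independent along the neck; these are exactly the modes governed by the limiting operators.

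To handle the $\ker D$ modes I would invoke the Cappell-Lee-Miller parametrization of the low-lying spectrum in the form of Nicolaescu \cite{Nicolaescuarticle}. Matching an approximately $\tau$-independent neck section to honest solutions on the caps $\hat N_1,\hat N_2$ by means of the gluing isometry $\gamma$ and the asymptotic trace $\partial_\infty$, one shows that the space of \emph{small} eigenvalues, those of order $o(T^{-1})$, is canonically modeled on the extended-$L^2$ kernels $K_1^{+}\oplus K_2^{+}$ of $\mathsf{\mathcal{D}}_{1,\infty}$ and $\mathsf{\mathcal{D}}_{2,\infty}$, cut out by compatibility of the asymptotic traces. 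By hypothesis $K_1^{+}=K_2^{+}=0$, so $\mathsf{\mathcal{D}}_T$ possesses \emph{no} eigenvalues of order $o(T^{-1})$; the remaining low eigenvalues are of the \emph{large} type and bounded below by $c\,T^{-1}$. Combined with the $T$-independent control of the nonzero-eigenvalue modes, this establishes the inequality of the first paragraph. The exponentially decaying zeroth-order perturbation $\mathfrak{B}$ does not alter this picture: the bound $\sup\{\,|B(x)| : x\in[\tau,\tau+1]\times N\,\}\le Ce^{-\lambda|\tau|}$ renders its contribution over the neck negligible on the $T^{-1}$ scale, so it modifies only the cap operators and enters solely through the kernels $K_i^{+}$.

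The main obstacle is to make the correspondence between small eigenvalues and limiting kernels rigorous and uniform in $T$. I expect the cleanest route is a compactness and contradiction argument: suppose there were a sequence $T_k\to\infty$ and normalized sections $u_k$ with $T_k\left\| \mathsf{\mathcal{D}}_{T_k}u_k\right\|\to 0$; using interior elliptic estimates and the exponential decay of the nonzero modes, extract weak limits $u_\infty^{(i)}$ on each cap $\hat N_i$ lying in $L_{\text{ext}}^2(\hat N_i,\hat E_i)$ and satisfying $\mathsf{\mathcal{D}}_{i,\infty}u_\infty^{(i)}=0$, with asymptotic traces matched under $\gamma$. Triviality of $K_i^{+}$ then forces $u_\infty^{(i)}=0$, and the delicate point (the real crux) is a no-loss-of-mass estimate showing that the $L^2$ mass of $u_k$ cannot escape into the lengthening neck, contradicting $\left\| u_k\right\|=1$. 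Controlling precisely this escape of mass, and tracking the power of $T$ it generates, is what pins down the $T^2$ rather than a faster growth of the inverse.
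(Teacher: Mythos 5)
The first thing to observe is that the paper does not prove this statement at all: it is imported wholesale as the Cappell--Lee--Miller/Nicolaescu gluing theorem, and the paper's entire ``proof'' is the pointer to \cite{Nicolaescuarticle}, \S 5.B. So the relevant comparison is between your sketch and the argument in that reference. Your outline does reproduce its architecture faithfully: reduce the resolvent bound to a spectral-gap estimate $\left\| \mathcal{D}_T u \right\|_{L^2} \ge c\,T^{-1}\left\| u \right\|_{L^2}$ (and your first paragraph's logic --- ellipticity and self-adjointness on the closed manifold $N_T$ upgrading injectivity to bijectivity, with $C=c^{-2}$ --- is correct); split sections over the neck into eigenmodes of the cross-sectional operator $D$; dispose of the nonzero modes by a $T$-independent estimate; and identify eigenvalues of size $o\left(T^{-1}\right)$ with matched asymptotic traces of the extended-$L^2$ kernels $K_i^{+}$, so that $K_1^{+}=K_2^{+}=0$ excludes them.

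However, as a proof the proposal has a genuine gap, located exactly where you place it. Everything hinges on the claim that eigenvalues below the $T^{-1}$ scale are parametrized by matched extended-$L^2$ kernels, and your justification is a compactness/contradiction scheme whose decisive step --- the no-loss-of-mass estimate --- is explicitly left as something you ``expect'' can be arranged. This is not a routine verification: a normalized sequence $u_k$ with $T_k\left\| \mathcal{D}_{T_k}u_k\right\| \to 0$ can concentrate essentially all of its mass on the neck in nearly $\tau$-independent $\ker D$-modes, in which case every weak limit you extract on the caps vanishes and no contradiction results. Ruling this out is precisely the finite-dimensional reduction of Cappell--Lee--Miller/Nicolaescu: one must show that an approximate solution in the kernel modes is exponentially close to a genuine $\tau$-independent mode, whose neck mass is comparable to $T\left| {{\partial }_{\infty }}u \right|^2$ and whose boundary values are therefore forced to nearly match traces of honest extended-$L^2$ solutions on the caps; quantifying ``nearly'' is what produces both the $T^{-1}$ threshold separating small from large eigenvalues and the $T^2$ in the final bound. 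A secondary looseness: the $T$-independent bound for nonzero modes cannot be asserted for the global operator, since the eigenmode decomposition of $D$ exists only over the neck; it must be patched against cap estimates with cutoffs. None of this is wrong in conception --- your roadmap is the right one --- but the quantitative crux of the theorem is deferred rather than proven, which is to say the proposal reconstructs the statement's proof strategy without supplying the estimate that makes it a theorem.
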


\subsection{The elliptic complex over the complex connected sum}
For our approximate solution $\left( A_{R}^{app},\Phi _{R}^{app} \right)$ constructed over ${{X}_{\#}}$ with $0<R<1$ and $T=-\log R$, consider the elliptic complex:
\begin{align*}
0\xrightarrow{{}}{{\Omega }^{0}}\left( {{X}_{\#}},{{E}_{H}}\left( {{\mathfrak{h}}^{\mathbb{C}}} \right) \right) & \xrightarrow{{{L}_{1,T}}}{{\Omega }^{1}}\left( {{X}_{\#}},{{E}_{H}}\left( {{\mathfrak{h}}^{\mathbb{C}}} \right) \right)\oplus {{\Omega }^{1,0}}\left( {{X}_{\#}},{{E}_{H}}\left( {{\mathfrak{g}}^{\mathbb{C}}} \right) \right)\\
& \xrightarrow{{{L}_{2,T}}}{{\Omega }^{2}}\left( {{X}_{\#}},{{E}_{H}}\left( {{\mathfrak{h}}^{\mathbb{C}}} \right) \right)\oplus {{\Omega }^{2}}\left( {{X}_{\#}},{{E}_{H}}\left( {{\mathfrak{g}}^{\mathbb{C}}} \right) \right)\xrightarrow{{}}0,
\end{align*}
where \[{{L}_{1,T}}\gamma =\left( {{d}_{A_{R}^{app}}}\gamma ,\left[ \Phi _{R}^{app},\gamma  \right] \right)\]
is the linearization of the complex gauge group action and \[{{L}_{2,T}}\left( \alpha ,\varphi  \right)=D\mathsf{\mathcal{H}}\left( \alpha ,\varphi  \right)=\left( \begin{matrix}
   {{d}_{A_{R}^{app}}}\alpha +\left[ \Phi _{R}^{app},-\tau \left( \varphi  \right)\right]+\left[ \varphi ,-\tau \left( \Phi _{R}^{app} \right) \right]   \\
   {{{\bar{\partial }}}_{A_{R}^{app}}}\varphi +\left[ \alpha ,\Phi _{R}^{app} \right]  \\
\end{matrix} \right)\]
is the differential of the Hitchin operator considered in \S \ref{Hitchin_operator}.

Note that in general it does not hold that ${{L}_{2,T}}{{L}_{1,T}}=\left[ {{F}_{A_{R}^{app}}},\gamma  \right]+\left[ \left[ \Phi _{R}^{app},-\tau \left( \Phi _{R}^{app} \right) \right],\gamma  \right]=0$, since $\left( A_{R}^{app},\Phi _{R}^{app} \right)$ need not be an exact solution. Decomposing ${{\Omega }^{*}}\left( {{X}_{\#}},{{E}_{H}}\left( {{\mathfrak{g}}^{\mathbb{C}}} \right) \right)$ into forms of even, respectively odd total degree, we may introduce the  ${{\mathbb{Z}}_{2}}$-graded Dirac-type operator
	\[{{\mathfrak{D}}_{T}}:=\left( \begin{matrix}
   0 & L_{1,T}^{*}+{{L}_{2,T}}  \\
   {{L}_{1,T}}+L_{2,T}^{*} & 0  \\
\end{matrix} \right)\]
on the closed surface ${{X}_{\#}}$.

As $R\searrow 0$, the curve ${{X}_{\#}}$ degenerates to a nodal surface $X_{\#}^{\times }$ (equivalently, the cylindrical neck of ${{X}_{\#}}$ extends infinitely). For the cut-off functions ${{\chi }_{R}}$ that we considered in obtaining the approximate pair $\left( A_{R}^{app},\Phi _{R}^{app} \right)$, their support will tend to be empty as $R\searrow 0$, therefore the ``error regions'' disappear along with the neck $\Omega$, thus $\left( A_{R}^{app},\Phi _{R}^{app} \right)\to \left( {{A}_{0}},{{\Phi }_{0}} \right)$ uniformly on compact subsets with
	\[\left( A_{0}^{app},\Phi _{0}^{app} \right)=\left\{ \begin{matrix}
   \left( {{A}_{1}},{{\Phi }_{1}} \right),\,\,\,{{X}_{1}}\backslash \Omega  \\
   \left( {{A}_{2}},{{\Phi }_{2}} \right),\,\,\,{{X}_{2}}\backslash \Omega  \\
\end{matrix} \right.\]
an exact solution with the holonomy of the associated flat connection in $G$.

For $T=\infty $ the elliptic complex for the exact solution $\left( A_{0}^{app},\Phi _{0}^{app} \right)$ gives rise to the Dirac-type operator
\[{{\mathfrak{D}}_{\infty }}=\left( \begin{matrix}
   0 & L_{1}^{*}+{{L}_{2}}  \\
   {{L}_{1}}+L_{2}^{*} & 0  \\
\end{matrix} \right).\]
We now describe the map ${{L}_{1}}+L_{2}^{*}$ more closely. Using the Hodge $*$-operator we can identify
\[{{\Omega }^{2}}\left( X_{\#}^{\times },{{E}_{H}}\left( {{\mathfrak{h}}^{\mathbb{C}}} \right) \right)\cong {{\Omega }^{0}}\left( X_{\#}^{\times },{{E}_{H}}\left( {{\mathfrak{h}}^{\mathbb{C}}} \right) \right) \text{ and } {{\Omega }^{2}}\left( X_{\#}^{\times },{{E}_{H}}\left( {{\mathfrak{g}}^{\mathbb{C}}} \right) \right)\cong {{\Omega }^{0}}\left( X_{\#}^{\times },{{E}_{H}}\left( {{\mathfrak{g}}^{\mathbb{C}}} \right) \right)\]
as well as ${{\Omega }^{1}}\left( X_{\#}^{\times },{{E}_{H}}\left( {{\mathfrak{h}}^{\mathbb{C}}} \right) \right)\cong {{\Omega }^{0,1}}\left( {{X}_{\#}},{{E}_{H}}\left( {{\mathfrak{g}}^{\mathbb{C}}} \right) \right)$ via the projection $A\mapsto {{\pi }^{0,1}}A$. We further identify \[\left( {{\gamma }_{1}},{{\gamma }_{2}} \right)\in {{\Omega }^{0}}\left( X_{\#}^{\times },{{E}_{H}}\left( {{\mathfrak{h}}^{\mathbb{C}}} \right) \right)\oplus {{\Omega }^{0}}\left( X_{\#}^{\times },{{E}_{H}}\left( {{\mathfrak{h}}^{\mathbb{C}}} \right) \right)\] with ${{\psi }_{1}}={{\gamma }_{1}}+i{{\gamma }_{2}}\in {{\Omega }^{0}}\left( X_{\#}^{\times },{{E}_{H}}\left( {{\mathfrak{g}}^{\mathbb{C}}} \right) \right)$. The operator ${{L}_{1}}+L_{2}^{*}$ can be now expressed as the map
\begin{align*}
{{\Omega }^{0}}\left( X_{\#}^{\times },{{E}_{H}}\left( {{\mathfrak{g}}^{\mathbb{C}}} \right) \right)\oplus {{\Omega }^{0}}\left( X_{\#}^{\times },{{E}_{H}}\left( {{\mathfrak{g}}^{\mathbb{C}}} \right) \right) & \to {{\Omega }^{0,1}}\left( X_{\#}^{\times },{{E}_{H}}\left( {{\mathfrak{g}}^{\mathbb{C}}} \right) \right)\oplus {{\Omega }^{1,0}}\left( X_{\#}^{\times },{{E}_{H}}\left( {{\mathfrak{g}}^{\mathbb{C}}} \right) \right)\\
\left( {{\psi }_{1}},{{\psi }_{2}} \right) & \mapsto \left( \begin{matrix}
   {{{\bar{\partial }}}_{A_{0}^{app}}}{{\psi }_{1}}+\left[ {{\psi }_{2}},-\tau \left( \Phi _{0}^{app} \right) \right]  \\
   {{\partial }_{A_{0}^{app}}}{{\psi }_{2}}+\left[ {{\psi }_{1}},\Phi _{0}^{app} \right]  \\
\end{matrix} \right).
\end{align*}

\subsection{${{\mathfrak{D}}_{\infty }}$ is an exponentially small perturbation of a cylindrical operator}

Consider the operator ${{\mathfrak{\hat{D}}}_{\infty }}:=\left( \begin{matrix}
   0 & \hat{L}_{1}^{*}+{{{\hat{L}}}_{2}}  \\
   {{{\hat{L}}}_{1}}+\hat{L}_{2}^{*} & 0  \\
\end{matrix} \right)$
arising similarly from the elliptic complex for some model solution $\left( {{A}^{\bmod }},{{\Phi }^{\bmod }} \right)$ replacing $\left( A_{0}^{app},\Phi _{0}^{app} \right)$, and for which
	\[\left( {{A}^{\bmod }},{{\Phi }^{\bmod }} \right)=\left( 0,\varphi \frac{dz}{z} \right)\]
along each cylindrical neck. The operator ${{\mathfrak{\hat{D}}}_{\infty }}$ is in fact cylindrical. Indeed, introducing the complex coordinate $\zeta =\tau +i\theta $, we have the identities $d\tau =-\frac{dr}{r}$, $d\theta =-d\theta $, $\frac{dz}{z}=-d\zeta $, and $\frac{d\bar{z}}{{\bar{z}}}=-d\bar{\zeta }$. Hence the operator ${{\hat{L}}_{1}}+\hat{L}_{2}^{*}$ (as well as the operator $\hat{L}_{1}^{*}+{{\hat{L}}_{2}}$ similarly) can be written as a cylindrical differential operator ${{\hat{L}}_{1}}+\hat{L}_{2}^{*}=\frac{\sqrt{2}}{2}G\left( {{\partial }_{\tau }}-D \right)$ with
\[{{\hat{L}}_{1}}+\hat{L}_{2}^{*}: \left( {{\psi }_{1}},{{\psi }_{2}} \right)\mapsto \frac{1}{2}\left( \begin{matrix}
   {{\partial }_{\tau }}{{\psi }_{1}}d\bar{\zeta }  \\
   {{\partial }_{\tau }}{{\psi }_{2}}d\zeta   \\
\end{matrix} \right)-\left( \begin{matrix}
   \left( \frac{i}{2}{{\partial }_{\theta }}{{\psi }_{1}}+\left[ {{\psi }_{2}},\tau \left( \varphi  \right) \right] \right)d\bar{\zeta }  \\
   \left( -\frac{i}{2}{{\partial }_{\theta }}{{\psi }_{2}}-\left[ {{\psi }_{1}},\varphi  \right] \right)d\zeta   \\
\end{matrix} \right),\]
where
\begin{equation}\label{form_of_D}
  D\left( {{\psi }_{1}},{{\psi }_{2}} \right):=2\left( \begin{matrix}
   {\frac{i}{2}{{\partial }_{\theta }}{{\psi }_{1}}+\left[ {{\psi }_{2}},\tau \left( \varphi  \right) \right]}  \\
   {-\frac{i}{2}{{\partial }_{\theta }}{{\psi }_{2}}-\left[ {{\psi }_{1}},\varphi  \right]}  \\
\end{matrix} \right)
\end{equation}
and $G:\left( {{\psi }_{1}},{{\psi }_{2}} \right) \mapsto \frac{\sqrt{2}}{2}\left( {{\psi }_{1}}d\bar{\zeta },{{\psi }_{2}}d\zeta  \right)$ denotes the Clifford multiplication by $d\tau $.

By construction of the approximate solution  $\left( A_{R}^{app},\Phi _{R}^{app} \right)$ and the decay described in Lemma \ref{Biq_Bo_lemma}, one sees that the operator ${{\mathfrak{D}}_{\infty }}$ is indeed an exponentially small perturbation of ${{\mathfrak{\hat{D}}}_{\infty }}$; see \cite{Swoboda}, p. 667 for more details.

\subsection{The space $\ker \left( {{L}_{1}}+L_{2}^{*} \right)\cap L_{\text{ext}}^{2}\left( X_{\#}^{\times } \right)$ is trivial}\label{invertibility}

We now restrict to the case $G=\text{Sp(4}\text{,}\mathbb{R}\text{)}$ in order to study the space $\ker \left( {{L}_{1}}+L_{2}^{*} \right)\cap L_{\text{ext}}^{2}\left( X_{\#}^{\times } \right)$ for the operator ${{\mathfrak{D}}_{\infty }}$ more closely. We are also taking here into consideration the particular model Higgs field we picked for the $G=\text{Sp(4}\text{,}\mathbb{R}\text{)}$-Hitchin equations coming from the particular embeddings ${{\phi }_{irr}}$ and $\psi$ from (\ref{irreducible_rep}) and (\ref{diagonal_Fuchsian}). In other words, we fix
	\[\varphi \equiv {{\varphi }^{\bmod }}=\left( \begin{matrix}
   3C & 0 & 0 & 0  \\
   0 & C & 0 & 0  \\
   0 & 0 & -3C & 0  \\
   0 & 0 & 0 & -C  \\
\end{matrix} \right),\]
for a nonzero real constant $C$. Moreover, the compact real form on $\varphi$ in this case is $\tau \left( \varphi  \right)=-{{\varphi }^{*}}$. We have the following:
\begin{proposition}\label{asympt_trace}
Let $\left( {{\psi }_{1}},{{\psi }_{2}} \right)\in \ker \left( {{L}_{1}}+L_{2}^{*} \right)\cap L_{\text{ext}}^{2}\left( X_{\#}^{\times } \right)$. Then its asymptotic trace is described by
	\[{{\partial }_{\infty }}\left( {{\psi }_{1}},{{\psi }_{2}} \right)=\left( \left( \begin{matrix}
   {{a}_{1}} & 0 & 0 & 0  \\
   0 & {{d}_{1}} & 0 & 0  \\
   0 & 0 & -{{a}_{1}} & 0  \\
   0 & 0 & 0 & -{{d}_{1}}  \\
\end{matrix} \right),\left( \begin{matrix}
   {{a}_{2}} & 0 & 0 & 0  \\
   0 & {{d}_{2}} & 0 & 0  \\
   0 & 0 & -{{a}_{2}} & 0  \\
   0 & 0 & 0 & -{{d}_{2}}  \\
\end{matrix} \right) \right)\]
for constants ${{a}_{i}},{{d}_{i}}\in \mathbb{C}$, for $i=1,2$.
\end{proposition}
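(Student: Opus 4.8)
The plan is to reduce the computation of the asymptotic trace to solving a constant-coefficient ODE on the cross-sectional circle, and then to exploit that the eigenvalues of $\operatorname{ad}(\varphi)$ on $\mathfrak{sp}(4,\mathbb{C})$ are real. First I would invoke the cylindrical picture of \S6.2: along each neck the operator $L_1+L_2^{*}$ has the form $\tfrac{\sqrt2}{2}G(\partial_\tau-D)$, with $D$ the self-adjoint tangential operator of (6.1), and $\mathfrak{D}_\infty$ is an exponentially small perturbation of the exactly cylindrical $\hat{\mathfrak{D}}_\infty$. By the asymptotic theory of cylindrical Dirac-type operators underlying the Cappell--Lee--Miller gluing theorem recalled in \S5.3, any $(\psi_1,\psi_2)\in\ker(L_1+L_2^{*})\cap L^2_{\mathrm{ext}}(X_\#^{\times})$ has an asymptotic trace $\partial_\infty(\psi_1,\psi_2)$ that is $\tau$-independent and lies in $\ker D$, the null space of $D$ acting on sections over the circle $(S^1)_\theta$. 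It therefore suffices to determine $\ker D$ for the chosen $\varphi$.

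Since $\varphi$ is real and diagonal we have $\tau(\varphi)=-\varphi^{*}=-\varphi$, so (6.1) becomes the coupled first-order system
\[\tfrac{i}{2}\partial_\theta\psi_1+[\varphi,\psi_2]=0,\qquad \tfrac{i}{2}\partial_\theta\psi_2-[\varphi,\psi_1]=0.\]
Next I would diagonalize $\operatorname{ad}(\varphi)$ on $\mathfrak{sp}(4,\mathbb{C})$: writing $\psi_a=\sum_{j,k}c^{(a)}_{jk}E_{jk}$ in matrix units and using $[\varphi,E_{jk}]=(\lambda_j-\lambda_k)E_{jk}$ with $(\lambda_1,\dots,\lambda_4)=(3C,C,-3C,-C)$, the system decouples over each root component $(j,k)$ into a $2\times 2$ system governed by $\mu_{jk}:=\lambda_j-\lambda_k$. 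Setting $w=c^{(1)}_{jk}+ic^{(2)}_{jk}$ turns this into $\partial_\theta w=2\mu_{jk}\,w$, whose solutions are multiples of $e^{2\mu_{jk}\theta}$.

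Because $C\in\mathbb{R}\setminus\{0\}$, every $\mu_{jk}$ is real, so $e^{2\mu_{jk}\theta}$ is $2\pi$-periodic on $S^1$ precisely when $\mu_{jk}=0$; as the four eigenvalues are pairwise distinct, this occurs only for $j=k$. Hence the periodic (equivalently, $\ker D$) solutions are exactly the constant diagonal matrices. Imposing membership in $\mathfrak{sp}(4,\mathbb{C})$, that is $X^{T}J+JX=0$, on a diagonal $X=\operatorname{diag}(x_1,x_2,x_3,x_4)$ forces $x_1+x_3=x_2+x_4=0$; this is precisely the centralizer (a Cartan subalgebra) of the regular semisimple element $\varphi$. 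This yields the stated form of $\partial_\infty(\psi_1,\psi_2)$, with $(a_i,d_i)$ the two free diagonal parameters of $\psi_i^{\infty}$.

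The only genuinely non-elementary step is the first one: justifying that the asymptotic trace of an $L^2_{\mathrm{ext}}$ kernel element is $\tau$-independent and governed by $\ker D$ of the limiting cylindrical operator. This rests on the exponential-decay estimate of \S6.2 together with the separation-of-variables analysis of constant-coefficient Dirac-type operators on a half-cylinder; once it is in place, everything that remains is the linear-algebra computation sketched above.
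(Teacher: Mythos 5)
Your proposal is correct and follows the same overall skeleton as the paper's proof: both first invoke the Cappell--Lee--Miller/Nicolaescu asymptotic theory (the paper quotes \cite{Nicolaescuarticle}, p.~169, together with the exponential-decay statement of \S 6.2) to identify the asymptotic traces of elements of $\ker\left( L_1+L_2^{*} \right)\cap L^2_{\mathrm{ext}}\left( X_{\#}^{\times} \right)$ with a subspace of $\ker D$, for $D$ the tangential operator (6.1), and then compute $\ker D$ explicitly. The difference lies in how that kernel is computed. The paper expands in Fourier modes $e^{ij\vartheta}$, splits endomorphisms into diagonal and off-diagonal parts, and reduces (6.2) to $2\times 2$ algebraic systems such as (6.3), whose determinant $\left( j/2 \right)^2+4C^2>0$ forces the off-diagonal part to vanish. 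You instead decompose into root spaces of $\mathrm{ad}(\varphi)$ (matrix units $E_{jk}$ with eigenvalues $\mu_{jk}=\lambda_j-\lambda_k$) and solve the resulting constant-coefficient ODEs on $S^1$, where the realness of every $\mu_{jk}$ (from $C\in\mathbb{R}\setminus\{0\}$) makes the exponentials $e^{\pm 2\mu_{jk}\theta}$ non-periodic unless $\mu_{jk}=0$; since the four eigenvalues of $\varphi$ are pairwise distinct, only constant diagonal solutions survive, and intersecting with $\mathfrak{sp}(4,\mathbb{C})$ gives the stated form $\mathrm{diag}(a_i,d_i,-a_i,-d_i)$. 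The two computations are dual --- Fourier variable first versus Lie-algebra grading first --- and rest on the same mechanism, but yours is more conceptual and visibly more general: it applies verbatim whenever the model Higgs field is a regular semisimple element with real eigenvalue differences, which is exactly the direction of generalization the paper points to in Remark 6.4. One small point to complete your argument: for $j\neq k$ the coefficient pair $\left( c^{(1)}_{jk},c^{(2)}_{jk} \right)$ satisfies a two-dimensional system, so besides $w=c^{(1)}_{jk}+ic^{(2)}_{jk}$ you must also track $v=c^{(1)}_{jk}-ic^{(2)}_{jk}$, which satisfies $\partial_\theta v=-2\mu_{jk}v$; the identical periodicity argument kills it, so the conclusion is unchanged.
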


\begin{proof}
By \cite{Nicolaescuarticle}, p. 169, the space of asymptotic traces of $\ker \left( {{L}_{1}}+L_{2}^{*} \right)$ is a subspace of $\ker D$ with $D$ as defined in (\ref{form_of_D}). We will check that the elements of the latter have the asserted form. Consider the Fourier decomposition $\left( {{\psi }_{1}},{{\psi }_{2}} \right)=\left( \sum\nolimits_{j\in \mathbb{Z}}{{{\psi }_{1,j}}{{e}^{ij\vartheta }},\sum\nolimits_{j\in \mathbb{Z}}{{{\psi }_{2,j}}{{e}^{ij\vartheta }}}} \right)$, where
	\[{{\psi }_{i,j}}\in \mathfrak{sp}\left( 4,\mathbb{C} \right)=\left\{ \left( \begin{matrix}
   A & B  \\
   C & -{{A}^{T}}  \\
\end{matrix} \right)\left| A,B,C\in {{M}_{2\times 2}}\left( \mathbb{C} \right);\,\,{{B}^{T}}=B,{{C}^{T}}=C \right. \right\}.\]
Then the equation $D\left( {{\psi }_{1}},{{\psi }_{2}} \right)=0$ is equivalent to the system of linear equations
\begin{equation}\label{linear_system}
  \left( \begin{matrix}
   -\frac{j}{2}{{\psi }_{1,j}}+\left[ {{\varphi }^{*}},{{\psi }_{2,j}} \right]  \\
   \frac{j}{2}{{\psi }_{2,j}}+\left[ \varphi ,{{\psi }_{1,j}} \right]  \\
\end{matrix} \right)=0
\end{equation}
for $j\in \mathbb{Z}$. Since the Higgs field $\varphi$ is diagonal, the operator $D$ acts invariantly on diagonal, respectively off-diagonal endomorphisms. It therefore suffices to consider these two cases separately. \\
\textbf{Case 1.} Let $\left( {{\psi }_{1,j}},{{\psi }_{2,j}} \right)=\left( \left( \begin{matrix}
   {{a}_{1,j}} & 0 & 0 & 0  \\
   0 & {{d}_{1,j}} & 0 & 0  \\
   0 & 0 & -{{a}_{1,j}} & 0  \\
   0 & 0 & 0 & -{{d}_{1,j}}  \\
\end{matrix} \right),\left( \begin{matrix}
   {{a}_{2,j}} & 0 & 0 & 0  \\
   0 & {{d}_{2,j}} & 0 & 0  \\
   0 & 0 & -{{a}_{2,j}} & 0  \\
   0 & 0 & 0 & -{{d}_{2,j}}  \\
\end{matrix} \right) \right)$, with ${{a}_{i,j}},{{d}_{i,j}}\in \mathbb{C}$ for $i=1,2$. Then Equation (\ref{linear_system}) is equivalent to the pair of equations \[\frac{j}{2}\left( \begin{matrix}
   {{a}_{i,j}} & 0 & 0 & 0  \\
   0 & {{d}_{i,j}} & 0 & 0  \\
   0 & 0 & -{{a}_{i,j}} & 0  \\
   0 & 0 & 0 & -{{d}_{i,j}}  \\
\end{matrix} \right)=\mathbb{O},\text{ for }i=1,2,\]
thus the system has a non-trivial solution if and only if $j=0$. In other words,
${{\psi }_{1}}={{\psi }_{1,0}}$ and ${{\psi }_{2}}={{\psi }_{2,0}}$ are of the asserted form. \\
\textbf{Case 2.} Let now $\left( {{\psi }_{1,j}},{{\psi }_{2,j}} \right)=\left( \left( \begin{matrix}
   0 & {{b}_{1,j}} & {{e}_{1,j}} & {{f}_{1,j}}  \\
   {{c}_{1,j}} & 0 & {{f}_{1,j}} & {{g}_{1,j}}  \\
   {{k}_{1,j}} & {{l}_{1,j}} & 0 & -{{c}_{1,j}}  \\
   {{l}_{1,j}} & {{m}_{1,j}} & -{{b}_{1,j}} & 0  \\
\end{matrix} \right),\left( \begin{matrix}
   0 & {{b}_{2,j}} & {{e}_{2,j}} & {{f}_{2,j}}  \\
   {{c}_{2,j}} & 0 & {{f}_{2,j}} & {{g}_{2,j}}  \\
   {{k}_{2,j}} & {{l}_{2,j}} & 0 & -{{c}_{2,j}}  \\
   {{l}_{2,j}} & {{m}_{2,j}} & -{{b}_{2,j}} & 0  \\
\end{matrix} \right) \right)$ with all entries in $\mathbb{C}$. Then Equation (\ref{linear_system}) reads as the pair of equations
\[-\frac{j}{2}\left( \begin{matrix}
   0 & {{b}_{1,j}} & {{e}_{1,j}} & {{f}_{1,j}}  \\
   {{c}_{1,j}} & 0 & {{f}_{1,j}} & {{g}_{1,j}}  \\
   {{k}_{1,j}} & {{l}_{1,j}} & 0 & -{{c}_{1,j}}  \\
   {{l}_{1,j}} & {{m}_{1,j}} & -{{b}_{1,j}} & 0  \\
\end{matrix} \right)=\left( \begin{matrix}
   0 & -2{{b}_{2,j}}C & -6{{e}_{2,j}}C & -4{{f}_{2,j}}C  \\
   2{{c}_{2,j}}C & 0 & -4{{f}_{2,j}}C & -2{{g}_{2,j}}C  \\
   6{{k}_{2,j}}C & 4{{l}_{2,j}}C & 0 & -2{{c}_{2,j}}C  \\
   4{{l}_{2,j}}C & 2{{m}_{2,j}}C & 2{{b}_{2,j}}C & 0  \\
\end{matrix} \right)\] and
	\[\frac{j}{2}\left( \begin{matrix}
   0 & {{b}_{2,j}} & {{e}_{2,j}} & {{f}_{2,j}}  \\
   {{c}_{2,j}} & 0 & {{f}_{2,j}} & {{g}_{2,j}}  \\
   {{k}_{2,j}} & {{l}_{2,j}} & 0 & -{{c}_{2,j}}  \\
   {{l}_{2,j}} & {{m}_{2,j}} & -{{b}_{2,j}} & 0  \\
\end{matrix} \right)=\left( \begin{matrix}
   0 & -2{{b}_{1,j}}C & -6{{e}_{1,j}}C & -4{{f}_{1,j}}C  \\
   2{{c}_{1,j}}C & 0 & -4{{f}_{1,j}}C & -2{{g}_{1,j}}C  \\
   6{{k}_{1,j}}C & 4{{l}_{1,j}}C & 0 & -2{{c}_{1,j}}C  \\
   4{{l}_{1,j}}C & 2{{m}_{1,j}}C & 2{{b}_{1,j}}C & 0  \\
\end{matrix} \right).\]
This pair of equations is then equivalent to the equation
\begin{equation}\label{basic_2by2}
  \left( \begin{matrix}
   \frac{j}{2} & -2C  \\
   2C & \frac{j}{2}  \\
\end{matrix} \right)\left( \begin{matrix}
   {{b}_{1,j}}  \\
   {{b}_{2,j}}  \\
\end{matrix} \right)=\left( \begin{matrix}
   0  \\
   0  \\
\end{matrix} \right)
\end{equation}
and seven more similar equations involving the ${{c}_{i,j}},{{e}_{i,j}},{{f}_{i,j}},{{g}_{i,j}},{{k}_{i,j}},{{l}_{i,j}},{{m}_{i,j}}$, for $i=1,2$ and $j\in \mathbb{Z}$. Since $C\ne 0$, the determinant of the $2\times 2$ matrix in Equation (\ref{basic_2by2}) is ${{\left( \frac{j}{2} \right)}^{2}}+4C^2>0$, and so this system has no non-trivial solution for $\left({{b}_{1,j}}, {{b}_{2,j}}\right)$; the same is true for the remaining seven equations. Therefore, there are no non-trivial off-diagonal elements in $\ker D$ and so the only non-trivial elements are of the asserted form in the proposition.
\end{proof}

\begin{lemma}\label{dpsi}
Suppose $\left( {{\psi }_{1}},{{\psi }_{2}} \right)\in \ker \left( {{L}_{1}}+L_{2}^{*} \right)\cap L_{\text{ext}}^{2}\left( X_{\#}^{\times } \right)$. Then 
\[{{d}_{A_{0}^{app}}}{{\psi }_{i}}=\left[ {{\psi }_{i}},\Phi _{0}^{app} \right]=\left[ {{\psi }_{i}},{{\left( \Phi _{0}^{app} \right)}^{*}} \right]=0,\] 
for $i=1,2$.
\end{lemma}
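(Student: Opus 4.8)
The plan is to upgrade the two first-order equations encoded in the hypothesis $(\psi_1,\psi_2)\in\ker(L_1+L_2^*)$ into a single Bochner identity and then to run an integration-by-parts argument on the noded surface $X_\#^\times$, in the spirit of Lemma 5.1 but now carefully controlling the behaviour at the infinite cylindrical ends. Spelling out the formula for $L_1+L_2^*$ from §6.1 and using that $\tau(\Phi_0^{app})=-(\Phi_0^{app})^*$, the condition $(L_1+L_2^*)(\psi_1,\psi_2)=0$ becomes the coupled system
\[
\bar\partial_{A_0^{app}}\psi_1=\bigl[(\Phi_0^{app})^{*},\psi_2\bigr],\qquad \partial_{A_0^{app}}\psi_2=\bigl[\Phi_0^{app},\psi_1\bigr].
\]
Since $\left(A_0^{app},\Phi_0^{app}\right)$ is an \emph{exact} solution of the Hitchin equations on each side $X_l^\times,X_r^\times$, the element $\left((\psi_1,\psi_2),0\right)$ lies in the kernel of the self-adjoint Dirac operator $\mathfrak{D}_\infty$; it therefore suffices to show that an $L^2_{\text{ext}}$-kernel element is covariantly constant and commutes with both $\Phi_0^{app}$ and $(\Phi_0^{app})^{*}$.

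First I would record the Bochner formula for $\mathcal{D}_\infty^{*}\mathcal{D}_\infty$, where $\mathcal{D}_\infty:=L_1+L_2^*$. Proceeding exactly as in the proof of Lemma 5.1 — that is, through the K\"ahler identities satisfied by the operators ${D}'=\partial_{A_0^{app}}+\tau(\Phi_0^{app})$ and ${D}''=\bar\partial_{A_0^{app}}+\Phi_0^{app}$ and Simpson's computation \cite{Simpson-variations} — and substituting the first Hitchin equation $F_{A_0^{app}}=\bigl[\Phi_0^{app},\tau(\Phi_0^{app})\bigr]$ to absorb the curvature contribution, one obtains a Weitzenb\"ock identity of the form
\[
\mathcal{D}_\infty^{*}\mathcal{D}_\infty(\psi_1,\psi_2)=\nabla_{A_0^{app}}^{*}\nabla_{A_0^{app}}(\psi_1,\psi_2)+\mathcal{R}(\psi_1,\psi_2),
\]
in which the zeroth-order operator $\mathcal{R}$ satisfies the pointwise nonnegativity $\bigl\langle \mathcal{R}(\psi_1,\psi_2),(\psi_1,\psi_2)\bigr\rangle=\bigl|[\Phi_0^{app},(\psi_1,\psi_2)]\bigr|^2+\bigl|[(\Phi_0^{app})^{*},(\psi_1,\psi_2)]\bigr|^2$, precisely as the term $2\|[\Phi,\gamma]\|^2$ arises in Lemma 5.1 (here the two brackets appear separately because $\psi_i$ is $\mathfrak{g}^{\mathbb C}$-valued rather than $\mathfrak{h}$-valued).

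Finally, since $\mathcal{D}_\infty(\psi_1,\psi_2)=0$, pairing the identity with $(\psi_1,\psi_2)$ and integrating over $X_\#^\times$ makes the left-hand side vanish, while the $\nabla^{*}\nabla$-term integrates by parts to $\|\nabla_{A_0^{app}}(\psi_1,\psi_2)\|_{L^2}^2$ plus a boundary integral over the cross-sections $\{\tau=T\}\times N$ of the cylindrical ends. The main obstacle — and the only genuinely analytic point — is to show that this boundary term tends to $0$ as $T\to\infty$; this is where the hypotheses $(\psi_1,\psi_2)\in L^2_{\text{ext}}$ and Proposition 6.1 are essential. Indeed, by Proposition 6.1 the asymptotic trace $\partial_\infty(\psi_1,\psi_2)$ is a $\tau$-independent \emph{diagonal} matrix, hence covariantly constant along each neck and commuting with the diagonal model field $\varphi^{\bmod}$; combined with the $L^2$-decay of $(\psi_1,\psi_2)-\pi^{*}\partial_\infty(\psi_1,\psi_2)$ furnished by Lemma 4.1, the boundary integrand vanishes in the limit (cf. \cite{Swoboda}). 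One is left with
\[
\bigl\|\nabla_{A_0^{app}}(\psi_1,\psi_2)\bigr\|_{L^2}^2+\bigl\|[\Phi_0^{app},(\psi_1,\psi_2)]\bigr\|_{L^2}^2+\bigl\|[(\Phi_0^{app})^{*},(\psi_1,\psi_2)]\bigr\|_{L^2}^2=0,
\]
and since each summand is nonnegative, all three vanish. Reading off the components gives $d_{A_0^{app}}\psi_i=0$, $[\psi_i,\Phi_0^{app}]=0$ and $[\psi_i,(\Phi_0^{app})^{*}]=0$ for $i=1,2$, as asserted.
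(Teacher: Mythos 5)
The paper does not prove this lemma itself but simply cites Step 1 of Swoboda's Lemma 3.11, and your argument is essentially a reconstruction of that cited proof: the same Weitzenb\"ock/integration-by-parts mechanism for $\mathcal{D}_\infty = L_1+L_2^*$ at an exact solution, with the boundary terms over the cylindrical cross-sections controlled via the asymptotic-trace description (Proposition 6.1) and the decay built into $L^2_{\text{ext}}$. One small citation slip: the $L^2$-decay of $(\psi_1,\psi_2)-\pi^{*}\partial_\infty(\psi_1,\psi_2)$ comes from the very definition of $L^2_{\text{ext}}$ (Definition 5.8), not from Lemma 4.1, which concerns the decay of solutions $(A,\Phi)$ to the model pair rather than of kernel elements.
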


\begin{proof}
By definition of the operator $\left( {{L}_{1}}+L_{2}^{*} \right)$, an element $\left( {{\psi }_{1}},{{\psi }_{2}} \right)$ lies in the kernel of this operator if and only if it is a solution to the system
\begin{equation}\label{system}
  \left\{ \begin{matrix}
   0={{{\bar{\partial }}}_{A_{0}^{app}}}{{\psi }_{1}}+\left[ {{\psi }_{2}},{{\left( \Phi _{0}^{app} \right)}^{*}} \right]  \\
   0={{\partial }_{A_{0}^{app}}}{{\psi }_{2}}+\left[ {{\psi }_{1}},\Phi _{0}^{app} \right].  \\
\end{matrix} \right.
\end{equation}
Differentiate the first equation and use that ${{\partial }_{A_{0}^{app}}}{{\left( \Phi _{0}^{app} \right)}^{*}}=0$ to imply that
\begin{align*}
0 & = {{\partial }_{A_{0}^{app}}}{{\bar{\partial }}_{A_{0}^{app}}}{{\psi }_{1}}-\left[ {{\partial }_{A_{0}^{app}}}{{\psi }_{2}},{{\left( \Phi _{0}^{app} \right)}^{*}} \right]\\
& = {{\partial }_{A_{0}^{app}}}{{\bar{\partial }}_{A_{0}^{app}}}{{\psi }_{1}}+\left[ \left[ {{\psi }_{1}},\Phi _{0}^{app} \right],{{\left( \Phi _{0}^{app} \right)}^{*}} \right].
\end{align*}
From this it follows that
\begin{align*}
\partial \left\langle {{{\bar{\partial }}}_{A_{0}^{app}}}{{\psi }_{1}},{{\psi }_{1}} \right\rangle & = \left\langle {{\partial }_{A_{0}^{app}}}{{{\bar{\partial }}}_{A_{0}^{app}}}{{\psi }_{1}},{{\psi }_{1}} \right\rangle -\left\langle {{{\bar{\partial }}}_{A_{0}^{app}}}{{\psi }_{1}},{{{\bar{\partial }}}_{A_{0}^{app}}}{{\psi }_{1}} \right\rangle \\
& = -{{\left| \left[ {{\psi }_{1}},\Phi _{0}^{app} \right] \right|}^{2}}-{{\left| {{{\bar{\partial }}}_{A_{0}^{app}}}{{\psi }_{1}} \right|}^{2}}
\end{align*}
and similarly 
\[\bar{\partial }\left\langle {{\partial }_{A_{0}^{app}}}{{\psi }_{1}},{{\psi }_{1}} \right\rangle =-{{\left| \left[ {{\psi }_{1}},{{\left( \Phi _{0}^{app} \right)}^{*}} \right] \right|}^{2}}-{{\left| {{\partial }_{A_{0}^{app}}}{{\psi }_{1}} \right|}^{2}}.\]
Now let ${{X}_{S}}:=X_{\#}^{\times }\backslash \bigcup\limits_{p\in \mathfrak{p}}{{{\text{C}}_{p}}}\left( S \right)$, where for $S>0$ we denote by ${{\text{C}}_{p}}\left( S \right)$ the subcylinders of points $\left( \tau ,\vartheta  \right)\in {{\text{C}}_{p}}\left( 0 \right)$ with $\tau \ge S$. From Stokes' theorem one has
\[\int\limits_{{{X}_{S}}}{\partial \left\langle {{{\bar{\partial }}}_{A_{0}^{app}}}{{\psi }_{1}},{{\psi }_{1}} \right\rangle }+\bar{\partial }\left\langle {{\partial }_{A_{0}^{app}}}{{\psi }_{1}},{{\psi }_{1}} \right\rangle =\int\limits_{\partial {{X}_{S}}}{\left\langle {{d}_{A_{0}^{app}}}{{\psi }_{1}},{{\psi }_{1}} \right\rangle }.\]
Letting $S\to \infty $, ${{\psi }_{1}}\left| _{\tau =S} \right.$ ${{L}^{2}}$-converges to its asymptotic trace ${{\partial }_{\infty }}{{\psi }_{1}}\in {{\Omega }^{0}}\left( {{S}^{1}},\mathfrak{sp}\left( 4,\mathbb{C} \right) \right)$, which by Proposition \ref{asympt_trace} is of the form
	\[{{\psi }_{1}}\left( \infty  \right)=\left( \begin{matrix}
   {{a}_{1}} & 0 & 0 & 0  \\
   0 & {{d}_{1}} & 0 & 0  \\
   0 & 0 & -{{a}_{1}} & 0  \\
   0 & 0 & 0 & -{{d}_{1}}  \\
\end{matrix} \right),\]
for ${{a}_{1}},{{d}_{1}}\in \mathbb{C}$. Therefore, ${{d}_{A_{0}^{app}}}\left( {{\partial }_{\infty }}{{\psi }_{1}}\left( \infty  \right) \right)=0$ and so
	\[\int\limits_{X_{\#}^{\times }}{\partial \left\langle {{{\bar{\partial }}}_{A_{0}^{app}}}{{\psi }_{1}},{{\psi }_{1}} \right\rangle }+\bar{\partial }\left\langle {{\partial }_{A_{0}^{app}}}{{\psi }_{1}},{{\psi }_{1}} \right\rangle =\underset{S\to \infty }{\mathop{\lim }}\,\int\limits_{\partial {{X}_{S}}}{\left\langle {{d}_{A_{0}^{app}}}{{\psi }_{1}},{{\psi }_{1}} \right\rangle }=0.\]
This implies that
${{\bar{\partial }}_{A_{0}^{app}}}{{\psi }_{1}}={{\partial}_{A_{0}^{app}}}{{\psi }_{1}}=\left[ {{\psi }_{1}},\Phi _{0}^{app} \right]=\left[ {{\psi }_{1}},{{\left( \Phi _{0}^{app} \right)}^{*}} \right]=0$.\\
We may as well derive that ${{\bar{\partial }}_{A_{0}^{app}}}{{\psi }_{2}}={{\partial}_{A_{0}^{app}}}{{\psi }_{2}}=\left[ {{\psi }_{2}},\Phi _{0}^{app} \right]=\left[ {{\psi }_{2}},{{\left( \Phi _{0}^{app} \right)}^{*}} \right]=0$ by taking the hermitian adjoint of Equation (\ref{system}) and repeating the same arguments for the solution $\left( A_{0}^{app},-\Phi _{0}^{app} \right)$.
\end{proof}

\begin{proposition}
The operator ${{L}_{1}}+L_{2}^{*}$ considered as a densely defined operator on $L_{\text{ext}}^{2}\left( X_{\#}^{\times } \right)$ has trivial kernel.
\end{proposition}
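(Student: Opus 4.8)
The plan is to take any $\left(\psi_1,\psi_2\right)\in\ker\left(L_1+L_2^{*}\right)\cap L_{\text{ext}}^{2}\left(X_{\#}^{\times}\right)$ and show it vanishes identically, by exploiting the \emph{irreducibility} of the representation carried by the left-hand piece. First I would invoke the preceding Lemma, which asserts that each $\psi_i$ satisfies $d_{A_0^{app}}\psi_i=\left[\psi_i,\Phi_0^{app}\right]=\left[\psi_i,\left(\Phi_0^{app}\right)^{*}\right]=0$. Since in the $\text{Sp}\left(4,\mathbb{R}\right)$-setting $-\tau\left(\Phi\right)=\Phi^{*}$, these three relations combine to $D\psi_i=0$ for the flat connection $D=d_{A_0^{app}}+\Phi_0^{app}+\left(\Phi_0^{app}\right)^{*}$ attached to the exact solution on the noded surface. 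Thus $\psi_i$ is parallel, and on each component of $X_{\#}^{\times}$ it corresponds to a vector of $\mathfrak{sp}\left(4,\mathbb{C}\right)$ fixed by the adjoint action of the holonomy of $D$. (The preceding Proposition already forces $\partial_\infty\psi_i$ into diagonal form, i.e. into the commutant of $\varphi^{\bmod}$; the argument below will strengthen this to $\partial_\infty\psi_i=0$.)

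Next I would restrict to the left surface $X_1^{\times}$, where $\left(A_l,\Phi_l\right)$ is the extension through $\phi_{irr}$ of a stable parabolic $\text{SL}\left(2,\mathbb{R}\right)$-Higgs bundle. The holonomy of $D$ there is $\mathrm{Ad}$ composed with a Fuchsian $\text{SL}\left(2,\mathbb{R}\right)$-representation extended via $\phi_{irr}$. Because a discrete faithful representation of the (free) fundamental group of a surface with boundary is non-elementary, its image is Zariski dense in $\text{SL}\left(2,\mathbb{R}\right)$, so the Zariski closure of the holonomy is the principal (irreducible) $\text{SL}\left(2,\mathbb{C}\right)\hookrightarrow\text{Sp}\left(4,\mathbb{C}\right)$ cut out by $\phi_{irr}$. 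Under this principal $\mathfrak{sl}\left(2,\mathbb{C}\right)$ the adjoint module decomposes, according to the exponents $1,3$ of $C_2$, as $V_2\oplus V_6$ of dimensions $3$ and $7$, neither summand trivial; hence $\mathfrak{sp}\left(4,\mathbb{C}\right)$ carries no nonzero invariant vector and the centralizer of the image is trivial. Consequently the only holonomy-fixed vector is $0$, so $\psi_i\big|_{X_1^{\times}}=0$, and in particular $\partial_\infty\psi_i=0$ along the neck.

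Finally I would transport this vanishing to the right surface $X_2^{\times}$. There the holonomy arises from the \emph{diagonal} embedding $\psi$ and is reducible, so nonzero parallel sections are a priori possible; this is exactly why one cannot argue symmetrically. However, $\psi_i$ now has vanishing asymptotic trace, so $\psi_i\big|_{X_2^{\times}}\in L^{2}\left(X_2^{\times}\right)$; being parallel for $D$, its pointwise norm $\left|\psi_i\right|$ is constant, and a nonzero constant is not $L^{2}$ over the infinite-volume cylindrical end. Hence $\psi_i\big|_{X_2^{\times}}=0$ too, whence $\left(\psi_1,\psi_2\right)=0$ and the kernel is trivial. The purely analytic ingredients—that the bracket relations yield $D\psi_i=0$ and that a vanishing asymptotic trace forces $\psi_i\in L^{2}$—follow the template of Swoboda, Lemma~3.11. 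The decisive new point, and the main obstacle, is precisely the asymmetry of the construction: because the diagonal side admits nonzero invariants, one must first annihilate the section on the irreducible side and only then propagate the vanishing across the neck, which is where the choice of $\phi_{irr}$, and thus the Zariski density characterizing the exceptional components, enters in an essential way.
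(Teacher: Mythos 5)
Your first step (invoking Lemma 6.2) matches the paper's. Your left-surface argument reaches the correct conclusion by a genuinely different route: the paper never mentions holonomy or Zariski density, but argues pointwise that a nonzero kernel element --- being $d_{A_{0}^{app}}$-parallel, of constant norm, and commuting with $\Phi_{0}^{app}$ and $\left(\Phi_{0}^{app}\right)^{*}$ --- would force $\varphi(x)$ to be unitary at every point, and this is contradicted at a zero $x_{0}$ of the quadratic differential (such zeros exist by the genericity assumption of \S 4.1), where $\varphi(x_{0})$ is $\phi_{irr*}$ of a nonzero nilpotent and hence not unitary. Your decomposition $\mathfrak{sp}\left(4,\mathbb{C}\right)\cong V_{2}\oplus V_{6}$ under the principal $\mathfrak{sl}\left(2,\mathbb{C}\right)$ is correct; note, however, that Proposition 6.3 only assumes the left datum is a \emph{stable} parabolic $\text{SL}\left(2,\mathbb{R}\right)$-Higgs bundle, and stability yields irreducibility of the associated flat connection, not discreteness or faithfulness, so the appeal to Fuchsian representations is an over-assumption (repairable: the only other possible Zariski closure of an irreducible infinite image, the normalizer of a torus, also acts on $V_{2}\oplus V_{6}$ without invariants).

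The genuine gap is your transport step. Proposition 6.3 exists to verify the hypothesis of the Cappell--Lee--Miller/Nicolaescu theorem (Theorem 5.9, used in Proposition 6.5), and that hypothesis is triviality of the extended-$L^{2}$ kernel on each cylindrical-end surface \emph{separately}: in this framework $X_{\#}^{\times}$ is the disjoint union of $X_{1}^{\times}$ and $X_{2}^{\times}$, a kernel element may vanish identically on $X_{1}^{\times}$ while being nonzero on $X_{2}^{\times}$, and nothing identifies the asymptotic traces along the ends of $X_{2}^{\times}$ with those along the ends of $X_{1}^{\times}$. Hence $\psi_{i}|_{X_{1}^{\times}}=0$ does not give $\partial_{\infty}\bigl(\psi_{i}|_{X_{2}^{\times}}\bigr)=0$, and kernel elements supported on the right piece are never touched by your argument. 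Compounding this, the premise motivating your detour is false in the paper's configuration: the right-hand datum is $\psi$ applied to two \emph{non-isomorphic} irreducible $\text{SL}\left(2\right)$-data (their models are $\mathrm{diag}\left(-3C,3C\right)\frac{dz}{z}$ and $\mathrm{diag}\left(-C,C\right)\frac{dz}{z}$), so in the decomposition $\mathfrak{sp}\left(4,\mathbb{C}\right)\cong\mathfrak{sp}\left(W_{1}\right)\oplus\mathfrak{sp}\left(W_{2}\right)\oplus\left(W_{1}\otimes W_{2}\right)$ Schur's lemma leaves no invariant vectors, and the right piece can (and must) be handled by the same direct holonomy argument as the left; no propagation is needed, nor is any available. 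This is not a cosmetic point: if the right side really were $\phi_{\Delta}$ of a single irreducible $\text{SL}\left(2\right)$-datum, as in your scenario, then the generator of the one-dimensional centralizer of $\phi_{\Delta}\left(\text{SL}\left(2,\mathbb{C}\right)\right)$ in $\mathfrak{sp}\left(4,\mathbb{C}\right)$ (the invariant line $\Lambda^{2}\subset W_{1}\otimes W_{2}$) would define a constant-norm section commuting with $A_{r}$, $\Phi_{r}$, $\Phi_{r}^{*}$, i.e.\ a nonzero bounded extended-$L^{2}$ kernel element supported entirely on the right piece; Proposition 6.3 would then simply be false, and no argument transporting information from the left could repair it. The paper's mechanism, by contrast, applies verbatim on either piece, since each side's quadratic differentials have zeros where the Higgs field fails to be unitary; that is precisely why it requires no transport across the node.
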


\begin{proof}
Let $\left( {{\psi }_{1}},{{\psi }_{2}} \right)\in \ker \left( {{L}_{1}}+L_{2}^{*} \right)\cap L_{\text{ext}}^{2}\left( X_{\#}^{\times } \right)$. From Lemma \ref{dpsi} we have
 \[{{d}_{A_{0}^{app}}}{{\psi }_{i}}=\left[ {{\psi }_{i}},\Phi _{0}^{app} \right]=\left[ {{\psi }_{i}},{{\left( \Phi _{0}^{app} \right)}^{*}} \right]=0,\]
 for $i=1,2$. We show that ${{\psi }_{1}}=0$ by showing that $\gamma :={{\psi }_{1}}+\psi _{1}^{*}\in {{\Omega }^{0}}\left( X_{\#}^{\times },\mathfrak{u}\left( 2 \right) \right)$ and $\delta :=i\left( {{\psi }_{1}}-\psi _{1}^{*} \right)\in {{\Omega }^{0}}\left( X_{\#}^{\times },\mathfrak{u}\left( 2 \right) \right)$ both vanish. Choosing a holomorphic coordinate $z$ centered at the node of $X_{\#}^{\times }$, the Higgs field $\Phi _{0}^{app}$ in our exact solution is written
	\[\Phi _{0}^{app}=\varphi \frac{dz}{z}\]
with $\varphi \in {{\mathfrak{m}}^{\mathbb{C}}}\left( \text{Sp(4}\text{,}\mathbb{R}\text{)} \right)=\left\{ \left( \begin{matrix}
   A & B  \\
   B & -A  \\
\end{matrix} \right)\left| A,B\in {{\mathsf{\mathcal{M}}}_{2}}\left( \mathbb{C} \right)\text{ with }{{A}^{T}}=A,\,\,{{B}^{T}}=B \right. \right\}$. We get that $d{{\left| \gamma  \right|}^{2}}=2\left\langle {{d}_{A_{0}^{app}}}\gamma ,\gamma  \right\rangle =0$, in other words, $\left| \gamma  \right|$ is constant on $X_{\#}^{\times }$, as well as that $\gamma \left( x \right)$ lies in the kernel of the linearization operator.\\
Now, this $\gamma \left( x \right) \in \mathfrak{u}\left( 2 \right)$ is hermitian. It has orthogonal eigenvectors for distinct eigenvalues, but even if there are degenerate eigenvalues, it is still possible to find an orthonormal basis of ${{\mathbb{C}}^{4}}$ consisting of four eigenvectors of $\gamma \left( x \right)$, thus ${{\mathbb{C}}^{4}}={{E}_{{{\lambda }_{1}}}}\oplus \ldots \oplus {{E}_{{{\lambda }_{4}}}}$, where ${{\lambda }_{i}}$ the eigenvalues of $\gamma \left( x \right)$. Assuming that $\gamma \left( x \right)$ is non-zero, since $\left[ \varphi \left( x \right),\gamma \left( x \right) \right]=0$ it follows that $\varphi \left( x \right)$ preserves the eigenspaces of $\gamma \left( x \right)$ for all $x\in X_{\#}^{\times }$ and so $\left\langle \varphi \left( x \right)v,\varphi \left( x \right)w \right\rangle =\left\langle v,w \right\rangle $ for $v,w\in {{\mathbb{C}}^{4}}$. In other words, $\varphi \left( x \right)$ ought to be an isometry with respect to the usual norm in ${{\mathbb{C}}^{4}}$. Equivalently, $\varphi \left( x \right)$ is unitary for all $x\in X_{\#}^{\times }$. However, for a zero ${{x}_{0}}$ of $\det \Phi =\det \tilde{\varphi }\left( {{x}_{0}} \right)\frac{d{{z}^{2}}}{{{z}^{2}}}$ chosen on the left hand side surface ${{X}_{l}}$ of $X_{\#}^{\times }$ we see that
	\[\varphi \left( {{x}_{0}} \right)={{\phi }_{irr*}}\left( \begin{matrix}
   0 & 1  \\
   z & 0  \\
\end{matrix} \right)=\left( \begin{matrix}
   0 & -\sqrt{3} & 0 & 0  \\
   -\sqrt{3}z & 0 & 0 & 2  \\
   0 & 0 & 0 & \sqrt{3}z  \\
   0 & 2z & \sqrt{3} & 0  \\
\end{matrix} \right),\]
which is not unitary. Therefore, $\gamma=0$.\\
That $\delta$ vanishes, as well as ${{\psi }_{2}}=0$, is proven similarly.
\end{proof}

\begin{remark}
The method described in this subsection for showing that the linearization operator in the case $G = \text{Sp(4}\text{,}\mathbb{R}\text{)}$ is invertible can be adapted to study this problem for other split real Lie groups accordingly.
\end{remark}

\subsection{Upper bound for ${{L}_{\left( A_{R}^{app},\Phi _{R}^{app} \right)}}$ in ${{H}^{2}}\left( X_{\#}^{\times } \right)$}

Define the operator
\[{{\mathsf{\mathcal{D}}}_{T}}:={{L}_{1,T}}+L_{2,T}^{*}.\]
The following proposition is an immediate consequence of the Cappell-Lee-Miller theorem (Theorem \ref{CLM}) for this operator ${{\mathsf{\mathcal{D}}}_{T}}$ using the fact that the kernel of the limiting operator ${{L}_{1}}+L_{2}^{*}$ is trivial on $L_{\text{ext}}^{2}\left( X_{\#}^{\times } \right)$:

\begin{proposition} There exist constants ${{T}_{0}}>0$ and $C>0$ such that the operator $\mathsf{\mathcal{D}}_{T}^{*}{{\mathsf{\mathcal{D}}}_{T}}$ is bijective for all $T>{{T}_{0}}$ and its inverse ${{\left( \mathsf{\mathcal{D}}_{T}^{*}{{\mathsf{\mathcal{D}}}_{T}} \right)}^{-1}}:{{L}^{2}}\left( {{X}_{\#}} \right)\to {{L}^{2}}\left( {{X}_{\#}} \right)$ satisfies
	\[{{\left\| {{\left( \mathsf{\mathcal{D}}_{T}^{*}{{\mathsf{\mathcal{D}}}_{T}} \right)}^{-1}} \right\|}_{\mathsf{\mathcal{L}}\left( {{L}^{2}},{{L}^{2}} \right)}}\le C{{T}^{2}}.\]
\end{proposition}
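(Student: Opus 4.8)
The plan is to deduce the statement directly from the Cappell-Lee-Miller gluing theorem (Theorem 5.9), so the real work is checking that its hypotheses are met by the family $\mathsf{\mathcal{D}}_T = L_{1,T} + L_{2,T}^*$. First I would record the geometric dictionary set up in \S6.1: as $R \searrow 0$ the neck of $X_\#$ elongates indefinitely and $X_\#$ degenerates to the noded surface $X_\#^\times$, which is precisely the union of the two cylindrical-end manifolds $\hat{N}_1 = X_l$ and $\hat{N}_2 = X_r$ with $T = |\log R| \to \infty$. Under this dictionary the glued manifold $N_T$ of the theorem is $X_\#$ carrying a neck of length $\sim 2|\log R|$, the $\mathbb{Z}_2$-graded Dirac-type operator $\mathfrak{D}_T$ is the one built from the linearized complex, and its off-diagonal block is exactly $\mathsf{\mathcal{D}}_T = L_{1,T} + L_{2,T}^*$.

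Next I would verify the two structural assumptions preceding Theorem 5.9. By \S6.2 the limiting operator $\mathfrak{D}_\infty$ is an exponentially small perturbation of the cylindrical operator $\hat{\mathfrak{D}}_\infty$, so along each end $\mathsf{\mathcal{D}}_{i,\infty} = \mathsf{\mathcal{D}}_i + B_i$ with $\mathsf{\mathcal{D}}_i = G_i(\partial_\tau - D_i)$ cylindrical and $B_i$ an order-zero operator decaying like $e^{-\lambda|\tau|}$. The matching conditions $G_1 + G_2 = 0$ and $L_1 - L_2 = 0$ hold because, by Assumption 4.5, the two model solutions glued across the neck differ by an overall sign, which is exactly the orientation-reversing identification $dz/z \mapsto -dw/w$ recorded in \S4.7; this also fixes the isometry $\gamma$ of bundles covering the orientation-reversing $\varphi$ and respecting the gradings.

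The decisive hypothesis is the triviality of the extended-$L^2$ kernels $K_i^+ = \ker \mathsf{\mathcal{D}}_{i,\infty} \cap L^2_{\text{ext}}(\hat{N}_i, \hat{E}_i)$. Here I would invoke Proposition 6.3, which gives $\ker(L_1 + L_2^*) \cap L^2_{\text{ext}}(X_\#^\times) = 0$. Since the noded surface is the union of the two cylindrical-end pieces joined only at the node, a section in the extended-$L^2$ kernel over $X_\#^\times$ restricts on each piece to an element of $K_i^+$, and conversely a pair of such restrictions assembles to a kernel element over $X_\#^\times$; hence triviality over $X_\#^\times$ is equivalent to triviality of both $K_1^+$ and $K_2^+$. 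With the two structural assumptions and the kernel hypothesis in hand, Theorem 5.9 applies verbatim and yields constants $T_0 > 0$ and $C > 0$ such that $\mathsf{\mathcal{D}}_T^* \mathsf{\mathcal{D}}_T$ is bijective for all $T > T_0$ and its inverse satisfies $\|(\mathsf{\mathcal{D}}_T^* \mathsf{\mathcal{D}}_T)^{-1}\|_{\mathcal{L}(L^2, L^2)} \le C T^2$.

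The main obstacle is conceptual rather than computational: one must make sure the single kernel statement of Proposition 6.3, phrased over the connected noded surface, correctly feeds the two \emph{separate} piecewise hypotheses of the Cappell-Lee-Miller theorem. The point to check carefully is that the argument of Proposition 6.3 — constancy of $|\gamma|$ along $X_\#^\times$ together with the non-unitarity of $\varphi$ at a zero of $\det\Phi$ chosen on $X_l$ — genuinely forces vanishing on both components; this is legitimate because $X_\#^\times$ is connected through the node and $|\gamma|$ is constant there, so a zero on one component propagates to the other. Everything else is a direct citation of Theorem 5.9.
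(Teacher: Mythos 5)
Your proposal is correct and follows essentially the same route as the paper: the paper's proof consists precisely of invoking the Cappell--Lee--Miller/Nicolaescu gluing theorem (Theorem 5.9) for $\mathsf{\mathcal{D}}_{T}=L_{1,T}+L_{2,T}^{*}$, using the triviality of $\ker\left( L_{1}+L_{2}^{*} \right)\cap L_{\text{ext}}^{2}\left( X_{\#}^{\times } \right)$ established in Proposition 6.3 together with the perturbation structure of \S 6.2. Your additional check that the kernel statement over the noded surface feeds the two piecewise hypotheses $K_{i}^{+}=0$ is a sensible elaboration of what the paper leaves implicit.
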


We are finally in position to imply the existence of the inverse operator ${{G}_{R}}=L_{\left( A_{R}^{app},\Phi _{R}^{app} \right)}^{-1}:{{L}^{2}}\left( {{X}_{\#}} \right)\to {{L}^{2}}\left( {{X}_{\#}} \right)$ and provide an upper bound for its norm, by adapting the analogous proof from \cite{Swoboda} into our case. We first need the following:
\begin{corollary}\label{basic_corollary}
There exist constants ${{T}_{0}}>0$ and $C>0$ such that for all $T>{{T}_{0}}$ and $\gamma \in {{\Omega }^{0}}\left( {{X}_{\#}},{{E}_{H}}\left( \mathfrak{h}^{\mathbb{C}} \right) \right)$ it holds that
	\[{{\left\| L_{1,T}^{*}{{L}_{1,T}}\gamma  \right\|}_{{{L}^{2}}\left( {{X}_{\#}} \right)}}\ge C{{T}^{-2}}{{\left\| \gamma  \right\|}_{{{L}^{2}}\left( {{X}_{\#}} \right)}}.\]
\end{corollary}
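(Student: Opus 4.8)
The plan is to deduce the asserted lower bound from Proposition 6.5 by comparing the operator $L_{1,T}^{*}L_{1,T}$ with $\mathsf{\mathcal{D}}_{T}^{*}\mathsf{\mathcal{D}}_{T}$, where $\mathsf{\mathcal{D}}_{T}=L_{1,T}+L_{2,T}^{*}$. First I would observe that for $\gamma\in\Omega^{0}\left(X_{\#},E_{H}\left(\mathfrak{h}\right)\right)$ the summand $L_{2,T}^{*}$ of $\mathsf{\mathcal{D}}_{T}$ acts on the top-degree forms $\Omega^{2}\left(\mathfrak{h}^{\mathbb{C}}\right)\oplus\Omega^{2}\left(\mathfrak{g}^{\mathbb{C}}\right)$ and therefore annihilates the degree-zero input, so that $\mathsf{\mathcal{D}}_{T}\gamma=L_{1,T}\gamma$. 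Applying $\mathsf{\mathcal{D}}_{T}^{*}=L_{1,T}^{*}+L_{2,T}$ then gives the splitting
\[
\mathsf{\mathcal{D}}_{T}^{*}\mathsf{\mathcal{D}}_{T}\gamma=L_{1,T}^{*}L_{1,T}\gamma+L_{2,T}L_{1,T}\gamma,
\]
in which the first term lies in $\Omega^{0}\left(\mathfrak{h}^{\mathbb{C}}\right)$ and the second in $\Omega^{2}\left(\mathfrak{h}^{\mathbb{C}}\right)\oplus\Omega^{2}\left(\mathfrak{g}^{\mathbb{C}}\right)$. Since these are $L^{2}$-orthogonal summands, one gets the Pythagorean identity $\left\|\mathsf{\mathcal{D}}_{T}^{*}\mathsf{\mathcal{D}}_{T}\gamma\right\|_{L^{2}}^{2}=\left\|L_{1,T}^{*}L_{1,T}\gamma\right\|_{L^{2}}^{2}+\left\|L_{2,T}L_{1,T}\gamma\right\|_{L^{2}}^{2}$.

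Next I would feed in the two quantitative inputs. On one hand, Proposition 6.5 furnishes $T_{0}>0$ and $C>0$ with $\left\|\left(\mathsf{\mathcal{D}}_{T}^{*}\mathsf{\mathcal{D}}_{T}\right)^{-1}\right\|_{\mathsf{\mathcal{L}}\left(L^{2},L^{2}\right)}\le CT^{2}$ for $T>T_{0}$; writing $\gamma=\left(\mathsf{\mathcal{D}}_{T}^{*}\mathsf{\mathcal{D}}_{T}\right)^{-1}\mathsf{\mathcal{D}}_{T}^{*}\mathsf{\mathcal{D}}_{T}\gamma$ this yields the lower bound $\left\|\mathsf{\mathcal{D}}_{T}^{*}\mathsf{\mathcal{D}}_{T}\gamma\right\|_{L^{2}}\ge C^{-1}T^{-2}\left\|\gamma\right\|_{L^{2}}$. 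On the other hand, I would control the cross term using that the deformation complex fails to be exact only by the Hitchin error: as recorded in \S 6.1,
\[
L_{2,T}L_{1,T}\gamma=\left[F_{A_{R}^{app}},\gamma\right]+\left[\left[\Phi_{R}^{app},-\tau\left(\Phi_{R}^{app}\right)\right],\gamma\right]=\left[F_{A_{R}^{app}}+\left[\Phi_{R}^{app},-\tau\left(\Phi_{R}^{app}\right)\right],\gamma\right],
\]
that is, a pointwise commutator with the $2$-form measuring the failure of the first Hitchin equation. By Lemma 4.7 the $C^{0}$-norm of this $2$-form is at most $CR^{\delta''}=Ce^{-\delta''T}$ (the Hodge star is a pointwise isometry, so passing to $*F_{A_{R}^{app}}+*[\Phi_{R}^{app},-\tau(\Phi_{R}^{app})]$ costs nothing), and since $\mathrm{ad}$ is bounded this gives $\left\|L_{2,T}L_{1,T}\gamma\right\|_{L^{2}}\le C'e^{-\delta''T}\left\|\gamma\right\|_{L^{2}}$, an \emph{exponentially} small contribution with $R$-independent constants.

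Finally I would combine the two estimates. From the Pythagorean identity,
\[
\left\|L_{1,T}^{*}L_{1,T}\gamma\right\|_{L^{2}}^{2}=\left\|\mathsf{\mathcal{D}}_{T}^{*}\mathsf{\mathcal{D}}_{T}\gamma\right\|_{L^{2}}^{2}-\left\|L_{2,T}L_{1,T}\gamma\right\|_{L^{2}}^{2}\ge\left(C^{-1}T^{-2}\right)^{2}\left\|\gamma\right\|_{L^{2}}^{2}-\left(C'e^{-\delta''T}\right)^{2}\left\|\gamma\right\|_{L^{2}}^{2}.
\]
Because the polynomial decay $T^{-2}$ dominates the exponential decay $e^{-\delta''T}$ as $T\to\infty$, enlarging $T_{0}$ if necessary makes the subtracted term at most half of the leading one, whence $\left\|L_{1,T}^{*}L_{1,T}\gamma\right\|_{L^{2}}\ge CT^{-2}\left\|\gamma\right\|_{L^{2}}$ for all $T>T_{0}$, which is the claim. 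The step I expect to be the crux is the orthogonal splitting together with the identification of the cross term $L_{2,T}L_{1,T}$ with the commutator against the Hitchin-error $2$-form: this is precisely where the non-exactness of the complex enters, and it is the uniform exponential smallness of the error in Lemma 4.7 that prevents this term from degrading the $T^{-2}$ rate inherited from the Cappell--Lee--Miller bound.
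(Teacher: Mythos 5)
Your proof is correct and takes essentially the same approach as the paper's: both invoke Proposition 6.5 to get the $T^{-2}$ lower bound for $\mathsf{\mathcal{D}}_{T}^{*}{{\mathsf{\mathcal{D}}}_{T}}$, identify the cross term ${{L}_{2,T}}{{L}_{1,T}}\gamma$ with the commutator against the Hitchin-error $2$-form and bound it by $C{{e}^{-{\delta }''T}}{{\left\| \gamma  \right\|}_{{{L}^{2}}}}$ via Lemma 4.7, and conclude because polynomial decay dominates exponential decay for large $T$. The only (cosmetic) difference is that you exploit the $L^{2}$-orthogonality of the degree-zero and degree-two components to obtain a Pythagorean identity, whereas the paper simply writes $L_{1,T}^{*}{{L}_{1,T}}\gamma =\mathsf{\mathcal{D}}_{T}^{*}{{\mathsf{\mathcal{D}}}_{T}}\gamma -{{L}_{2,T}}{{L}_{1,T}}\gamma$ and applies the triangle inequality; both yield the same estimate up to the value of the constant.
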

\begin{proof}
The previous proposition provides the existence of constants ${{T}_{0}}>0$ and $C>0$ such that for all $T>{{T}_{0}}$ and $\gamma \in {{\Omega }^{0}}\left( {{X}_{\#}},{{E}_{H}}\left( \mathfrak{h}^{\mathbb{C}} \right) \right)$:
\[{{\left\| {{\left( \mathsf{\mathcal{D}}_{T}^{*}{{\mathsf{\mathcal{D}}}_{T}} \right)}^{-1}}\gamma  \right\|}_{{{L}^{2}}\left( {{X}_{\#}} \right)}}\le C{{T}^{2}}{{\left\| \gamma  \right\|}_{{{L}^{2}}\left( {{X}_{\#}} \right)}}\]
and thus \[{{\left\| \mathsf{\mathcal{D}}_{T}^{*}{{\mathsf{\mathcal{D}}}_{T}}\gamma  \right\|}_{{{L}^{2}}\left( {{X}_{\#}} \right)}}\ge C{{T}^{-2}}{{\left\| \gamma  \right\|}_{{{L}^{2}}\left( {{X}_{\#}} \right)}}.\]
According to the definition of ${{\mathsf{\mathcal{D}}}_{T}}$ we have
\begin{align*}
\mathsf{\mathcal{D}}_{T}^{*}{{\mathsf{\mathcal{D}}}_{T}} & = {{\left( {{L}_{1,T}}+L_{2,T}^{*} \right)}^{*}}\left( {{L}_{1,T}}+L_{2,T}^{*} \right)\\
& = L_{1,T}^{*}{{L}_{1,T}}+{{L}_{2,T}}{{L}_{1,T}}+L_{1,T}^{*}L_{2,T}^{*}+{{L}_{2,T}}L_{2,T}^{*},
\end{align*}
as well as ${{L}_{2,T}}{{L}_{1,T}}\gamma =\left[ {{F}_{A_{R}^{app}}},\gamma  \right]+\left[ \left[ \Phi _{R}^{app},-\tau \left( \Phi _{R}^{app} \right) \right],\gamma  \right]$, for sections $\gamma \in {{\Omega }^{0}}\left( {{X}_{\#}},{{E}_{H}}\left( \mathfrak{h}^{\mathbb{C}} \right) \right)$. For the parameter $T=-\log R$, Lemma \ref{approximate_estimate} provides the estimate
\begin{align*}
{{\left\| {{L}_{2,T}}{{L}_{1,T}}\gamma  \right\|}_{{{L}^{2}}\left( {{X}_{\#}} \right)}} & \le {{C}_{1}}{{R}^{{{\delta }''}}}{{\left\| \gamma  \right\|}_{{{L}^{2}}\left( {{X}_{\#}} \right)}}\\
& = {{C}_{1}}{{e}^{-{\delta }''T}}{{\left\| \gamma  \right\|}_{{{L}^{2}}\left( {{X}_{\#}} \right)}},
\end{align*}
for $T$-independent constants ${{C}_{1}},{\delta }''>0$.\\
Remember that the operator $\mathsf{\mathcal{D}}_{T}^{*}{{\mathsf{\mathcal{D}}}_{T}}$ acts on forms of even total degree. Now, decomposing forms of even total degree into forms of degree zero and degree two, for a 0-form $\gamma$ we may write $\gamma=\gamma+0$ and thus is
\[L_{1,T}^{*}{{L}_{1,T}}\gamma =\mathsf{\mathcal{D}}_{T}^{*}{{\mathsf{\mathcal{D}}}_{T}}\gamma -{{L}_{2,T}}{{L}_{1,T}}\gamma. \]
The triangle inequality now provides that
\begin{align*}
{{\left\| L_{1,T}^{*}{{L}_{1,T}}\gamma  \right\|}_{{{L}^{2}}\left( {{X}_{\#}} \right)}} & \ge {{\left\| \mathsf{\mathcal{D}}_{T}^{*}{{\mathsf{\mathcal{D}}}_{T}}\gamma  \right\|}_{{{L}^{2}}\left( {{X}_{\#}} \right)}}-{{\left\| {{L}_{2,T}}{{L}_{1,T}}\gamma  \right\|}_{{{L}^{2}}\left( {{X}_{\#}} \right)}}\\
& \ge C{{T}^{-2}}{{\left\| \gamma  \right\|}_{{{L}^{2}}\left( {{X}_{\#}} \right)}}-{{C}_{1}}{{e}^{-{\delta }''T}}{{\left\| \gamma  \right\|}_{{{L}^{2}}\left( {{X}_{\#}} \right)}},
\end{align*}
which in turn for sufficiently large $T$ implies the desired inequality.
\end{proof}

\begin{proposition}
There exist constants ${{R}_{0}}>0$ and $C>0$, such that for all sufficiently small $0<R<{{R}_{0}}$ the operator ${{L}_{\left( A_{R}^{app},\Phi _{R}^{app} \right)}}$ is invertible and its inverse ${{G}_{R}}=L_{\left( A_{R}^{app},\Phi _{R}^{app} \right)}^{-1}$ satisfies the estimate
	\[{{\left\| {{G}_{R}}\gamma  \right\|}_{{{L}^{2}}\left( {{X}_{\#}} \right)}}\le C{{\left| \log R \right|}^{2}}{{\left\| \gamma  \right\|}_{{{L}^{2}}\left( {{X}_{\#}} \right)}},\]
for all $\gamma \in {{L}^{2}}\left( {{X}_{\#}} \right)$.
\end{proposition}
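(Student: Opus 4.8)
The plan is to read the statement off from Corollary 6.6, whose quadratic-in-$T$ lower bound already carries all of the hard analysis; what remains is a short functional-analytic argument identifying the relevant operators and upgrading a coercive bound into a bound on the inverse. Throughout I set $T=\left| \log R \right|=-\log R$, so that the small-$R$ regime corresponds exactly to large $T$.

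First I would record the identification of the linearization operator with the composite from the elliptic complex of \S6.1. By Lemma 5.1 (together with the Hodge-star identifications ${{\Omega }^{2}}\cong {{\Omega }^{0}}$ used there), the operator ${{L}_{\left( A_{R}^{app},\Phi _{R}^{app} \right)}}$ acting on ${{\Omega }^{0}}\left( {{X}_{\#}},{{E}_{H}}\left( \mathfrak{h} \right) \right)$ is, up to the fixed normalisation of that lemma, precisely $L_{1,T}^{*}{{L}_{1,T}}$; in particular it is self-adjoint and nonnegative, and since it has the same principal symbol as a Laplacian it is a second-order elliptic operator on the closed surface ${{X}_{\#}}$, hence Fredholm of index zero on the relevant ${{L}^{2}}$-completion. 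Any constant factor introduced by this normalisation is harmless, since it only rescales the final constant.

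Next I would invoke Corollary 6.6, which supplies constants ${{T}_{0}}>0$ and $C>0$ with
\[
{{\left\| L_{1,T}^{*}{{L}_{1,T}}\gamma  \right\|}_{{{L}^{2}}\left( {{X}_{\#}} \right)}}\ge C{{T}^{-2}}{{\left\| \gamma  \right\|}_{{{L}^{2}}\left( {{X}_{\#}} \right)}}
\]
for all $T>{{T}_{0}}$ and $\gamma \in {{\Omega }^{0}}\left( {{X}_{\#}},{{E}_{H}}\left( \mathfrak{h} \right) \right)$. This coercive estimate forces the kernel to be trivial and the range to be closed; combined with self-adjointness, so that the cokernel is isomorphic to the kernel, it yields surjectivity and hence bijectivity of ${{L}_{\left( A_{R}^{app},\Phi _{R}^{app} \right)}}$. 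Setting ${{R}_{0}}:={{e}^{-{{T}_{0}}}}$ identifies the range $0<R<{{R}_{0}}$ with $T>{{T}_{0}}$, so the inverse ${{G}_{R}}$ exists for all such $R$. Finally, substituting $\gamma ={{G}_{R}}\eta$ into the displayed inequality gives ${{\left\| \eta  \right\|}_{{{L}^{2}}}}\ge C{{T}^{-2}}{{\left\| {{G}_{R}}\eta  \right\|}_{{{L}^{2}}}}$, that is,
\[
{{\left\| {{G}_{R}}\eta  \right\|}_{{{L}^{2}}\left( {{X}_{\#}} \right)}}\le {{C}^{-1}}{{T}^{2}}{{\left\| \eta  \right\|}_{{{L}^{2}}\left( {{X}_{\#}} \right)}}={{C}^{-1}}{{\left| \log R \right|}^{2}}{{\left\| \eta  \right\|}_{{{L}^{2}}\left( {{X}_{\#}} \right)}},
\]
which is the asserted estimate after relabelling the constant.

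The only point demanding genuine care is the identification ${{L}_{\left( A_{R}^{app},\Phi _{R}^{app} \right)}}=L_{1,T}^{*}{{L}_{1,T}}$ and the logical step from a one-sided coercive bound to invertibility, which relies on self-adjointness and ellipticity so that trivial kernel plus closed range forces surjectivity. All of the substantive analysis — the ${{T}^{2}}$ growth traced back to the Cappell--Lee--Miller gluing theorem (Theorem 5.9) and the triviality of the limiting kernel established in Proposition 6.3, together with the exponentially small control on ${{L}_{2,T}}{{L}_{1,T}}$ furnished by Lemma 4.7 — is already in place, so the present proposition is essentially a formal corollary of Corollary 6.6.
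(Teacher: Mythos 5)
Your proposal hinges on the claim that $L_{\left( A_R^{app},\Phi_R^{app}\right)}$ is ``up to the fixed normalisation'' precisely $L_{1,T}^{*}L_{1,T}$, and this identification is false; it is exactly where the genuine gap lies. By Lemma 5.1,
\[
\left\langle L_{\left( A_R^{app},\Phi_R^{app}\right)}\gamma ,\gamma \right\rangle_{L^2}
=\left\| d_{A_R^{app}}\gamma \right\|_{L^2}^{2}+2\left\| \left[ \Phi_R^{app},\gamma \right] \right\|_{L^2}^{2},
\]
whereas, since $L_{1,T}\gamma =\left( d_{A_R^{app}}\gamma ,\left[ \Phi_R^{app},\gamma \right] \right)$,
\[
\left\langle L_{1,T}^{*}L_{1,T}\gamma ,\gamma \right\rangle_{L^2}
=\left\| L_{1,T}\gamma \right\|_{L^2}^{2}
=\left\| d_{A_R^{app}}\gamma \right\|_{L^2}^{2}+\left\| \left[ \Phi_R^{app},\gamma \right] \right\|_{L^2}^{2}.
\]
The two quadratic forms differ by $\left\| \left[ \Phi_R^{app},\gamma \right] \right\|_{L^2}^{2}$, i.e.\ by the nonnegative operator $\Lambda_{\Phi}^{*}\Lambda_{\Phi}$ with $\Lambda_{\Phi}\gamma =\left[ \Phi_R^{app},\gamma \right]$. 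The factor $2$ in Lemma 5.1 multiplies only one of the two terms, so this discrepancy is not an overall normalisation: the difference operator is neither zero nor proportional to $L_{1,T}^{*}L_{1,T}$ (it annihilates sections commuting with the Higgs field but not, say, covariantly constant ones). Consequently Corollary 6.6, which is a statement about $L_{1,T}^{*}L_{1,T}$, cannot be applied verbatim to the linearization operator as you do, and your proof as written does not close.

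The gap is fixable, and the fix is what the paper actually does: replace the false equality by the operator inequality $L_{\left( A_R^{app},\Phi_R^{app}\right)}\ge L_{1,T}^{*}L_{1,T}\ge 0$ in the sense of quadratic forms, which is immediate from the two displays above. Since $L_{1,T}^{*}L_{1,T}$ is self-adjoint and nonnegative, the bound $\left\| L_{1,T}^{*}L_{1,T}\gamma \right\|_{L^2}\ge CT^{-2}\left\| \gamma \right\|_{L^2}$ of Corollary 6.6 places its spectrum in $\left[ CT^{-2},\infty \right)$, hence $\left\langle L_{1,T}^{*}L_{1,T}\gamma ,\gamma \right\rangle \ge CT^{-2}\left\| \gamma \right\|^{2}$; the form inequality then transfers this lower bound to $L_{\left( A_R^{app},\Phi_R^{app}\right)}$, so its smallest eigenvalue is at least $CT^{-2}$. (Note that the transfer must go through quadratic forms or smallest eigenvalues: a pointwise inequality $\left\| A\gamma \right\|\ge \left\| B\gamma \right\|$ does \emph{not} follow from $A\ge B\ge 0$.) From that point on, your remaining steps are sound and essentially the paper's: coercivity plus self-adjointness (or strict positivity) gives bijectivity, and $\left\| G_R \right\|\le$ the reciprocal of the smallest eigenvalue yields $\left\| G_R\gamma \right\|_{L^2}\le C^{-1}\left| \log R \right|^{2}\left\| \gamma \right\|_{L^2}$. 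You should also include the paper's preliminary reduction (conjugation by $\gamma \mapsto i\gamma$), which is needed so that Lemma 5.1, stated for $\mathfrak{h}$-valued sections, applies to the operator you are estimating.
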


\begin{proof} It suffices to show the statement for the unitarily equivalent operator (which we shall still denote by ${{L}_{\left( A_{R}^{app},\Phi _{R}^{app} \right)}}$) acting on the space ${{\Omega }^{0}}\left( {{X}_{\#}},{{E}_{H}}\left( \mathfrak{h}^{\mathbb{C}} \right) \right)$ defined after conjugation by the map $\gamma \mapsto i\gamma $. From Lemma \ref{linearization} it follows for all $\gamma \in {{\Omega }^{0}}\left( {{X}_{\#}},{{E}_{H}}\left( \mathfrak{h}^{\mathbb{C}} \right) \right)$ that
\[\left\langle \left( {{L}_{\left( A_{R}^{app},\Phi _{R}^{app} \right)}}-L_{1,T}^{*}{{L}_{1,T}} \right)\gamma ,\gamma  \right\rangle ={{\left\| \left[ \Phi _{R}^{app},\gamma  \right] \right\|}^{2}}\ge 0.\]
Consequently, ${{L}_{\left( A_{R}^{app},\Phi _{R}^{app} \right)}}-L_{1,T}^{*}{{L}_{1,T}}$ is a nonnegative operator. Furthermore, from Corollary \ref{basic_corollary} we obtain
\[{{\left\| {{L}_{\left( A_{R}^{app},\Phi _{R}^{app} \right)}}\gamma  \right\|}_{{{L}^{2}}\left( {{X}_{\#}} \right)}}\ge {{\left\| L_{1,T}^{*}{{L}_{1,T}}\gamma  \right\|}_{{{L}^{2}}\left( {{X}_{\#}} \right)}}\ge C{{T}^{-2}}{{\left\| \gamma  \right\|}_{{{L}^{2}}\left( {{X}_{\#}} \right)}}.\]
Therefore, the operator ${{L}_{\left( A_{R}^{app},\Phi _{R}^{app} \right)}}$ is strictly positive, and so invertible, and the norm of its inverse is bounded above by the inverse of the smallest eigenvalue of ${{L}_{\left( A_{R}^{app},\Phi _{R}^{app} \right)}}$, thus providing the statement of the proposition.
\end{proof}

This upper bound for the inverse operator $G_{R}$ is valid also when $G_{R}$ is viewed as an operator ${{L}^{2}}\left( {{X}_{\#}},{{r}}drd\theta  \right)\to H_{B}^{2}\left( {{X}_{\#}},{{r}}drd\theta  \right)$:
\begin{proposition}\label{estimate_H2}
There exist constants ${{R}_{0}}>0$ and $C>0$, such that for all sufficiently small $0<R<{{R}_{0}}$ there holds the estimate 
\[{{\left\| {{G}_{R}}\gamma  \right\|}_{H_{B}^{2}\left( {{X}_{\#}} \right)}}\le C{{\left| \log R \right|}^{2}}{{\left\| \gamma  \right\|}_{{{L}^{2}}\left( {{X}_{\#}} \right)}},\]
for all $\gamma \in {{L}^{2}}\left( {{X}_{\#}} \right)$.
\end{proposition}

\begin{proof}
The proof of this statement readily adapts from the proof of Proposition 3.14 and Corollary 3.15 in \cite{Swoboda}; we refer the interested reader to this article for details.
\end{proof}

\subsection{Lipschitz constants for ${{Q}_{R}}$}

The orbit map for any Higgs pair $\left( A,\Phi  \right)$ and any $g=\exp \left( \gamma  \right)$ with $\gamma \in {{\Omega }^{0}}\left( {{X}_{\#}},{{E}_{H}}\left( {{\mathfrak{h}}^{\mathbb{C}}} \right) \right)$ is given by
	\[{{\mathsf{\mathcal{O}}}_{\left( A,\Phi  \right)}}\left( \gamma  \right)={{g}^{*}}\left( A,\Phi  \right)=\left( A+{{g}^{-1}}\left( {{{\bar{\partial }}}_{A}}g \right)-\left( {{\partial }_{A}}g \right){{g}^{-1}},{{g}^{-1}}\Phi g \right),\]
thus
\begin{align*}
\exp {{\left( \gamma  \right)}^{*}}A & = A+\left( {{{\bar{\partial }}}_{A}}-{{\partial }_{A}} \right)\gamma +{{R}_{A}}\left( \gamma  \right)\\
\exp \left( -\gamma  \right)\Phi \exp \left( \gamma  \right) & = \Phi +\left[ \Phi ,\gamma  \right]+{{R}_{\Phi }}\left( \gamma  \right),
\end{align*}
where these reminder terms are
$${{R}_{A}}\left( \gamma  \right)=\exp \left( -\gamma  \right)\left( {{{\bar{\partial }}}_{A}}\exp \left( \gamma  \right) \right)-\left( {{\partial }_{A}}\exp \left( \gamma  \right) \right)\exp \left( -\gamma  \right)-\left( {{{\bar{\partial }}}_{A}}-{{\partial }_{A}} \right)\gamma $$
$${{R}_{\Phi }}\left( \gamma  \right)=\exp \left( -\gamma  \right)\Phi \exp \left( \gamma  \right)-\left[ \Phi ,\gamma  \right]-\Phi. $$
The Taylor series expansion of the operator ${{\mathsf{\mathcal{F}}}_{R}}$ is then
 \[{{\mathsf{\mathcal{F}}}_{R}}\left( \exp \left( \gamma  \right) \right)=\text{p}{{\text{r}}_{1}}\left( {{\mathsf{\mathcal{H}}}_{R}}\left( A,\Phi  \right) \right)+{{L}_{R}}\gamma +{{Q}_{R}}\gamma, \]
 with
 \begin{align*}
 {{Q}_{R}}\left( \gamma  \right) & := {{d}_{A}}\left( {{R}_{A}}\left( \gamma  \right) \right)+\left[ {{\Phi }^{*}},{{R}_{\Phi }}\left( \gamma  \right) \right] + \left[ \Phi ,{{R}_{\Phi }}{{\left( \gamma  \right)}^{*}} \right]\\
 & + \frac{1}{2}\left[ \left( \left( {{{\bar{\partial }}}_{A}}-{{\partial }_{A}} \right)\gamma +{{R}_{A}}\left( \gamma  \right) \right),\left( \left( {{{\bar{\partial }}}_{A}}-{{\partial }_{A}} \right)\gamma +{{R}_{A}}\left( \gamma  \right) \right) \right]\\
 & + \left[ \left( \left[ \Phi ,\gamma  \right]+{{R}_{\Phi }}\left( \gamma  \right) \right),{{\left( \left[ \Phi ,\gamma  \right]+{{R}_{\Phi }}\left( \gamma  \right) \right)}^{*}} \right].
 \end{align*}

\begin{lemma}\label{Q_estimate} Consider the pair $\left( A_{R}^{app},\Phi _{R}^{app} \right)$ in place of $\left( A,\Phi  \right)$ above. Then there exists a constant $C>0$ such that
	\[{{\left\| {{Q}_{R}}\left( {{\gamma }_{1}} \right)-{{Q}_{R}}\left( {{\gamma }_{0}} \right) \right\|}_{{{L}^{2}}\left( {{X}_{\#}} \right)}}\le Cr{{\left\| {{\gamma }_{1}}-{{\gamma }_{0}} \right\|}_{H_{B}^{2}\left( {{X}_{\#}} \right)}},\]
for all $0<r\le 1$ and ${{\gamma }_{0}},{{\gamma }_{1}}\in {{B}_{r}}$, the closed ball of radius $r$ around 0 in $H_{B}^{2}\left( {{X}_{\#}} \right)$.
\end{lemma}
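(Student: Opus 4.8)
The plan is to exploit the fact that $Q_R$ collects precisely the terms of order $\ge 2$ in $\gamma$ in the Taylor expansion of $\mathcal{F}_R\circ\exp$, so that every summand of $Q_R$ is a quadratic or higher multilinear expression in $\gamma$, $d_A\gamma$ and the remainders $R_A(\gamma), R_\Phi(\gamma)$, with coefficients built from the approximate solution $(A_R^{app},\Phi_R^{app})$. For such a superlinear map one obtains a Lipschitz estimate with small constant by writing the difference along the connecting segment,
\[Q_R(\gamma_1)-Q_R(\gamma_0)=\int_0^1 DQ_R\big(\gamma_0+t(\gamma_1-\gamma_0)\big)[\gamma_1-\gamma_0]\,dt,\]
and observing that because each homogeneous piece of $Q_R$ has degree $\ge 2$, its differential $DQ_R(\eta)$ is homogeneous of degree $\ge 1$ in $\eta$. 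Hence $\|DQ_R(\eta)\|\lesssim \|\eta\|_{H_B^2}$ whenever $\|\eta\|_{H_B^2}\le r\le 1$, and since $\eta=\gamma_0+t(\gamma_1-\gamma_0)$ stays in $B_r$ along the segment, integration produces exactly the factor $r$ multiplying $\|\gamma_1-\gamma_0\|_{H_B^2}$.

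The analytic backbone is that, since $\dim_{\mathbb{R}}X_\#=2$, the Sobolev embedding gives $H_B^2(X_\#)\hookrightarrow C^0(X_\#)$ with a constant uniform in the neck length (being a local, unit-scale estimate), so that $H_B^2$ is a Banach algebra under pointwise multiplication and controls the sup norm of $\gamma$. First I would use this to estimate the remainders $R_A(\gamma)$ and $R_\Phi(\gamma)$: expanding $\exp(\pm\gamma)$ as power series, $R_\Phi(\gamma)=\sum_{k\ge 2}c_k(\mathrm{ad}_\gamma)^k\Phi_R^{app}$, while $R_A(\gamma)$ is an analogous series in $\gamma$ and $d_A\gamma$ beginning at quadratic order. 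The algebra property and the $C^0$-bound give convergence of these series in $H_B^1$ together with bounds $\|R_\Phi(\gamma)\|_{H_B^1}\lesssim \|\gamma\|_{H_B^2}^2$ and $\|R_A(\gamma)\|_{H_B^1}\lesssim \|\gamma\|_{H_B^2}^2$ for $\|\gamma\|_{H_B^2}\le 1$, with one derivative to spare so that applying $d_A$, as in the term $d_A(R_A(\gamma))$, still lands in $L^2$.

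With these in hand I would estimate each of the five summands of $Q_R$ separately. The genuinely quadratic pieces, namely the two bracket-square terms $\tfrac12\big[(\bar\partial_A-\partial_A)\gamma+R_A(\gamma),\ (\bar\partial_A-\partial_A)\gamma+R_A(\gamma)\big]$ and $\big[[\Phi,\gamma]+R_\Phi(\gamma),\ ([\Phi,\gamma]+R_\Phi(\gamma))^*\big]$, obey the elementary difference identity $[\xi_1,\xi_1]-[\xi_0,\xi_0]=[\xi_1-\xi_0,\xi_1]+[\xi_0,\xi_1-\xi_0]$, which after the algebra estimate yields a bound $\lesssim r\,\|\gamma_1-\gamma_0\|_{H_B^2}$; the remaining three summands, being built from $R_A,R_\Phi$ and thus of order $\ge 2$, are handled by the segment-integration argument above together with the remainder bounds. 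Summing and absorbing the uniformly bounded operator norms of $d_A$, $\bar\partial_A$, $\partial_A$ and of multiplication by $\Phi_R^{app}$ and $(\Phi_R^{app})^*$ gives the claimed inequality.

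The main obstacle is uniformity of the constant $C$ in the parameter $R$, equivalently in the neck length $T=-\log R$: the operator $Q_R$ depends on $R$ through $(A_R^{app},\Phi_R^{app})$, and near the neck $\Phi_R^{app}$ carries the singular factor $\tfrac{dz}{z}$. The key point, which I would verify by the cylindrical change of coordinates $\zeta=\tau+i\theta$ used in \S6.2 (under which $\tfrac{dz}{z}=-d\zeta$), is that with respect to the singular measure $r^{-1}drd\theta$ the model coefficient $\Phi^{\bmod}=\varphi\,\tfrac{dz}{z}$ becomes multiplication by the constant matrix $\varphi$, so that multiplication by $\Phi_R^{app}$ and the covariant derivatives are bounded on the weighted spaces independently of $T$. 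This $R$-uniform boundedness of the coefficient operators is exactly what forces $C$ to be independent of $R$, and the argument then parallels the corresponding estimate of J. Swoboda \cite{Swoboda}, adapted to the $\mathrm{Sp}(4,\mathbb{R})$ setting.
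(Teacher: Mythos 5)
Your proposal is correct and follows essentially the same route as the paper, whose entire ``proof'' of this lemma is a citation to Lemma 4.1 of \cite{Swoboda}: the argument there is precisely the one you reconstruct, namely exploiting that every summand of $Q_R$ is of degree at least two in $\gamma$, estimating the remainders $R_A(\gamma)$, $R_\Phi(\gamma)$ via the power-series expansion of $\exp(\pm\gamma)$ together with the dimension-two Sobolev embedding $H_B^2\hookrightarrow C^0$ and the resulting multiplication estimates, and obtaining the factor $r$ from the bilinear difference identity and the segment integration, with $R$-uniformity of the constants coming from the cylindrical (unit-scale) geometry of the neck and the boundedness of the model coefficient $\varphi$ in the coordinate $\zeta$. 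Your attention to the uniformity of the Sobolev constants in the neck length $T=-\log R$ and to the term $d_A(R_A(\gamma))$ landing in $L^2$ supplies exactly the details the paper outsources to \cite{Swoboda}.
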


\begin{proof}
See \cite{Swoboda}, Lemma 4.1.
\end{proof}

\section{Gluing theorems}\label{hybrid_Higgs}

The necessary prerequisites are now in place in order to apply the contraction mapping argument described in
\S \ref{contraction_mapping} and correct the approximate solution constructed into an exact solution of the $\text{Sp(4}\text{,}\mathbb{R}\text{)}$-Hitchin equations.

\begin{theorem}\label{exact_solution}
There exists a constant $0<{{R}_{0}}<1$, and for every $0<R<{{R}_{0}}$ there exist a constant ${{\sigma }_{R}}>0$ and a unique section $\gamma \in H_{B}^{2}\left( {{X}_{\#}},\mathfrak{gl}\left( 2 \right) \right)$ satisfying ${{\left\| \gamma  \right\|}_{H_{B}^{2}\left( {{X}_{\#}} \right)}}\le {{\sigma }_{R}}$, so that, for $g=\exp \left( \gamma  \right)$,
	\[\left( {{A}_{\#}},{{\Phi }_{\#}} \right)={{g}^{*}}\left( A_{R}^{app},\Phi _{R}^{app} \right)\]
is an exact solution of the $\mathrm{Sp(4}\text{,}\mathbb{R}\text{)}$-Hitchin equations over the closed surface ${{X}_{\#}}$.
\end{theorem}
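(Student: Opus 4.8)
The plan is to execute the Banach fixed point argument outlined in \S5.1, now that the three analytic estimates it requires have been established. Since the pair $\left( A_{R}^{app},\Phi_{R}^{app} \right)$ already satisfies the second Hitchin equation ${{\bar\partial}_{A_{R}^{app}}}\Phi_{R}^{app}=0$ and this condition is preserved along the complex gauge orbit, it suffices to solve ${{\mathsf{\mathcal{F}}}_{R}}\left( \exp(\gamma) \right)=0$ for $\gamma$ in a small ball of $H_{B}^{2}\left( {{X}_{\#}},\mathfrak{u}\left( 2 \right) \right)$, where ${{\mathsf{\mathcal{F}}}_{R}}=p{{r}_{1}}\circ \mathsf{\mathcal{H}}\circ {{\mathsf{\mathcal{O}}}_{\left( A_{R}^{app},\Phi_{R}^{app} \right)}}\circ \exp$. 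Using the Taylor expansion ${{\mathsf{\mathcal{F}}}_{R}}\left( \exp(\gamma) \right)=p{{r}_{1}}\mathsf{\mathcal{H}}\left( A_{R}^{app},\Phi_{R}^{app} \right)+{{L}_{\left( A_{R}^{app},\Phi_{R}^{app} \right)}}\gamma +{{Q}_{R}}\left( \gamma \right)$ together with Proposition 6.7, which guarantees that ${{G}_{R}}=L_{\left( A_{R}^{app},\Phi_{R}^{app} \right)}^{-1}$ exists for small $R$, solving ${{\mathsf{\mathcal{F}}}_{R}}\left( \exp(\gamma) \right)=0$ is equivalent to finding a fixed point of
\[T\left( \gamma \right):=-{{G}_{R}}\left( p{{r}_{1}}\mathsf{\mathcal{H}}\left( A_{R}^{app},\Phi_{R}^{app} \right)+{{Q}_{R}}\left( \gamma \right) \right).\]

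Next I would verify that $T$ restricts to a contraction of a suitable ball ${{B}_{{{\sigma}_{R}}}}$. For the self-mapping property I would estimate, for $\gamma \in {{B}_{{{\sigma}_{R}}}}$,
\[{{\left\| T\left( \gamma \right) \right\|}_{H_{B}^{2}}}\le {{\left\| {{G}_{R}} \right\|}_{\mathsf{\mathcal{L}}\left( {{L}^{2}},H_{B}^{2} \right)}}\left( {{\left\| p{{r}_{1}}\mathsf{\mathcal{H}}\left( A_{R}^{app},\Phi_{R}^{app} \right) \right\|}_{{{L}^{2}}}}+{{\left\| {{Q}_{R}}\left( \gamma \right) \right\|}_{{{L}^{2}}}} \right),\]
combining the operator bound ${{\left\| {{G}_{R}} \right\|}}\le C{{\left| \log R \right|}^{2}}$ from Proposition 6.7, the curvature error bound ${{\left\| p{{r}_{1}}\mathsf{\mathcal{H}}\left( A_{R}^{app},\Phi_{R}^{app} \right) \right\|}_{{{L}^{2}}}}\le C{{R}^{{\delta}''}}$ from Lemma 4.7, and the Lipschitz estimate of Lemma 6.9 together with ${{Q}_{R}}\left( 0 \right)=0$, which yields ${{\left\| {{Q}_{R}}\left( \gamma \right) \right\|}_{{{L}^{2}}}}\le C{{\sigma}_{R}}{{\left\| \gamma \right\|}_{H_{B}^{2}}}\le C\sigma_{R}^{2}$. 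Choosing ${{\sigma}_{R}}:=K{{\left| \log R \right|}^{2}}{{R}^{{\delta}''}}$ for a constant $K$ larger than the error constant, the linear contribution is a fixed fraction of ${{\sigma}_{R}}$ while the quadratic contribution is of order ${{\left| \log R \right|}^{6}}{{R}^{2{\delta}''}}=o\left( {{\sigma}_{R}} \right)$, so $T\left( {{B}_{{{\sigma}_{R}}}} \right)\subseteq {{B}_{{{\sigma}_{R}}}}$ once $R$ is sufficiently small. For the contraction property I would apply Lemma 6.9 once more to get, for ${{\gamma}_{0}},{{\gamma}_{1}}\in {{B}_{{{\sigma}_{R}}}}$,
\[{{\left\| T\left( {{\gamma}_{1}} \right)-T\left( {{\gamma}_{0}} \right) \right\|}_{H_{B}^{2}}}\le C{{\left| \log R \right|}^{2}}{{\sigma}_{R}}{{\left\| {{\gamma}_{1}}-{{\gamma}_{0}} \right\|}_{H_{B}^{2}}},\]
and since ${{\left| \log R \right|}^{2}}{{\sigma}_{R}}=K{{\left| \log R \right|}^{4}}{{R}^{{\delta}''}}\to 0$ as $R\searrow 0$, the contraction constant is strictly less than $1$ for all $0<R<{{R}_{0}}$.

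Banach's fixed point theorem then furnishes a unique $\gamma \in {{B}_{{{\sigma}_{R}}}}$ with $T\left( \gamma \right)=\gamma$, equivalently ${{\mathsf{\mathcal{F}}}_{R}}\left( \exp(\gamma) \right)=0$; setting $g=\exp(\gamma)$, the pair $\left( {{A}_{\#}},{{\Phi}_{\#}} \right)={{g}^{*}}\left( A_{R}^{app},\Phi_{R}^{app} \right)$ now solves the first Hitchin equation while still solving the second, and is therefore an exact solution of the $\text{Sp(4}\text{,}\mathbb{R}\text{)}$-Hitchin equations on the closed surface ${{X}_{\#}}$. The genuine difficulty in the whole scheme is not this final assembly but the invertibility and norm control of ${{G}_{R}}$ that it rests on: because ${{X}_{\#}}$ develops an arbitrarily long neck as $R\searrow 0$, the smallest eigenvalue of the linearization degenerates, and it is precisely the Cappell–Lee–Miller/Nicolaescu gluing theorem (Theorem 5.9), fed by the triviality of $\ker\left( {{L}_{1}}+L_{2}^{*} \right)$ on $L_{\text{ext}}^{2}\left( X_{\#}^{\times} \right)$ from Proposition 6.4, that supplies the quantitative bound of order ${{\left| \log R \right|}^{2}}$. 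The balance I would have to respect throughout is exactly that this (at most) polynomial-in-$\left| \log R \right|$ blow-up of ${{G}_{R}}$ is dominated by the power-law decay ${{R}^{{\delta}''}}$ of the approximation error, which is what makes both the self-mapping and the contraction estimates close.
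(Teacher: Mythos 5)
Your proposal is correct and follows essentially the same route as the paper's own proof: the identical contraction map $T\left( \gamma \right)=-{{G}_{R}}\left( \mathcal{H}\left( A_{R}^{app},\Phi _{R}^{app} \right)+{{Q}_{R}}\left( \gamma \right) \right)$, with the contraction and self-mapping properties closed by exactly the same three ingredients, namely the ${{\left| \log R \right|}^{2}}$ bound on ${{G}_{R}}$ (Proposition 6.7), the ${{R}^{{{\delta }''}}}$ curvature error (Lemma 4.7), and the Lipschitz estimate for ${{Q}_{R}}$ (Lemma 6.8, which you cite as 6.9; likewise the kernel triviality is Proposition 6.3, not 6.4). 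The only substantive deviation is your choice of radius ${{\sigma }_{R}}\sim {{\left| \log R \right|}^{2}}{{R}^{{{\delta }''}}}$ versus the paper's ${{\sigma }_{R}}={{C}^{-1}}{{\left| \log R \right|}^{-2-\varepsilon }}$; both choices make the estimates close, and yours in fact gives the sharper conclusion that the correcting gauge transformation is of the same (exponentially small in $T=\left| \log R \right|$) order as the error of the approximate solution.
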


\begin{proof}
We show that for ${{\sigma }_{R}}>0$ sufficiently small, the operator $T$ from \S \ref{contraction_mapping} defined by $T\left( \gamma  \right)=-{{G}_{R}}\left( \mathsf{\mathcal{H}}\left( \left( A_{R}^{app},\Phi _{R}^{app} \right) \right)+{{Q}_{R}}\left( \gamma  \right) \right)$ is a contraction of ${{B}_{{{\sigma }_{R}}}}$, the open ball of radius ${{\sigma }_{R}}$. From Proposition \ref{estimate_H2} and Lemma \ref{Q_estimate} we get
 \begin{align*}{{\left\| T\left( {{\gamma }_{1}}-{{\gamma }_{0}} \right) \right\|}_{H_{B}^{2}\left( {{X}_{\#}} \right)}} & = {{\left\| {{G}_{R}}\left( {{Q}_{R}}\left( {{\gamma }_{1}} \right)-{{Q}_{R}}\left( {{\gamma }_{0}} \right) \right) \right\|}_{H_{B}^{2}\left( {{X}_{\#}} \right)}}\\
 & \le C{{\left( \log R \right)}^{2}}{{\left\| {{Q}_{R}}\left( {{\gamma }_{1}} \right)-{{Q}_{R}}\left( {{\gamma }_{0}} \right) \right\|}_{{{L}^{2}}\left( {{X}_{\#}} \right)}}\\
 & \le C{{\left( \log R \right)}^{2}}{{\sigma }_{R}}{{\left\| {{\gamma }_{1}}-{{\gamma }_{0}} \right\|}_{H_{B}^{2}\left( {{X}_{\#}} \right)}}.
 \end{align*}
 Let $\varepsilon >0$ and set ${{\sigma }_{R}}:={{C}^{-1}}{{\left| \log R \right|}^{-2-\varepsilon }}$. Then for all $0<R<{{e}^{-1}}$ it follows that $C{{\left( \log R \right)}^{2}}{{\sigma }_{R}}<1$ and therefore $T$ is a contraction on the ball of radius ${{\sigma }_{R}}$.\\
 Furthermore, since ${{Q}_{R}}\left( 0 \right)=0$, using again Proposition \ref{estimate_H2} and Lemma \ref{Q_estimate} we have
 \begin{align*}
 {{\left\| T\left( 0 \right) \right\|}_{H_{B}^{2}\left( {{X}_{\#}} \right)}} & = {{\left\| {{G}_{R}}\left( \text{p}{{\text{r}}_{1}}\left( {{\mathsf{\mathcal{H}}}_{R}}\left( A_{R}^{app},\Phi _{R}^{app} \right) \right) \right) \right\|}_{H_{B}^{2}\left( {{X}_{\#}} \right)}}\\
 & \le C{{\left( \log R \right)}^{2}}{{\left\| \text{p}{{\text{r}}_{1}}\left( {{\mathsf{\mathcal{H}}}_{R}}\left( A_{R}^{app},\Phi _{R}^{app} \right) \right) \right\|}_{{{L}^{2}}\left( {{X}_{\#}} \right)}}\\
 & \le C{{\left( \log R \right)}^{2}}{{R}^{{{\delta }''}}}.
 \end{align*}
 Thus, when ${{R}_{0}}$ is chosen to be sufficiently small, then ${{\left\| T\left( 0 \right) \right\|}_{H_{B}^{2}\left( {{X}_{\#}} \right)}}<\frac{1}{10}{{\sigma }_{R}}$, for all $0<R<{{R}_{0}}$ and for the above choice of ${{\sigma }_{R}}$; thus the ball ${{B}_{{{\sigma }_{R}}}}$ is mapped to itself by $T$.
 \end{proof}

\begin{remark}The analytic arguments developed in the preceding sections provide also that the Main Theorem 1.1 in \cite{Swoboda} also holds for solutions to the $\text{Sp(4}\text{,}\mathbb{R}\text{)}$-Hitchin equations. In particular, we have the following:
\end{remark}
\begin{corollary}
Let $\left( \Sigma ,{{J}_{0}} \right)$ be a Riemann surface with nodes at a finite collection of points $D \subset \Sigma $. Let $\left( {{A}_{0}},{{\Phi }_{0}} \right)$ be a solution to the $\mathrm{Sp(4}\text{,}\mathbb{R}\text{)}$-Hitchin equations with logarithmic singularities at $D$, which is obtained from a solution to the $\mathrm{SL(2}\text{,}\mathbb{R}\text{)}$-Hitchin equations via an embedding $\rho :\mathrm{SL(2}\text{,}\mathbb{R}\text{)}\hookrightarrow \mathrm{Sp(4}\text{,}\mathbb{R}\text{)}$ that maps a copy of a maximal compact subgroup of $\mathrm{SL(2}\text{,}\mathbb{R}\text{)}$ into a maximal compact subgroup of $\mathrm{Sp(4}\text{,}\mathbb{R}\text{)}$. Suppose that there is a model solution near those nodes which is of the form described in \S \ref{section_local_model}. Let $\left( \Sigma ,{{J}_{i}} \right)$ be a sequence of smooth Riemann surfaces converging uniformly to $\left( \Sigma ,{{J}_{0}} \right)$. Then, for every sufficiently large $i\in \mathbb{N}$, there exists a smooth solution $\left( {{A}_{i}},{{\Phi }_{i}} \right)$ on $\left( \Sigma ,{{J}_{i}} \right)$, such that $\left( {{A}_{i}},{{\Phi }_{i}} \right)\to \left( {{A}_{0}},{{\Phi }_{0}} \right)$ as $i\to \infty $, uniformly on compact subsets of $\Sigma \backslash D$.
\end{corollary}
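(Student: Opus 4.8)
The plan is to recognize this desingularization statement as a direct consequence of the gluing machinery developed in Sections 4--6, applied in the form packaged by Theorem 7.1. The degeneration $(\Sigma, J_i) \to (\Sigma, J_0)$ is, near each node $p \in \mathfrak{p}$, precisely the neck-elongation picture of \S6.1: the smooth surface $(\Sigma, J_i)$ contains a long Euclidean cylinder of length $\sim T_i = |\log R_i|$ for a sequence of parameters $R_i \searrow 0$ with $T_i \to \infty$, and the nodal solution $(A_0, \Phi_0)$ with logarithmic singularities at $\mathfrak{p}$ plays the role of the limiting exact solution $(A_0^{app}, \Phi_0^{app})$ at $T = \infty$. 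First I would fix this correspondence between the index $i$ and the parameter $R_i$, and then, using the embedding $\rho$ together with the hypothesis that a maximal compact subgroup of $\text{SL(2}\text{,}\mathbb{R}\text{)}$ is carried into a maximal compact subgroup of $\text{Sp(4}\text{,}\mathbb{R}\text{)}$, extend the local $\text{SL(2}\text{,}\mathbb{R}\text{)}$-model to the $\text{Sp(4}\text{,}\mathbb{R}\text{)}$-model of \S4.1 exactly as in \S4.4. Grafting this model onto $(A_0, \Phi_0)$ by means of the cut-off functions $\chi_{R_i}$ produces an approximate solution $(A_{R_i}^{app}, \Phi_{R_i}^{app})$ on $(\Sigma, J_i)$; as in \S4.7 it satisfies the second Hitchin equation exactly, while Lemma 4.7 bounds the error in the first equation by $C R_i^{\delta''}$.

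Next I would invoke the contraction mapping argument of \S5.1 in the form of Theorem 7.1. Its hypotheses hold here for the same reasons as in the gluing setting: the linearization $L_{(A_{R_i}^{app}, \Phi_{R_i}^{app})}$ is invertible with the bound $\|G_{R_i}\| \le C|\log R_i|^2$ of Proposition 6.7, and $Q_{R_i}$ obeys the Lipschitz estimate of Lemma 6.8. Both rest ultimately on Proposition 6.4, the triviality of the extended-$L^2$ kernel of the limiting operator $L_1 + L_2^*$, whose proof used only the specific non-unitary form of the model Higgs field $\varphi^{\bmod}$ coming from $\phi_{irr}$ and $\psi$ at a zero of $\det \Phi$. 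Since the present hypotheses force exactly such a model near each node, that node-by-node computation transfers verbatim. Theorem 7.1 then yields, for every sufficiently large $i$, a unique section $\gamma_i \in H_B^2(\Sigma, \mathfrak{u}(2))$ with $\|\gamma_i\|_{H_B^2} \le \sigma_{R_i} = C^{-1}|\log R_i|^{-2-\varepsilon}$ such that $(A_i, \Phi_i) := \exp(\gamma_i)^*(A_{R_i}^{app}, \Phi_{R_i}^{app})$ is an exact solution of the $\text{Sp(4}\text{,}\mathbb{R}\text{)}$-Hitchin equations on $(\Sigma, J_i)$.

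Finally, for the convergence, I would argue as follows: on any compact set $K \subset \Sigma \setminus \mathfrak{p}$, once $i$ is large enough the support of $\chi_{R_i}$ has retreated into the neck, so that $(A_{R_i}^{app}, \Phi_{R_i}^{app})|_K = (A_0, \Phi_0)|_K$. Since $\|\gamma_i\|_{H_B^2} \le \sigma_{R_i} \to 0$ and $H_B^2$ embeds continuously in $C^0$ over the real surface, the correcting gauge transformations $\exp(\gamma_i)$ tend to the identity uniformly on $K$, whence $(A_i, \Phi_i) \to (A_0, \Phi_0)$ uniformly on compact subsets of $\Sigma \setminus \mathfrak{p}$. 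The main obstacle in this program is not the fixed-point step itself but the uniform control of the inverse linearization as the neck lengthens, namely Proposition 6.7; this is exactly where the Cappell--Lee--Miller gluing theorem and the kernel computation of \S6.3 are indispensable, and it is the only place where the structure of $\text{Sp(4}\text{,}\mathbb{R}\text{)}$ and the maximal-compact-preserving embedding $\rho$ genuinely enter.
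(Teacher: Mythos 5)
Your proposal is correct and follows essentially the same route as the paper: the paper itself proves this corollary only by the remark that the analytic machinery of Sections 4--6 (model grafting via cut-off functions, the Cappell--Lee--Miller argument giving invertibility of the linearization with the $C|\log R|^{2}$ bound, and the contraction mapping of \S 5.1/Theorem 7.1) extends Swoboda's Theorem 1.1 to the $\text{Sp(4},\mathbb{R}\text{)}$ setting, which is precisely the argument you spell out, including the identification of the degenerating complex structures $J_i$ with the neck parameter $R_i$ and the observation that $\|\gamma_i\|_{H_B^2}\le\sigma_{R_i}\to 0$ forces the correcting gauge transformations to converge to the identity away from the nodes. Your closing remark correctly isolates the one group-specific ingredient, the kernel computation of \S 6.3, which is exactly what the paper also flags as the step that must be checked for the given embedding.
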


Theorem \ref{exact_solution} now implies that for $\bar{\partial }:=A_{\#}^{0,1}$, the Higgs bundle $\left( {{E}_{\#}}:=\left( {{\mathbb{E}}_{\#}},\bar{\partial } \right),{{\Phi }_{\#}} \right)$ is a polystable $\text{Sp(4}\text{,}\mathbb{R}\text{)}$-Higgs bundle over the complex connected sum ${{X}_{\#}}$. Collecting the steps from the last three sections 4,5,6 we now have our main result:

\begin{theorem}\label{main_theorem}
Let $X_{1}$ be a closed Riemann surface of genus $g_{1}$ and ${{D}_{1}}=\left\{ {{p}_{1}},\ldots ,{{p}_{s}} \right\}$ be a collection of $s$-many distinct points on $X_{1}$. Consider respectively a closed Riemann surface $X_{2}$ of genus $g_{2}$ and a collection of also $s$-many distinct points ${{D}_{2}}=\left\{ {{q}_{1}},\ldots ,{{q}_{s}} \right\}$ on $X_{2}$. Let $\left( {{E}_{1}},{{\Phi }_{1}} \right)\to {{X}_{1}}$ and $\left( {{E}_{2}},{{\Phi }_{2}} \right)\to {{X}_{2}}$ be parabolic polystable $\mathrm{Sp(4}\text{,}\mathbb{R}\text{)}$-Higgs bundles with corresponding solutions to the Hitchin equations $\left( {{A}_{1}},{{\Phi }_{1}} \right)$ and $\left( {{A}_{2}},{{\Phi }_{2}} \right)$. Assume that these solutions agree with model solutions $\left( A_{1,{{p}_{i}}}^{\bmod },\Phi _{1,{{p}_{i}}}^{\bmod } \right)$ and $\left( A_{2,{{q}_{j}}}^{\bmod },\Phi _{2,{{q}_{j}}}^{\bmod } \right)$ near the points ${{p}_{i}}\in {{D}_{1}}$ and ${{q}_{j}}\in {{D}_{2}}$, and that the model solutions satisfy $\left( A_{1,{{p}_{i}}}^{\bmod },\Phi _{1,{{p}_{i}}}^{\bmod } \right)=-\left( A_{2,{{q}_{j}}}^{\bmod },\Phi _{2,{{q}_{j}}}^{\bmod } \right)$, for $s$-many possible pairs of points $\left( {{p}_{i}},{{q}_{j}} \right)$. Then there is a polystable $\mathrm{Sp(4}\text{,}\mathbb{R}\text{)}$-Higgs bundle $\left( E_{\#},\Phi_{\#}  \right)\to {{X}_{\#}}$, constructed over the complex connected sum of Riemann surfaces ${{X}_{\#}}={{X}_{1}}\#{{X}_{2}}$, which agrees with the initial data over ${{X}_{\#}}\backslash {{X}_{1}}$ and ${{X}_{\#}}\backslash {{X}_{2}}$.
\end{theorem}

\begin{remark}
In \S \ref{pairs_model_represenations} we checked that for the particular parabolic $\text{Sp(4}\text{,}\mathbb{R}\text{)}$-Higgs bundles arising from the representations ${{\phi }_{irr}}$ and $\psi $, the main assumption in the theorem does apply.
\end{remark}

\begin{definition}
We call an $\text{Sp(4}\text{,}\mathbb{R}\text{)}$-Higgs bundle constructed by the procedure developed in \S 4-7 a \emph{hybrid $\mathrm{Sp(4}\text{,}\mathbb{R}\text{)}$-Higgs bundle}.
\end{definition}

\section{Topological invariants}

In this final section, we identify the connected component of the moduli space ${{\mathsf{\mathcal{M}}}^{\max }}$ a hybrid Higgs bundle lies, given a choice of stable parabolic ingredients to glue. For this, we need to look at how the Higgs bundle topological invariants behave under the complex connected sum operation. As an application, we see that under the right initial choices for the gluing data, we can find model Higgs bundles in the exceptional components of the maximal $\text{Sp(4}\text{,}\mathbb{R}\text{)}$-Higgs bundle moduli space; these models are described by the hybrid Higgs bundles of \S \ref{hybrid_Higgs}.

\subsection{Degree of a connected sum bundle}

In \cite{BiquardGM} a general notion of parabolic degree was introduced for parabolic principal $H^{\mathbb{C}}$-bundles $E$ equipped with a parabolic structure $\alpha$. This degree is defined using Chern-Weil theory in terms of a holomorphic reduction $\sigma$ of the structure group of $E$ from $H^{\mathbb{C}}$ to a parabolic subgroup $P$, and any antidominant character $\chi$ of $\mathfrak{p}:=\text{Lie}\left( P \right)$.  The next proposition describes an additivity property for this parabolic degree over the complex connected sum; the reader is referred to \S 2 and Appendix B of \cite{BiquardGM} for the precise definitions in this principal $H^{\mathbb{C}}$-bundle setting; we next introduce some notation from this article:

Let $H^{\mathbb{C}}$ be a reductive complex Lie group. For any element $s \in \sqrt{-1} \mathfrak{h}$, where $\mathfrak{h} = \text{Lie} \left(H \right)$, let $P = P_{s} \subset H^{\mathbb{C}}$ be the corresponding parabolic subgroup and let $\chi: \mathfrak{p} \to \mathbb{C}$ be the antidominant character defined by $\chi \left( \alpha \right) = \left\langle \alpha, s \right\rangle$, where $\left\langle \cdot ,\cdot  \right\rangle :{{\mathfrak{h}}^{\mathbb{C}}}\times {{\mathfrak{h}}^{\mathbb{C}}}\to \mathbb{C}$ is the extension of an invariant scalar product on $\mathfrak{h}$ to a Hermitian pairing. Let also ${{E}_{\sigma }}$ denote the corresponding $P$-principal bundle to a holomorphic reduction $\sigma$ of the structure group of $E$ from ${{H}^{\mathbb{C}}}$ to $P$.  For $N\subset P$ the unipotent part of $P$, choose the Levi subgroup $L={{Z}_{{{H}^{\mathbb{C}}}}}\left( s \right)\subset P$. Then, there is a well-defined $L$-action on ${{{E}_{\sigma }}}/{N}\;$ which turns it into a principal $L$-bundle, which we denote by ${{E}_{\sigma ,L}}$. Moreover, since $L={{Z}_{{{H}^{\mathbb{C}}}}}\left( s \right)$, there is a section
	\[{{s}_{\sigma }}\in \Gamma \left( {{E}_{\sigma ,L}}\left( \mathfrak{l}\cap \sqrt{-1}\mathfrak{h} \right) \right),\] 
canonically defined by $s$, where $\mathfrak{l}=\text{Lie}\left( L \right)$. Considering now a smooth metric $h$ on $E$, we get from both $h$ and $\sigma$, a reduction of structure group of $E$ from  ${{H}^{\mathbb{C}}}$ to $P\cap H$. Denote this bundle by ${{E}_{\sigma ,L}}$, which comes equipped with the metric ${{h}_{\sigma ,L}}$ induced by $h$. Lastly, let ${{F}_{h,L}}$ be the curvature for the Chern connection on ${{E}_{\sigma ,L}}$ associated to ${{h}_{\sigma ,L}}$.

Let $X_{1}$ and $X_{2}$ be closed Riemann surfaces with divisors $D_{1}$ and $D_{2}$ of $s$-many distinct points on each, and let ${{V}_{1}},{{V}_{2}}$ be two parabolic principal ${{H}^{\mathbb{C}}}$-bundles with parabolic structures $\alpha_{1}$, $\alpha_{2}$ over $X_{1}$, $X_{2}$ respectively. Assume that the underlying smooth bundles ${{\mathbb{V}}_{1}},{{\mathbb{V}}_{2}}$ come equipped with adapted hermitian metrics ${{h}_{1}},{{h}_{2}}$. Let $\left( {{\mathbb{V}}_{1}}\#{{\mathbb{V}}_{2}},{{h}_{\#}} \right)$ be the smooth hermitian bundle over the complex connected sum ${{X}_{\#}}$ of $X_{1}$ and $X_{2}$. The hermitian metric ${{h}_{\#}}$ coincides with ${{h}_{1}}$ and ${{h}_{2}}$ in a neighborhood of ${{X}_{1}}\backslash \Omega $ and ${{X}_{2}}\backslash \Omega $ respectively, where $\Omega$ is the neck region in the connected sum construction. We have the following:

\begin{proposition}\label{additivity_pardeg}
Let ${{X}_{\#}}=X_{1}\#X_{2}$ be the complex connected sum of two closed Riemann surfaces $X_{1}$ and $X_{2}$ with divisors $D_{1}$ and $D_{2}$ of $s$-many distinct points on each surface, and let ${{V}_{1}},{{V}_{2}}$ be parabolic principal ${{H}^{\mathbb{C}}}$-bundles over $X_{1}$ and $X_{2}$ respectively. Fix an antidominant character $\chi$ of $\mathfrak{p}$ and let $\sigma_{1}$, $\sigma_{2}$ be holomorphic reductions of the structure group of $V_{1}$, $V_{2}$ respectively from $H^{\mathbb{C}}$ to $P$. Assuming that the parabolic bundles $V_{1}$ and $V_{2}$ are glued to a bundle 
${{V}_{1}}\#{{V}_{2}}$, denote by $\sigma_{\#}$ the holomorphic reduction of the structure group of ${{V}_{1}}\#{{V}_{2}} $  from ${{H}^{\mathbb{C}}}$ to $P$ induced by $\sigma_{1}$ and $\sigma_{2}$. Then, the following identity holds:
\[\deg \left( {{V}_{1}}\#{{V}_{2}} \right)\left( \sigma_{\#} ,\chi  \right)=par{{\deg }_{{{\alpha }_{1}}}}\left( {{V}_{1}} \right)\left( \sigma_{1} ,\chi  \right)+par{{\deg }_{{{\alpha }_{2}}}}\left( {{V}_{2}} \right)\left( \sigma_{2} ,\chi  \right).\]
\end{proposition}

\begin{proof}
Consider smooth metrics ${{\hbar }_{1}},{{\hbar }_{2}}$ on the principal ${{H}^{\mathbb{C}}}$-bundles ${{V}_{1}},{{V}_{2}}$ defined over $X_{1}$ and $X_{2}$, which coincide with the adapted metrics ${{h}_{1}},{{h}_{2}}$ on ${{X}_{1}}\backslash {{D}_{1}}$, ${{X}_{2}}\backslash {{D}_{2}}$ respectively.\\
For $v>0$, let ${{X}_{i,v}}:=\left\{ x\in {{X}_{{i}}}\left| d\left( x,D \right)\ge {{e}^{-v}} \right. \right\}$ and ${{B}_{i,v}}:={{X}_{{i}}}\backslash {{X}_{i,v}}$, for $i=1,2$. For holomorphic reductions $\sigma_{i} $ and an antidominant character $\chi $, the metrics ${{\hbar }_{i}},{{h}_{i}}$ induce metrics ${{\hbar }_{i,L}},{{h}_{i,L}}$ on ${{\left( {{V}_{i}} \right)}_{\sigma ,L}}$ with curvature ${{F}_{{{h}_{i}},L}}$ and ${{F}_{{{\hbar }_{i}},L}}$ respectively. Similarly, the smooth metric ${{h}_{\#}}$ on ${{V}_{1}}\#{{V}_{2}}$ induces a metric ${{h}_{\#,L}}$ on ${{\left( {{V}_{1}}\#{{V}_{2}} \right)}_{\sigma ,L}}$ with curvature ${{F}_{{{h}_{\#}},L}}$. We now have:
\begin{align*}
& \deg \left( {{V}_{1}}\#{{V}_{2}} \right)\left( \sigma ,\chi  \right)=\frac{\sqrt{-1}}{2\pi }\int\limits_{{{X}_{\#}}}{\left\langle {{F}_{{{h}_{\#}},L}},{{s}_{\sigma }} \right\rangle }\\
& =\frac{\sqrt{-1}}{2\pi }\int\limits_{{{X}_{1,v}}}{\left\langle {{F}_{{{h}_{1}},L}},{{s}_{\sigma }} \right\rangle }+\frac{\sqrt{-1}}{2\pi }\int\limits_{{{X}_{2,v}}}{\left\langle {{F}_{{{h}_{2}},L}},{{s}_{\sigma }} \right\rangle }+\frac{\sqrt{-1}}{2\pi }\int\limits_{{{X}_{\#}}\backslash \left( {{X}_{1,v}}\cup {{X}_{2,v}} \right)}{\left\langle {{F}_{{{h}_{\#}},L}},{{s}_{\sigma }} \right\rangle }.
\end{align*}
Now notice:
  \[\frac{\sqrt{-1}}{2\pi }\int\limits_{{{X}_{1,v}}}{\left\langle {{F}_{{{h}_{1}},L}},{{s}_{\sigma }} \right\rangle }=\frac{\sqrt{-1}}{2\pi }\int\limits_{{{X}_{{1}}}}{\left\langle {{F}_{{{\hbar }_{1}},L}},{{s}_{\sigma }} \right\rangle }-\frac{\sqrt{-1}}{2\pi }\int\limits_{{{B}_{1,v}}}{\left\langle {{F}_{{{h}_{1}},L}},{{s}_{\sigma }} \right\rangle }\]
and
  \[\frac{\sqrt{-1}}{2\pi }\int\limits_{{{X}_{{1}}}}{\left\langle {{F}_{{{\hbar }_{1}},L}},{{s}_{\sigma }} \right\rangle }=\deg \left( {{\mathbb{V}}_{1}} \right)\left( \sigma ,\chi  \right);\]
similarly for the integral over ${{X}_{2,v}}$.
Therefore, for every $v>0$,
\begin{align*}
\deg \left( {{V}_{1}}\#{{V}_{2}} \right)\left( \sigma ,\chi  \right) & = \deg \left( {{V}_{1}} \right)\left( \sigma ,\chi  \right) - \frac{\sqrt{-1}}{2\pi }\int\limits_{{{B}_{1,v}}}{\left\langle {{F}_{{{h}_{1}},L}},{{s}_{\sigma }} \right\rangle } + \deg \left( {{V}_{2}} \right)\left( \sigma ,\chi  \right) \\
& - \frac{\sqrt{-1}}{2\pi }\int\limits_{{{B}_{2,v}}}{\left\langle {{F}_{{{h}_{2}},L}},{{s}_{\sigma }} \right\rangle } + \frac{\sqrt{-1}}{2\pi }\int\limits_{{{X}_{\#}}\backslash \left( {{X}_{1,v}}\cup {{X}_{2,v}} \right)}{\left\langle {{F}_{{{h}_{\#}},L}},{{s}_{\sigma }} \right\rangle }.
\end{align*}
Passing to the limit as $v\to +\infty $, the last integral vanishes, while each integral over ${{B}_{i,v}}$ for $i=1,2$ converges to the local term measuring the contribution of the parabolic structure in the definition of the parabolic degree; the latter is the content of Equation (2.10) established within the proof of Lemma 2.13 in \cite{BiquardGM}. The desired identity now follows.
\end{proof}

Proposition \ref{additivity_pardeg} implies in particular that the complex connected sum of \emph{maximal} parabolic $\text{Sp(4}\text{,}\mathbb{R}\text{)}$-Higgs bundles is a \emph{maximal} (non-parabolic) $\text{Sp(4}\text{,}\mathbb{R}\text{)}$-Higgs bundle. For a parabolic principal $\text{Sp(4}\text{,}\mathbb{R}\text{)}$-bundle the general situation of Proposition \ref{additivity_pardeg} describes an additivity property for the parabolic degree of the underlying vector bundle in the data as in Definition \ref{par_toledo_Sp4} (cf. \S \ref{section_parabolic_Sp4} of the present article and Example A.25 in \cite{KSZ}). This is the analogue in the language of Higgs bundles of the additivity property for the Toledo invariant from the point of view of fundamental group representations (Proposition \ref{additivity_Toledo_rep}).

\subsection{Model maximal parabolic $\text{Sp}\left( 4,\mathbb{R} \right)$-Higgs bundles.}\label{Examples_of_maximal}

In the sequel, we describe the construction of specific parabolic $\text{Sp(4}\text{,}\mathbb{R}\text{)}$-Higgs bundle models, which will be used in order to provide model $\text{Sp(4}\text{,}\mathbb{R}\text{)}$-Higgs bundles lying in all the $2g-3$ exceptional components of ${{\mathsf{\mathcal{M}}}^{\max }}$. These are obtained by extending the parabolic $\text{SL(2}\text{,}\mathbb{R}\text{)}$-data from \S \ref{section_parabolic_SL2} using certain embeddings of $\text{SL(2}\text{,}\mathbb{R}\text{)}$ into $\text{Sp(4}\text{,}\mathbb{R}\text{)}$. Note that this idea was also used in \cite{BGGDeformations} in the non-parabolic case.

\begin{example}\label{example_Hitchin}
Let $\left( E_{1}^{\left( 2 \right)},\Phi _{1}^{\left( 2 \right)} \right)$ be a parabolic $\text{SL}\left( 2,\mathbb{R} \right)$-Higgs bundle over the pair $\left( {{X}_{1}},{{D}_{1}} \right)$ of a Riemann surface and a divisor of $s$-many distinct points as in \S \ref{section_parabolic_SL2}, that is, 
	\[E_{1}^{\left( 2 \right)}={{\left( {{L}_{1}}\otimes {{\iota }_{1}} \right)}^{*}}\oplus {{L}_{1}},\]
for ${{L}_{1}}\to {{X}_{1}}$ a line bundle with $L_{1}^{2}\cong {{K}_{{{X}_{1}}}}$, ${{\iota }_{1}}\cong {{\mathsf{\mathcal{O}}}_{{{X}_{1}}}}\left( {{D}_{1}} \right)$ and assuming that the bundle ${{L}_{1}}$ is equipped with the trivial flag ${{\left( {{L}_{1}} \right)}_{{{x}_{i}}}}\supset \left\{ 0 \right\}$ and weight $\frac{1}{2}$ for every point ${{x}_{i}}\in {{D}_{1}}$. Moreover, the Higgs field is of the form 
	\[\Phi _{1}^{\left( 2 \right)}\in {{H}^{0}}\left( {{X}_{1}},\text{End}\left( E_{1}^{\left( 2 \right)} \right)\otimes {{K}_{{{X}_{1}}}}\otimes {{\iota }_{1}} \right).\]
We use the embedding ${{\phi }_{irr}}$ from (\ref{irreducible_rep}) to extend this pair to parabolic $\text{Sp}\left( 4,\mathbb{R} \right)$-data as follows. Under an appropriate change of basis for $\text{Sy}{{\text{m}}^{3}}{{\mathbb{R}}^{2}}\otimes \mathbb{C}$ determined by a transformation matrix, say $S$, we may write the matrix for the irreducible representation with respect to the new basis as
	\[{{\tilde{\phi }}_{irr}}\left( A \right)=\left( \begin{matrix}
   {{\nu }^{3}} & 0 & 0 & 0  \\
   0 & {{\nu }^{-1}} & 0 & 0  \\
   0 & 0 & {{\nu }^{-3}} & 0  \\
   0 & 0 & 0 & \nu   \\
\end{matrix} \right),\]
with $\nu =a+ic$ for a matrix $A=\left( \begin{matrix}
   a & -c  \\
   c & a  \\
\end{matrix} \right)\in \text{SO}\left( 2 \right)$; indeed, such a basis can be chosen so that ${{\tilde{\phi }}_{irr}}={{S}^{-1}}{{\phi }_{irr}}S$ and $J={{S}^{T}}JS$ for the antisymmetric matrix $J=\left( \begin{matrix}
   0 & \text{Id}  \\
   -\text{Id} & 0  \\
\end{matrix} \right)$ giving the symplectic form (cf. \S 8.1 in \cite{BGGDeformations} for a more detailed description).\\
The embedding of the parabolic  $\text{SL}\left( 2,\mathbb{R} \right)$-data $\left( E_{1}^{\left( 2 \right)},\Phi _{1}^{\left( 2 \right)} \right)$ is now obtained by applying ${{\phi }_{irr}}$ to ${{T}^{-1}}\left( \begin{matrix}
   {{l}_{ij}} & 0  \\
   0 & l_{ij}^{-1}  \\
\end{matrix} \right)T$ and ${{T}^{-1}}\left( \begin{matrix}
   0 & {{{\tilde{\beta }}}_{i}}  \\
   {{{\tilde{\gamma }}}_{i}} & 0  \\
\end{matrix} \right)T$ for the bundle and the Higgs field respectively, where $T=\frac{1}{2}\left( \begin{matrix}
   1 & i  \\
   1 & -i  \\
\end{matrix} \right)$, $\left\{ {{l}_{ij}} \right\}$ is a cocycle defining ${{L}_{1}}$, and ${{\tilde{\beta }}_{i}}$, ${{\tilde{\gamma }}_{i}}$ are the local descriptions of the Higgs field 
$\left( \tilde{\beta },\tilde{\gamma } \right)$ in the bundle map 
$\Phi _{1}^{\left( 2 \right)}=\left( \begin{matrix}
   0 & {\tilde{\beta }}  \\
   {\tilde{\gamma }} & 0  \\
\end{matrix} \right):E_{1}^{\left( 2 \right)}\to E_{1}^{\left( 2 \right)}\otimes {{K}_{{{X}_{1}}}}\otimes {{\iota }_{1}}$. \\
The Higgs bundle so obtained is a parabolic  $\text{SL}\left( 4,\mathbb{C} \right)$-Higgs bundle which is compatible with the symplectic form defined by $J$, this means, it is of the form 
	\[\left( {{V}_{1}}\oplus V_{1}^{\vee },{{\phi }_{irr,*}}\left| _{{{\mathfrak{m}}^{\mathbb{C}}}\left( \text{SL}\left( 2,\mathbb{C} \right) \right)} \right.\left( \Phi _{1}^{\left( 2 \right)} \right) \right),\]
where $V_{1}^{\vee }$ denotes the parabolic dual of ${{V}_{1}}$, which is then given by 
\[{{V}_{1}}=\left( L^{3}_{1}\otimes \iota_{1}  \right)\oplus {{\left( L_{1}\otimes \iota_{1}  \right)}^{*}}\]
and it comes equipped with a parabolic structure defined by a trivial flag ${{\left( {{V}_{1}} \right)}_{{{x}_{i}}}}\supset \left\{ 0 \right\}$ and weight $\frac{1}{2}$ for every ${{x}_{i}}\in D$.\\
Moreover, ${{V}_{1}}$ can be expressed as ${{V}_{1}}={{N}_{1}}\oplus N_{1}^{\vee} \otimes K_{X_{1}} \otimes \iota_{1}$, where $ N_{1}^{\vee}$ denotes the parabolic dual of  $N_{1}$. Indeed, for ${{N}_{1}}=L_{1}^{3}\otimes \iota_{1} $ we see that
	\[N_{1}^{\vee}\otimes K_{X_{1}} \otimes \iota_{1}={{\left( L_{1}^{3}\otimes \iota_{1}  \right)}^{*}}\otimes {{\iota_{1} }^{*}} \otimes K_{X_{1}} \otimes \iota_{1}=L_{1}^{-3}\otimes {{\iota_{1} }^{*}} \otimes L_{1}^{2}={{\left( L_{1}\otimes \iota_{1}  \right)}^{*}}.\]
It can be checked that $\left( {{V}_{1}},{{\beta }_{1}},{{\gamma }_{1}} \right)$  is a parabolic \emph{stable} $\text{Sp(4}\text{,}\mathbb{R}\text{)}$-Higgs bundle.
Also notice that
\begin{align*}
par\deg {{V}_{1}} & = par\deg \left( L_{1}^{3}\otimes \iota_{1}  \right)+par\deg {{\left( L_{1}\otimes \iota_{1}  \right)}^{*}} \\
	& = 3g-3+s+\frac{s}{2}+1-g-s+\frac{s}{2}=2g-2+s.
\end{align*}
Therefore, the triple $\left( {{V}_{1}},{{\beta }_{1}},{{\gamma }_{1}} \right)$ defines a \emph{model maximal parabolic $\mathrm{Sp(4}\text{,}\mathbb{R}\text{)}$-Higgs bundle}.
\end{example}

\begin{example}\label{example_diagonal}
In a similar way we may construct a triple $\left( {{V}_{2}},{{\beta }_{2}},{{\gamma }_{2}} \right)$ defining a maximal parabolic $\text{Sp(4}\text{,}\mathbb{R}\text{)}$-Higgs bundle over the pair $\left( {{X}_{2}},{{D}_{2}} \right)$, induced by the diagonal embedding ${{\phi }_{\Delta }}$ described in \S \ref{maximal_ representations} of a model parabolic $\text{SL(2}\text{,}\mathbb{R}\text{)}$-Higgs bundle $\left( E_{2}^{\left( 2 \right)},\Phi _{2}^{\left( 2 \right)} \right)$. We thus may obtain a triple $\left( {{V}_{2}},{{\beta }_{2}},{{\gamma }_{2}} \right)$, with
\[{{V}_{2}}=L_{2}\oplus L_{2},\]
for ${{L}_{2}}\to {{X}_{2}}$ a line bundle with $L_{2}^{2}\cong {{K}_{{{X}_{2}}}}$ that comes equipped with a parabolic structure defined by a trivial flag ${{\left( {{L}_{2}} \right)}_{{{y}_{i}}}}\supset \left\{ 0 \right\}$ and weight $\frac{1}{2}$ for every ${{y}_{i}}\in D_{2}$.\\
Moreover, ${{V}_{2}}$ can be expressed as ${{V}_{2}}={{N}_{2}}\oplus N_{2}^{\vee} \otimes K_{X_{2}} \otimes \iota_{2}$, for ${{\iota }_{2}}\cong {{\mathsf{\mathcal{O}}}_{{{X}_{2}}}}\left( {{D}_{2}} \right)$ . Indeed, for ${{N}_{2}}=L_{2}$ we see that
	\[N_{2}^{\vee} \otimes K_{X_{2}} \otimes \iota_{2}=L_{2}^{-1}\otimes {{\iota_{2} }^{*}} \otimes K_{X_{2}} \otimes \iota_{2} =L_{2}.\]
It can be checked that $\left( {{V}_{2}},{{\beta }_{2}},{{\gamma }_{2}} \right)$ is a parabolic \emph{stable} $\text{Sp(4}\text{,}\mathbb{R}\text{)}$-Higgs bundle.
Also notice that \[par\deg {{V}_{2}}=2par\deg L=2\left( g-1+\frac{s}{2} \right)=2g-2+s.\]
Therefore, the triple $\left( {{V}_{2}},{{\beta }_{2}},{{\gamma }_{2}} \right)$ defines a \emph{model maximal parabolic $\mathrm{Sp(4}\text{,}\mathbb{R}\text{)}$-Higgs bundle}.
\end{example}

An application of Proposition \ref{additivity_pardeg} now provides directly  that the polystable hybrid $\text{Sp(4}\text{,}\mathbb{R}\text{)}$-Higgs bundle constructed, $\left( {{V}_{\#}},\Phi_{\#}, {{h}_{\#}} \right)$, is \emph{maximal}:

\begin{proposition}
The hybrid Higgs bundle $\left( {{V}_{\#}},\Phi_{\#} , {{h}_{\#}} \right)$ constructed by gluing the maximal parabolic Higgs bundles defined by the triples $\left( {{V}_{1}},{{\beta }_{1}},{{\gamma }_{1}} \right)$ and $\left( {{V}_{2}},{{\beta }_{2}},{{\gamma }_{2}} \right)$ described above is maximal, that is, $\deg \left( {{V}_{\#}} \right)=2\left( {{g}_{1}}+{{g}_{2}}+s-1 \right)-2=2g-2$, where $g$ is the genus of the Riemann surface ${{X}_{\#}}$, the connected sum of the $s$-punctured Riemann surfaces $X_{1}$ and $X_{2}$.
\end{proposition}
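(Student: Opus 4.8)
The plan is to reduce the statement to a single numerical identity and then obtain that identity by feeding the parabolic degree computations of Examples 8.2 and 8.3 into the additivity property of Proposition 8.1. First I would recall that by Definition 2.7 a non-parabolic $\text{Sp(4},\mathbb{R}\text{)}$-Higgs bundle over a \emph{closed} surface of genus $g$ is maximal precisely when its Toledo invariant satisfies $\deg(V_{\#})=2g-2$. Since Theorem 7.4 already guarantees that $\left( V_{\#},\Phi_{\#},h_{\#},\bar\partial\right)$ is a genuine polystable (non-parabolic) $\text{Sp(4},\mathbb{R}\text{)}$-Higgs bundle over the closed surface $X_{\#}$, the whole claim collapses to the computation of the single integer $\deg(V_{\#})$.

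Next I would apply Proposition 8.1 to $V_{\#}=V_{1}\#V_{2}$. Taking the trivial reduction $\sigma$ to $P=H^{\mathbb{C}}=\text{GL}\left( 2,\mathbb{C} \right)$ and the antidominant character $\chi$ that recovers the degree of the associated rank-two bundle, the left-hand side $\deg\left( V_{1}\#V_{2}\right)\left(\sigma,\chi\right)$ becomes the ordinary degree $\deg(V_{\#})$, i.e. the Toledo invariant, while the right-hand side is the sum of the parabolic degrees of the two ingredients. This yields
\[\deg(V_{\#})=par\deg_{\alpha_{1}}(V_{1})+par\deg_{\alpha_{2}}(V_{2}).\]
Adapting the computations of Examples 8.2 and 8.3 to the respective genera $g_{1}$ and $g_{2}$ (the building blocks are maximal parabolic $\text{Sp(4},\mathbb{R}\text{)}$-Higgs bundles by construction), one has $par\deg V_{1}=2g_{1}-2+s$ and $par\deg V_{2}=2g_{2}-2+s$, whence
\[\deg(V_{\#})=\left(2g_{1}-2+s\right)+\left(2g_{2}-2+s\right)=2\left(g_{1}+g_{2}+s-1\right)-2.\]
Finally, invoking the genus formula for the complex connected sum established in \S4.5, namely $g=g_{1}+g_{2}+s-1$, the right-hand side equals $2g-2$, which is exactly the maximality bound, completing the argument.

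The computation is routine once Proposition 8.1 is available, so the only genuine obstacle is the bookkeeping needed to make that proposition apply verbatim. Specifically, I would have to verify that the glued Higgs bundle produced analytically in Theorem 7.4 is indeed isomorphic, as a smooth bundle with hermitian metric, to the bundle-theoretic connected sum $V_{1}\#V_{2}$ of \S4.6 to which Proposition 8.1 refers; and that the parabolic weights $\tfrac12$ at each puncture used in Examples 8.2--8.3 are precisely the weights $\alpha_{1},\alpha_{2}$ entering the additivity formula, with the chosen pair $(\sigma,\chi)$ admissible on both $X_{1}$ and $X_{2}$ simultaneously. Once this identification of the gluing data is in place, no further analysis is required, since the transcendental content---polystability of $V_{\#}$---has already been supplied.
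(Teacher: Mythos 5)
Your proposal is correct and follows essentially the same route as the paper: the paper derives this proposition directly ``in light of Proposition 8.1,'' i.e.\ by combining the additivity of (parabolic) degree under the connected sum with the computations $par\deg V_{i}=2g_{i}-2+s$ from Examples 8.2 and 8.3, exactly as you do. Your closing remarks about identifying the analytically glued bundle with the bundle-theoretic connected sum and matching the weight data are sensible bookkeeping points that the paper treats as implicit in the construction of \S\S 4--7.
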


\subsection{Model Higgs bundles in the exceptional components of ${{\mathsf{\mathcal{M}}}^{\max }}$}
We may now describe model $\text{Sp(4}\text{,}\mathbb{R}\text{)}$-Higgs bundles that exhaust the exceptional components of ${{\mathsf{\mathcal{M}}}^{\max }}$.  Take the parabolic bundle $V_{1}$ from Example \ref{example_Hitchin} and fix a square root $M_{1}$ of the canonical line bundle ${{K}_{{{{X}}_{1}}}}$. Now, define:
\begin{align*}
{{W}_{1}} & := V_{1}^{*}\otimes {{M}_{1}}={{\left[ \left( L_{1}^{3}\otimes \iota  \right)\oplus {{\left( {{L}_{1}}\otimes \iota  \right)}^{*}} \right]}^{*}}\otimes {{M}_{1}} \\
& \cong \left[ \left( {{L}_{1}}\otimes \iota  \right)\oplus \left( L_{1}^{-3}\otimes {{\iota }^{*}} \right) \right]\otimes {{M}_{1}} \cong \left( {{L}_{1}}\otimes {{M}_{1}}\otimes \iota  \right)\oplus {{\left( {{L}_{1}}\otimes {{M}_{1}}\otimes \iota  \right)}^{*}},
\end{align*}
in other words, $W_{1}$ is of the form $\mathsf{\mathcal{L}}\oplus {{\mathsf{\mathcal{L}}}^{*}}$ for $\mathsf{\mathcal{L}}:= \left( {{L}_{1}}\otimes {{M}_{1}}\otimes \iota  \right)$ and also the map $\gamma_{1} \otimes {{I}_{M_{1}^{*}}}:W_{1}^{*}\to {{W}_{1}}$ is an isomorphism, which comes from the fact that $\gamma_{1}$ is; this follows from the proof of the Milnor-Wood inequality in the parabolic case.\\
Similarly, take the parabolic bundle $V_{2}$ from Example \ref{example_diagonal} and fix a square root $M_{2}$ of the canonical line bundle ${{K}_{{{{X}}_{2}}}}$. Define:
\begin{align*}
{{W}_{2}} & :=V_{2}^{*}\otimes {{M}_{2}} = {{\left( {{L}_{2}}\oplus L_{2}^{*}{{K}_{{{X}_{2}}}} \right)}^{*}}\otimes {{M}_{2}}\simeq \left( {{L}_{2}}M_{2}^{-2}\oplus L_{2}^{*} \right)\otimes {{M}_{2}}\\
& \simeq \left( {{L}_{2}}\otimes M_{2}^{-1} \right)\oplus {{\left( {{L}_{2}}\otimes M_{2}^{-1} \right)}^{*}},
\end{align*}
in other words, $W_{2}$ is of the form $\mathsf{\mathcal{L}}\oplus {{\mathsf{\mathcal{L}}}^{*}}$ for $\mathsf{\mathcal{L}}:\cong {{L}_{2}}\otimes M_{2}^{-1}$.\\
In fact, it is possible to choose appropriate square roots of the canonical bundles allowing to identify the restrictions of $W_{1}$ and $W_{2}$ to the annuli, thus the same square root can be used on both sides of the connected sum operation.\\
A notion of Stiefel-Whitney class $w_{1}$ as an appropriate topological invariant for parabolic  $\text{Sp(4}\text{,}\mathbb{R}\text{)}$-Higgs bundles was defined in \cite{KSZ} in terms of the corresponding Higgs $V$-bundles. The Cayley partners $W_{1}$, $W_{2}$ of the parabolic bundles $V_{1}$, $V_{2}$ respectively have vanishing $w_{1}$. From the way the connected sum operation is carried out and the fact that $w_{1}$ only depends on the restriction of the bundles to the 1-skeleton of the underlying Riemann surfaces, the first Stiefel-Whitney class $w_{1} \left( W_{\#} \right) $ for the Cayley partner $W_{\#}$ of the connected sum bundle $V_{\#}$ will also vanish. Therefore, there is a decomposition 
\[{{V}_{\#}}={{N}_{\#}}\oplus N_{\#}^{*} \otimes {{K}_{{{X}_{\#}}}},\]
with ${{N}_{\#}}={{N}_{1}}\#{{N}_{2}}$. Moreover, this provides that the Cayley partner ${{W}_{\#}}$ of ${{V}_{\#}}$ decomposes as ${{W}_{\#}}={{L}_{\#}}\oplus L_{\#}^{-1}$ for some line bundle ${{L}_{\#}}$. We thus have established the following:

\begin{proposition}
The hybrid Higgs bundle $\left( {{V}_{\#}},{{\Phi }_{\#}} \right)$ constructed by gluing the maximal parabolic Higgs bundles $\left( {{V}_{1}},{{\beta }_{1}},{{\gamma }_{1}} \right)$ and $\left( {{V}_{2}},{{\beta }_{2}},{{\gamma }_{2}} \right)$ of \S \ref{Examples_of_maximal} is maximal with a corresponding Cayley partner ${{W}_{\#}}$ for which ${{w}_{1}}\left( {{W}_{\#}} \right)=0$ and ${{W}_{\#}}={{L}_{\#}}\oplus L_{\#}^{-1}$, for some line bundle ${{L}_{\#}}$ over ${{X}_{\#}}$.
\end{proposition}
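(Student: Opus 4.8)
The plan is to assemble the statement from the ingredients already developed in this section, treating the three assertions---maximality, $w_1(W_\#)=0$, and the splitting of $W_\#$---in turn. Maximality is essentially immediate: by Examples 8.2 and 8.3 each $(V_i,\beta_i,\gamma_i)$ is a maximal parabolic $\text{Sp}(4,\mathbb{R})$-Higgs bundle, so the additivity of the parabolic degree under the complex connected sum (Proposition 8.1) gives $\deg(V_\#)=par\deg(V_1)+par\deg(V_2)=(2g_1-2+s)+(2g_2-2+s)=2g-2$ for $g=g_1+g_2+s-1$; this is precisely Proposition 8.4. Since $V_\#$ is then a maximal (non-parabolic) $\text{Sp}(4,\mathbb{R})$-Higgs bundle over the closed surface $X_\#$, the argument recalled in \S2.6 shows $\gamma_\#$ is an isomorphism, so the Cayley partner $W_\#=V_\#^{*}\otimes M_\#$ with $q_{W_\#}=\gamma_\#\otimes I_{M_\#^{-1}}$ is well-defined, where $M_\#=M_1\#M_2$ is the square root of $K_{X_\#}=K_{X_1}\#K_{X_2}$ obtained by gluing the chosen square roots $M_i$.

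For the vanishing of $w_1$, I would track the structure group of $W_\#$ through the gluing. On each side, $W_i=V_i^{*}\otimes M_i$ splits as $\mathcal{L}_i\oplus\mathcal{L}_i^{*}$ and is therefore determined by an $\text{O}(2,\mathbb{C})$-valued cocycle $\{w^i_{\alpha\beta}\}$ with $\det\{w^i_{\alpha\beta}\}=1$. Over the neck, after equipping $\mathbb{W}_i$ with the induced hermitian metric and running Gram--Schmidt, the transition datum $\tilde{w}_i$ is $\text{SO}(2)$-valued, and the gluing isomorphism $\mathbb{W}_1|_{\Omega_1}\xrightarrow{\simeq}\mathbb{W}_2|_{\Omega_2}$---induced by the isomorphisms between the $V_i$ and the $M_i$---respects this reduction. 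Patching the determinant-one data on $X_1^{*}$, on $X_2^{*}$, and over $\Omega$ yields a global cocycle $\{w^\#_{\alpha\beta}\}$ for $W_\#$ with $\det\{w^\#_{\alpha\beta}\}=1$; that is, the structure group reduces from $\text{O}(2,\mathbb{C})$ to $\text{SO}(2,\mathbb{C})$. As $w_1(W_\#)\in H^1(X_\#,\mathbb{Z}/2)$ is exactly the obstruction to such a reduction, we obtain $w_1(W_\#)=0$.

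The splitting $W_\#=L_\#\oplus L_\#^{-1}$ is then immediate from Mumford's classification of rank-two orthogonal bundles recalled in \S2.7: any $\text{O}(2,\mathbb{C})$-bundle with vanishing first Stiefel--Whitney class is of the form $L\oplus L^{-1}$ with off-diagonal orthogonal form. Dually this records $V_\#=N_\#\oplus N_\#^{*} K_{X_\#}$ with $N_\#=N_1\#N_2$, completing the statement.

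The step I expect to require the most care is the compatibility of the gluing over the annulus $\Omega$. One must check that the single isomorphism $\mathbb{W}_1|_{\Omega_1}\cong\mathbb{W}_2|_{\Omega_2}$ used to form $W_\#$ genuinely respects both $\text{SO}(2)$-reductions simultaneously and is induced by one well-defined square root $M_\#$ of $K_{X_\#}$ across the neck, identified via the biholomorphism $f_\lambda$ of \S4.5. Concretely, the orientations of $\mathcal{L}_1\oplus\mathcal{L}_1^{*}$ and $\mathcal{L}_2\oplus\mathcal{L}_2^{*}$ must match where the annuli are glued, so that the determinant-one condition holds continuously rather than only on each closed piece separately; once this matching is verified, the determinant-one cocycle for $W_\#$, and hence $w_1(W_\#)=0$, follows formally.
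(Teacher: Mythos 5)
Your proposal is correct and follows essentially the same route as the paper: maximality via the additivity of parabolic degree (Propositions 8.1 and 8.4), vanishing of $w_{1}\left( {{W}_{\#}} \right)$ by exhibiting determinant-one $\text{O}\left( 2 \right)$-cocycles for each ${{W}_{i}}=V_{i}^{*}\otimes {{M}_{i}}$, passing to $\text{SO}\left( 2 \right)$-valued transition data over the neck via Gram--Schmidt, and patching through the gluing isomorphism induced by the identifications of the ${{V}_{i}}$ and ${{M}_{i}}$ (using the cocycles defining ${{K}_{{{X}_{\#}}}}$), and finally the splitting ${{W}_{\#}}={{L}_{\#}}\oplus L_{\#}^{-1}$ from Mumford's classification recalled in \S 2.7. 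The compatibility point you flag at the end is exactly the step the paper also treats (briefly), via the isomorphism ${{\mathbb{W}}_{1}}\left| _{{{\Omega }_{1}}} \right.\xrightarrow{\simeq }{{\mathbb{W}}_{2}}\left| _{{{\Omega }_{2}}} \right.$ subordinate to the covering of $\Omega$, so your account matches the paper's proof in both substance and level of detail.
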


\begin{remark}
Compare this result to Proposition 5.9 in \cite{GW}, where an analogous property for the Stiefel-Whitney classes of a hybrid representation was established.
\end{remark}

The degree of this line bundle ${{L}_{\#}}$ fully determines the connected component in which a hybrid Higgs bundle will lie:

\begin{proposition} For the line bundle ${{L}_{\#}}$ appearing in the decomposition ${{W}_{\#}}={{L}_{\#}}\oplus L_{\#}^{-1}$ of the Cayley partner, it is
	\[\deg \left( {{L}_{\#}} \right)=par\deg {{K}_{{{{X}}_{1}}}}\otimes {{\iota }_{1}},\]
where ${{\iota }_{1}}={{\mathsf{\mathcal{O}}}_{{{{X}}_{1}}}}\left( {{D}_{1}} \right)$.
\end{proposition}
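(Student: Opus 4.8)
The plan is to compute $\deg(L_\#)$ not by gluing the line bundles $\mathcal{L}_i$ of Examples 8.3--8.4 directly, but by passing through the ordinary (non-parabolic) Cayley correspondence of \S2.7 on the closed surface $X_\#$ and then invoking the additivity formula of Proposition 8.1. First I would use maximality: by Proposition 8.5 the hybrid bundle $(V_\#,\Phi_\#)$ is a genuine maximal $\mathrm{Sp}(4,\mathbb{R})$-Higgs bundle, so the construction of \S2.7 applies verbatim. Using the decomposition $V_\#=N_\#\oplus N_\#^{*}K_{X_\#}$ with $N_\#=N_1\#N_2$ recorded just before Proposition 8.6, and fixing a square root $M_\#$ of $K_{X_\#}$, the Cayley partner is
\[
W_\#=V_\#^{*}\otimes M_\#=\bigl(N_\#\otimes M_\#^{-1}\bigr)\oplus\bigl(N_\#\otimes M_\#^{-1}\bigr)^{*},
\]
so that $L_\#=N_\#\otimes M_\#^{-1}$ (the summand of non-negative degree) and, since $X_\#$ has genus $g=g_1+g_2+s-1$,
\[
\deg L_\#=\deg N_\#-\deg M_\#=\deg N_\#-(g_1+g_2+s-2).
\]
Thus the whole problem reduces to computing $\deg N_\#$.

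Second, I would apply Proposition 8.1 to the parabolic line bundle $N_\#=N_1\#N_2$ (taking $H^{\mathbb{C}}=\mathbb{C}^{*}$ with the trivial reduction and identity character), which gives $\deg N_\#=\mathrm{par}\deg_{\alpha_1}N_1+\mathrm{par}\deg_{\alpha_2}N_2$. Substituting $N_1=L_1^{3}\otimes\iota_1$ and $N_2=L_2$ from Examples 8.3--8.4, both carrying weight $\tfrac12$ at the $s$ punctures, I would obtain
\[
\mathrm{par}\deg N_1=(3g_1-3+s)+\tfrac{s}{2},\qquad \mathrm{par}\deg N_2=(g_2-1)+\tfrac{s}{2},
\]
so that $\deg N_\#=3g_1+g_2-4+2s$. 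Feeding this into the previous display yields
\[
\deg L_\#=(3g_1+g_2-4+2s)-(g_1+g_2+s-2)=2g_1-2+s,
\]
and since $\deg(K_{X_1}\otimes\iota_1)=(2g_1-2)+s$, this is precisely $\mathrm{par}\deg(K_{X_1}\otimes\iota_1)$, as asserted. As an independent check I would note that $2g_1-2+s=\mathrm{par}\deg V_1=-\chi(X_1\setminus D_1)$, matching both the maximal parabolic Toledo invariant of the left building block and the Euler number of a hybrid representation.

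The step I expect to require the most care is the bookkeeping of the parabolic weights, and that is where I would concentrate the write-up. At first glance the two weight contributions $\tfrac{s}{2}+\tfrac{s}{2}=s$ coming from $N_1$ and $N_2$ do not visibly cancel, yet $\deg L_\#$ is a genuine integer; indeed a naive weight-$\tfrac12$ reading of $K_{X_1}\otimes\iota_1$ would produce the non-integral $2g_1-2+\tfrac{3s}{2}$, which cannot equal a degree, so the correct interpretation of the right-hand side is forced. The resolution — the main obstacle to phrase correctly — is that $\deg M_\#=g-1=(g_1-1)+(g_2-1)+s$ exceeds $\deg M_1+\deg M_2$ by exactly $s$, the genus increase under the connected sum; this surplus $s$ absorbs the accumulated weight terms and restores integrality. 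A secondary point to verify is the legitimacy of applying Proposition 8.1 to $N_\#$ itself rather than to the principal bundles $V_i$: one must confirm that the identification realizing $N_\#=N_1\#N_2$ is the same neck gluing used for $V_\#$, so that the boundary-integral argument proving Proposition 8.1 carries over unchanged to the sub-line-bundle and the local terms again converge to the parabolic corrections.
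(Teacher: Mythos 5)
Your proposal is correct and follows essentially the same route as the paper's own proof: both compute $\deg(N_{\#})$ by applying the additivity formula of Proposition 8.1 to $N_{1}=L_{1}^{3}\otimes\iota_{1}$ and $N_{2}=L_{2}$ (each with weight $\tfrac{1}{2}$ at the $s$ punctures), then subtract the degree $g-1$ of a square root of $K_{X_{\#}}$ to obtain $\deg(L_{\#})=2g_{1}-2+s=-\chi(\Sigma_{1})=\mathrm{par}\deg\left(K_{X_{1}}\otimes\iota_{1}\right)$. The only difference is presentational: you spell out the identification $L_{\#}=N_{\#}\otimes M_{\#}^{-1}$ coming from the Cayley construction of \S 2.7, which the paper uses implicitly when it passes to $N_{\#}\otimes L_{0}^{-1}$.
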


\begin{proof}
The identity of Proposition \ref{additivity_pardeg} applies to provide the computation of the degree for the bundle ${{N}_{\#}}$ appearing in the decomposition ${{V}_{\#}}={{N}_{\#}}\oplus N_{\#}^{*}{{K}_{{{X}_{\#}}}}$:
\begin{align*}
   \deg \left( {{N}_{\#}} \right) & = par\deg \left( L_{1}^{3}\otimes {{\iota }_{1}} \right)+par\deg \left( {{L}_{2}} \right)\\
   & = 3\left( {{g}_{1}}-1 \right)+s+\frac{s}{2}+{{g}_{2}}-1+\frac{s}{2}\\
   & = g+2{{g}_{1}}-3+s,
\end{align*}
where $g:={{g}_{1}}+{{g}_{2}}+s-1$ is the genus of ${{X}_{\#}}$.\\
Considering ${{N}_{\#}}\otimes L_{0}^{-1}$ for some square root ${{L}_{0}}:=K_{\#}^{\frac{1}{2}}$ now gives
\begin{align*}
   \deg \left( {{N}_{\#}}\otimes L_{0}^{-1} \right) & = g+2{{g}_{1}}-3+s+1-g\\
   & = 2{{g}_{1}}+s-2\\
   & = -\chi \left( {{\Sigma }_{1}} \right) = par\deg {{K}_{{{{X}}_{1}}}}\otimes {{\iota }_{1}},
\end{align*}
where ${{\iota }_{1}}={{\mathsf{\mathcal{O}}}_{{{{X}}_{1}}}}\left( {{D}_{1}} \right)$.
\end{proof}

\vspace{2mm}
\noindent\textbf{Final overview}.\\
We have constructed a holomorphic vector bundle ${{V}_{\#}}\to {{X}_{\#}}$ with $\deg \left( {{V}_{\#}} \right)=2g-2$ and ${{V}_{\#}}={{N}_{\#}}\oplus N_{\#}^{*}{{K}_{{{X}_{\#}}}}$ with $\deg \left( {{N}_{\#}}\otimes L_{0}^{-1} \right) = 2{{g}_{1}}-2+s$, which is \emph{odd} (respectively \emph{even}) whenever $s$ is odd (resp. even). The contraction mapping argument developed in \S 5-7 provides a holomorphic structure with respect to which $ {{V}_{\#}}$ is a polystable $\text{Sp(}4,\mathbb{R}\text{)}$-Higgs bundle. The numerical information we already have for the topological invariants of ${{V}_{\#}}$ is preserved and it identifies the connected component of the maximal moduli space in which the tuple $\left( {{V}_{\#}},\Phi ,{{h}_{\#}} \right)$ will lie. We thus derive the following conclusions:

\begin{enumerate}
  \item The method described in Sections 4,5 and 6 can be more generally applied for producing model $G$-Higgs bundles for any semisimple Lie group $G$ using appropriate embeddings $\text{SL}\left( 2,\mathbb{C} \right) \hookrightarrow G$. In each particular case for the group $G$, however, the computations isolated in \S \ref{invertibility} need to be explicitly checked for the invertibility of the linearization operator. 
  \item In the case $G = \text{Sp}\left( 4,\mathbb{R} \right)$ in particular, the component in which a hybrid Higgs bundle lies depends on the genera and the number of points in the divisors of the initial Riemann surfaces $X_{1}$ and $X_{2}$ in the construction; there are no extra parameters arising from the deformation of stable parabolic data to model data near these points, or the perturbation argument to correct the approximate solution to an exact solution.\\
Since $1\le {{g}_{1}}\le {{g}_{1}}+{{g}_{2}}-1$, it follows that
	\[s\le \deg \left( {{N}_{\#}}\otimes L_{0}^{-\frac{1}{2}} \right)\le 2g-s-2,\]
with $s$ an integer between 1 and $g-1$. Therefore, the hybrid Higgs bundles constructed are modeling \emph{all} exceptional $2g-3$ connected components of ${{\mathsf{\mathcal{M}}}^{\max }}\left( X,\text{Sp(4}\text{,}\mathbb{R}\text{)} \right)$. These components are fully distinguished by the degree of the line bundle ${{L}_{\#}}$ for the hybrid Higgs bundle constructed by gluing.
  \item The gluing of two parabolic Higgs bundles of the same type as the model $\left( {{V}_{1}},{{\beta }_{1}},{{\gamma }_{1}} \right)$ from Example \ref{example_Hitchin} implies that $\deg \left( {{N}_{\#}} \right)=3g-3$. On the other hand, the gluing of two parabolic Higgs bundles of the same type as $\left( {{V}_{2}},{{\beta }_{2}},{{\gamma }_{2}} \right)$ from Example \ref{example_diagonal} implies that $\deg \left( {{N}_{\#}} \right)=g-1$, as expected.
  \item For a hybrid representation $\rho :{{\pi }_{1}}\left( \Sigma  \right)\to \text{Sp(4}\text{,}\mathbb{R}\text{)}$, O. Guichard and A. Wienhard in \cite{GW} defined an Euler class $e$  with values $e=-\chi \left( {{\Sigma }_{l}} \right)\left[ \Sigma  \right]\in {{H}^{2}}\left( {{T}^{1}}\Sigma ,\mathbb{Z} \right)$, where ${{T}^{1}}\Sigma $ is the unit tangent bundle of the surface $\Sigma ={{\Sigma }_{l}}{{\cup }_{\gamma }}{{\Sigma }_{r}}$ and ${{\Sigma }_{l}}$ is considered to be a surface of genus $1\le {{g}_{l}}\le g-1$ and one boundary component, thus its Euler characteristic $\chi \left( {{\Sigma }_{l}} \right)=2-2{{g}_{l}}-1=1-2{{g}_{l}}$ is an odd integer within $-2g+3$ and $-1$. On the other hand,  a relation between the Stiefel-Whitney classes for maximal $\text{Sp(4}\text{,}\mathbb{R}\text{)}$-Higgs bundles and the Stiefel-Whitney classes for $\text{Sp(4}\text{,}\mathbb{R}\text{)}$-representations, was described in \cite{GW}:  
\begin{proposition}[Proposition 19 in \cite{GW}] Let $\rho :{{\pi }_{1}}\left(\Sigma \right)\to \mathrm{Sp}\left( 2n,\mathbb{R} \right)$ be a maximal fundamental group representation for a closed topological surface $\Sigma$. Then, for any choice of a spin structure $v$, the following formulas in ${{H}^{i}}\left( {{T}^{1}}\Sigma,{{\mathbb{Z}}_{2}} \right)$, $i=1, 2$ hold 
\begin{align*}
   \text{s}{{\text{w}}_{1}}\left( \rho  \right) & = {{w}_{1}}\left( \rho ,v \right)+nv,  \\
   \text{s}{{\text{w}}_{2}}\left( \rho  \right) & ={{w}_{2}}\left( \rho ,v \right)+\text{s}{{\text{w}}_{1}}\left( \rho  \right)\smile v+\left( g-1 \right)\bmod 2,
\end{align*}
where $w_{1}$ and $w_{2}$ are the Higgs bundle invariants (Stiefel-Whitney classes of the corresponding Cayley partner) and $  \text{s}{{\text{w}}_{1}}$, $  \text{s}{{\text{w}}_{2}}$ are the topological invariants of the Anosov representation $\rho$.
\end{proposition}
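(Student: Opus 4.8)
The plan is to realize both families of invariants as Stiefel--Whitney classes of $\text{O}(n)$-reductions of one and the same underlying bundle, and then to measure the discrepancy between the two reductions entirely in terms of the chosen spin structure $v$. On the representation side, maximality forces $\rho$ to be Anosov, so the equivariant limit maps into the Lagrangian Grassmannian of $\text{Sp}(2n,\mathbb{R})$ furnish a continuous reduction of the associated flat bundle over the unit tangent bundle $T^{1}\Sigma$ to the stabilizer of a Lagrangian, and hence, after passage to a maximal compact, to an $\text{O}(n)$-bundle $E_{\rho}\to T^{1}\Sigma$; by definition $\text{sw}_{i}(\rho)=w_{i}(E_{\rho})$. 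On the Higgs side, maximality makes $\gamma$ an isomorphism, so the Cayley partner $(W,q_{W})$ is an $\text{O}(n,\mathbb{C})$-bundle over $\Sigma$ whose harmonic reduction is a genuine $\text{O}(n)$-bundle, and the $w_{i}$ are its Stiefel--Whitney classes.

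First I would establish a bundle-level comparison after pulling the Cayley partner back along $\pi:T^{1}\Sigma\to\Sigma$. Because $W=V^{*}\otimes L_{0}$ with $L_{0}^{2}\cong K$, the square root $L_{0}$ of the canonical bundle is precisely the point at which a spin structure enters the picture. Using the non-abelian Hodge correspondence together with the fact that the Anosov limit data is asymptotic to the harmonic reduction, I would show that $E_{\rho}$ and $\pi^{*}W$ differ by tensoring with a real line bundle $\ell$ whose first Stiefel--Whitney class is the chosen spin class, $w_{1}(\ell)=v$. Proving this identification, and confirming that $\ell$ encodes precisely $v$ rather than some other theta characteristic, is the conceptual core of the argument.

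Granting this, I would apply the Whitney multiplicativity formula for the tensor product of a rank-$n$ real bundle with a line bundle of first class $v$,
\begin{align*}
w_{1}(E_{\rho}) &= w_{1}(\pi^{*}W)+n\,v, \\
w_{2}(E_{\rho}) &= w_{2}(\pi^{*}W)+(n-1)\,w_{1}(\pi^{*}W)\smile v+\tbinom{n}{2}\,v^{2}.
\end{align*}
The first line is the asserted formula for $\text{sw}_{1}$. Substituting $w_{1}(\pi^{*}W)=\text{sw}_{1}(\rho)+n\,v$ into the second line reproduces the cross term $\text{sw}_{1}(\rho)\smile v$ and collects the remaining purely spin-theoretic contributions into a scalar multiple of $v^{2}$.

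The step I expect to be the main obstacle is the final cohomological identification of this residual term with $(g-1)\bmod 2$. For this I would exploit the Gysin sequence of the circle bundle $S^{1}\hookrightarrow T^{1}\Sigma\xrightarrow{\pi}\Sigma$ with Euler number $\chi(\Sigma)=2-2g$: modulo $2$ the Euler class vanishes, yet the spin refinement satisfies a quadratic relation in $H^{2}(T^{1}\Sigma,\mathbb{Z}_{2})$ in which the mod-$2$ Euler number is replaced by $(g-1)$. Pinning down this relation with the correct genus dependence --- equivalently, reconciling the central extension $0\to\mathbb{Z}\to\pi_{1}(T^{1}\Sigma)\to\pi_{1}(\Sigma)\to 0$ with the quadratic form determined by the spin structure --- is the delicate point on which the exact shape of the formula rests. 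Once it is in place, the two displayed identities reduce to the statement, the remainder being a formal manipulation of Whitney classes and of the definitions of the two sets of invariants.
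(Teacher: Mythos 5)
First, a point about the comparison itself: the paper never proves this proposition. It is quoted, with attribution, from Guichard--Wienhard (\cite{GW}, Proposition 19) purely as background for the discussion of invariants in Section 8, so there is no internal proof to measure your argument against; your attempt has to stand on its own. Your general setup is the natural one: realize $\mathrm{sw}_i(\rho)$ as Stiefel--Whitney classes of the Anosov $\mathrm{O}(n)$-reduction over $T^1\Sigma$, realize $w_i(\rho,v)$ as those of the Cayley partner $W=V^{*}\otimes L_0$, use the tautological trivialization of $\pi^{*}T\Sigma$ over $T^1\Sigma$ to turn $\pi^{*}L_0$ into a two-torsion line bundle whose class is the spin class $v$, and compare by the Whitney formula. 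Moreover, the identity you flag as the ``main obstacle'' is true and provable essentially as you suggest: $v\smile v=Sq^{1}v$ is the mod-$2$ reduction of the integral Bockstein $\beta(v)$, and since $H^{1}(T^1\Sigma;\mathbb{Z})=\pi^{*}H^{1}(\Sigma;\mathbb{Z})$ while $v$ is not a pullback, $\beta(v)$ is the unique element of order two, $(g-1)t$, in the torsion subgroup $\mathbb{Z}/(2g-2)\subset H^{2}(T^1\Sigma;\mathbb{Z})$; hence $v\smile v$ equals $(g-1)$ times the pullback of the generator of $H^{2}(\Sigma;\mathbb{Z}_2)$.

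The genuine gap is in your final bookkeeping, and it is not cosmetic. Granting your identification $E_\rho\cong\pi^{*}W\otimes\ell$ with $w_1(\ell)=v$, substituting $w_1(\pi^{*}W)=\mathrm{sw}_1(\rho)+nv$ into your second Whitney formula gives
\begin{align*}
\mathrm{sw}_2(\rho)&=w_2(\rho,v)+(n-1)\,\mathrm{sw}_1(\rho)\smile v+\binom{n}{2}v^{2},
\end{align*}
because the extra term $n(n-1)v^{2}$ vanishes mod $2$. The cross term therefore carries the coefficient $(n-1)$, not $1$, and the residual term carries $\binom{n}{2}$, not $1$; your assertion that the substitution ``reproduces the cross term $\mathrm{sw}_1(\rho)\smile v$'' is not what the algebra yields. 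These coefficients reproduce the stated formula precisely when $n\equiv 2 \pmod 4$ --- so your route is viable for $n=2$, the $\mathrm{Sp}(4,\mathbb{R})$ case this paper actually uses --- but for odd $n$ the cross term drops out entirely, and for $n\equiv 0\pmod 4$ the $(g-1)$ term does, whereas the proposition is asserted for all $n$. Consequently, either your central bundle identification must be modified for general $n$, or additional relations special to maximal representations (constraining $\mathrm{sw}_1(\rho)\smile v$) must be proved and invoked; as written, the proposal cannot prove the statement in the generality claimed. Compounding this, that central identification --- that the dynamically defined Anosov Lagrangian, with the orthogonal structure coming from the harmonic metric, is isomorphic as an $\mathrm{O}(n)$-bundle to $\pi^{*}(W,q_W)$ twisted by exactly the line bundle of class $v$ --- is the step you explicitly leave unproved, and it is where all the non-formal content of the proposition sits. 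As it stands, you have a plausible plan for $n=2$, not a proof of the proposition.
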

In view of this proposition, we furthermore deduce that in the case of Riemann surfaces with $s=1$ point in the divisors, the degree $\deg \left( {{L}_{\#}} \right)$ of the underlying bundle ${{L}_{\#}}$ in the decomposition of the Cayley partner ${{W}_{\#}}={{L}_{\#}}\oplus L_{\#}^{-1}$ of a hybrid $\text{Sp(4}\text{,}\mathbb{R}\text{)}$-Higgs bundle \emph{is equal to} the Euler class $e$ for the hybrid representation. This provides a comparison between the invariants for maximal Higgs bundles and the topological invariants for Anosov representations constructed by O. Guichard and A. Wienhard, although these invariants live naturally in different cohomology groups.
\end{enumerate}

\vspace{2mm}

\bigskip

\noindent\small{\textsc{Max-Planck-Institut f\"{u}r Mathematik} \\
Vivatsgasse 7, 53111 Bonn, Germany}\\
\emph{E-mail address}:  \texttt{kydonakis@mpim-bonn.mpg.de}

\begin{thebibliography}{99}

\bibitem{BiBo}
O. Biquard and P. Boalch, Wild non-abelian Hodge theory on curves. \emph{Compos. Math.} \textbf{140} (2004), no. 1, 179-204.

\bibitem{BiquardGM}
O. Biquard, O.  Garc\'{i}a-Prada and I. Mundet i Riera, Parabolic Higgs bundles and representations of the fundamental group of a punctured surface into a real group.  \emph{Adv. Math.} \textbf{372} (2020), 107305.

\bibitem{BiswasAresGovin}
I. Biswas, P. Ar\'{e}s-Gastesi and S. Govindarajan, Parabolic Higgs bundles and Teichm\"{u}ller spaces for punctured surfaces. \emph{Trans. Amer. Math. Soc.} \textbf{349} (1997), no. 4, 1551-1560.

\bibitem{BoYo}
H. U. Boden and K. Yokogawa, Moduli spaces of parabolic Higgs bundles and $K\left( D \right)$ pairs over smooth curves: I. \emph{Int. J. Math.} \textbf{7} (1996), 573-598.

\bibitem{BGGDeformations}
S. B. Bradlow, O. Garc{\'i}a-Prada and P. B. Gothen, Deformations of maximal representations in $\text{Sp}\left( 4,\mathbb{R} \right)$. \emph{Q. J. Math.} \textbf{63} (2012), no. 4, 795-843.

\bibitem{BIW}
M. Burger, A. Iozzi and A. Wienhard, Surface group representations with maximal Toledo invariant. \emph{Ann. of Math. (2)} \textbf{172} (2010), no. 1, 517-566.

\bibitem{CLM}
S. Cappell, R. Lee and E. Miller, Self-adjoint elliptic operators and manifold decompositions. Part I: Low eigenmodes and stretching. \emph{Comm. Pure Appl. Math.} \textbf{49} (1996), 825-866.

\bibitem{Collier}
B. Collier, Maximal $\text{Sp(}4,\mathbb{R}\text{)}$ surface group representations, minimal immersions and cyclic surfaces. \emph{Geom. Dedicata} \textbf{180} (2016), 241-285.

\bibitem{Corlette}
K. Corlette, Flat $G$-bundles with canonical metrics. \emph{J. Differential Geom.} \textbf{28} (1988), 361-382.

\bibitem{Donaldson}
S. K. Donaldson, Twisted harmonic maps and the self-duality equations. \emph{Proc. London Math. Soc.} (3) \textbf{55} (1987), 127-131.

\bibitem{DonKron}
S. Donaldson and P. Kronheimer, The geometry of four-manifolds. \emph{Oxford Math. Monographs}, Oxford Science Publications, 1990.

\bibitem{Foscolo}
L. Foscolo, A gluing construction for periodic monopoles. \emph{Int. Math. Res. Not. IMRN} (2017), no. 24, 7504-7550.

\bibitem{Lauraetal}
L. Fredrickson, R. Mazzeo, J. Swoboda and H. Weiss, Asymptotic geometry of the moduli space of parabolic $\text{SL} \left(2, \mathbb{C} \right)$-Higgs bundles. arXiv:2001.03682.


\bibitem{GGMsymplectic}
O. Garc{\'i}a-Prada, P. B. Gothen and I. Mundet i Riera, Higgs bundles and surface group representations in the real symplectic group. \emph{J. Topol.} \textbf{6} (2013), no. 1, 64-118.

\bibitem{GGMHitchin-Kob}
O. Garc{\'i}a-Prada, P. B. Gothen and I. Mundet i Riera, The Hitchin-Kobayashi correspondence, Higgs pairs and surface group representations. arXiv:0909.4487.

\bibitem{GGmunoz}
O. Garc\'{i}a-Prada, P. B. Gothen and V. Mu\~{n}oz, Betti numbers of the moduli space of rank 3 parabolic Higgs bundles. \emph{ Mem. Amer. Math. Soc.} \textbf{187} (2007), no. 879, viii+80 pp.

\bibitem{Goldman}
W. M. Goldman, The symplectic nature of fundamental groups of surfaces. \emph{Adv. Math.} \textbf{54} (1984), no. 2, 200-225.

\bibitem{Gothen}
P. B. Gothen, Components of spaces of representations and stable triples. \emph{Topology} \textbf{40} (2001), no. 4, 823-850.

\bibitem{GW}
O. Guichard and A. Wienhard, Topological invariants of Anosov representations. \emph{J. Topol.} \textbf{3} (2010), no. 3, 578-642.

\bibitem{HaOt}
T. Hartnick and A. Ott, Bounded cohomology, Higgs bundles, and Milnor-Wood inequalities. arXiv: 1105.4323.

\bibitem{He}
S. He, A gluing theorem for the Kapustin-Witten equations with a Nahm pole. \emph{J. Topol.} \textbf{12} (2019), no. 3, 855-915.

\bibitem{Hit92}
N. J. Hitchin, Lie groups and Teichm{\"u}ller space. \emph{Topology} \textbf{31} (1992), no. 3, 449-473.

\bibitem{Hit87}
N. J. Hitchin, The self-duality equations on a Riemann surface. \emph{Proc. London Math. Soc.} (3) \textbf{55} (1987), 59-126.

\bibitem{Konno}
H. Konno, Construction of the moduli space of stable parabolic Higgs bundles on a Riemann surface. \emph{J. Math. Soc. Japan} \textbf{45} (1993), no. 2, 253-276.

\bibitem{Kydonakis} G. Kydonakis, Gluing constructions for Higgs bundles over a complex connected sum. Ph.D. thesis, University of Illinois at Urbana-Champaign (2018).

\bibitem{KSZ}
G. Kydonakis, H. Sun and L. Zhao, Topological invariants of parabolic $G$-Higgs bundles. \emph{Math. Z.} \textbf{297} (2021), no. 1-2, 585-632.

\bibitem{KSZ2}
G. Kydonakis, H. Sun and L. Zhao, The Beauville-Narasimhan-Ramanan correspondence for twisted Higgs $V$-bundles and components of parabolic $\mathrm{Sp}(2n,\mathbb{R})$-Higgs moduli spaces. \emph{Trans. Amer. Math. Soc.} \textbf{374} (2021), no. 6, 4023-4057.  

\bibitem{MSWW}
R. Mazzeo, J. Swoboda, H. Weiss and F. Witt, Ends of the moduli space of Higgs bundles. \emph{Duke Math. J.} \textbf{165} (2016), no. 12, 2227-2271.

\bibitem{Mochizuki}
T. Mochizuki, Kobayashi-Hitchin correspondence for tame harmonic bundles and an application. arXiv:math/0411300.

\bibitem{Mumford}
D. Mumford, Theta characteristics of an algebraic curve. \emph{Ann. Sci. {\'E}cole Norm. Sup.} \textbf{4} (1971), no. 4, 181-192.

\bibitem{Nicolaescuarticle}
L. Nicolaescu, On the Cappell-Lee-Miller gluing theorem. \emph{Pac. J. Math.} \textbf{206} (2002), no. 1, 159-185.

\bibitem{Ramanan}
S. Ramanan, Global calculus. \emph{Graduate Texts in Mathematics} \textbf{65}, American Mathematical Society, 2005.

\bibitem{Safari}
P. Safari, A gluing theorem for Seiberg-Witten moduli spaces. Ph. D. Thesis, Columbia University, 2000.

\bibitem{Simpson-variations}
C. T. Simpson, Constructing variations of Hodge structures using Yang-Mills theory and applications to uniformization. \emph{J. Amer. Math. Soc.} \textbf{1} (1988), 867-918.

\bibitem{Simpson-noncompact}
C. T. Simpson, Harmonic bundles on noncompact curves. \emph{J. Amer. Math. Soc.} \textbf{3} (1990), no. 3, 713-770.

\bibitem{Simpson-Higgs}
C. T. Simpson, Higgs bundles and local systems. \emph{Inst. Hautes {\'E}tudes Sci. Publ. Math.} \textbf{75} (1992), 5-95.

\bibitem{Swoboda}
J. Swoboda, Moduli spaces of Higgs bundles on degenerating Riemann surfaces. \emph{Adv. Math.} \textbf{322} (2017), 637-681.

\bibitem{Taubes}
C. H. Taubes, Self-dual Yang-Mills connections over non self-dual 4-manifolds. \emph{J. Differential Geom.} \textbf{17} (1982), 139-170.

\bibitem{Turaev}
V. G. Turaev, A cocycle of the symplectic first Chern class and Maslov indices. (Russian) \emph{Funktsional. Anal. i Prilozhen.} \textbf{18} (1984), no. 1, 43-48.

\bibitem{Yoshida}
T. Yoshida, Floer homology and splittings of manifolds. \emph{Ann. of Math.} \textbf{134} (1991), 277-324.
\end{thebibliography}
\end{document}